\numberwithin{equation}{section}
\theoremstyle{plain}
\newtheorem{thm}{Theorem}[section]
\newtheorem{prop}[thm]{Proposition}
\newtheorem{lem}[thm]{Lemma}
\newtheorem{conj}[thm]{Conjecture}
\theoremstyle{definition}
\newtheorem{de}[thm]{Definition}
\newtheorem{rem}[thm]{Remark}
\newtheorem{set}[thm]{Setting}
\newcommand{\bb}{\mathbb}
\newcommand{\cal}{\mathcal}
\DeclareMathOperator{\Hom}{Hom}
\DeclareMathOperator{\ad}{ad}
\DeclareMathOperator{\pr}{pr}
\DeclareMathOperator{\Ann}{Ann}
\title{Discrete branching laws for minimal holomorphic representations}
\author{Jan M\"ollers \and Yoshiki Oshima}
\address{Institut for Matematiske Fag, Aarhus Universitet, Ny Munkegade 118, 8000 Aarhus C, Denmark}
\email{moellers@imf.au.dk}
\address{School of Mathematics, Institute for Advanced Study, Einstein Drive, Princeton, NJ 08540, USA}
\email{yoshiki.oshima@ipmu.jp}
\begin{document}

\maketitle

\begin{abstract}
We find the explicit branching laws for the restriction of minimal holomorphic representations to symmetric subgroups in the case where the restriction is discretely decomposable. For holomorphic pairs the minimal holomorphic representation decomposes into a direct sum of lowest weight representations which is made explicit. For non-holomorphic pairs the restriction is shown to be irreducible and identified with a known representation.\\
We further study a conjecture by Kobayashi on the behaviour of associated varieties under restriction and confirm this conjecture in the setting of this paper.
\end{abstract}

\section{Introduction}\label{sec:intro}

Let $G$ be a connected, simply connected real reductive Lie group with Lie algebra $\frak{g}$.
Let $\theta$ be a Cartan involution of $G$.
Write $K=G^\theta$ for the subgroup of $\theta$-fixed elements so that $K/(K \cap Z)$ is a compact group, $Z$ denoting the center of $G$.
Write $\frak{g}=\frak{k}+\frak{p}$ for the corresponding Cartan decomposition.

\begin{de}
The real reductive Lie algebra $\frak{g}=\frak{k}+\frak{p}$
is called of {\it Hermitian type} 
and the symmetric pair $(\frak{g},\frak{k})$ is a 
{\it Hermitian symmetric pair}
if there exists an element $z\in\frak{k}$ such that 
 $\ad(z)=0$ on $\frak{k}$ and $\ad(z)^2=-1$ on $\frak{p}$.
\end{de}

Suppose that $\frak{g}$ is of Hermitian type. 
Put $z':=-\sqrt{-1}z\in\frak{k}_\bb{C}$ and
 let $\frak{p}_+$ (resp.\ $\frak{p}_-$) be the eigenspace
 in the complexification $\frak{p}_\bb{C}$ of
 $\ad(z')$ to the eigenvalue $1$ (resp.\ $-1$).
Then we get a decomposition $\frak{p}_\bb{C}=\frak{p}_+ + \frak{p}_-$.
The element $z$ endows the Riemannian symmetric space $G/K$ with a complex structure by choosing $\frak{p}_+$ as the holomorphic tangent space at the base point. 

\begin{de}
Let $\frak{g}$ be a reductive Lie algebra of Hermitian type as above.
An irreducible $(\frak{g},K)$-module $V$ is called
 a {\it highest weight module} if $V^{\frak{p}_+}=\Ann_{\frak{p}_+}(V)\neq 0$, namely,
 there exists a non-zero vector $v\in V$ that is annihilated by
 $\frak{p}_+$.
Similarly, $V$ is called a {\it lowest weight module}
 if $V^{\frak{p}_-}\neq 0$.
\end{de}

We note that highest (resp.\ lowest) weight modules belong to the category ${\cal O}_{\frak{q}}$ for the parabolic subalgebra $\frak{q}=\frak{k}_\bb{C}+\frak{p}_+$ (resp.\ $\frak{q}=\frak{k}_\bb{C}+\frak{p}_-$).
Let $\frak{t}$ be a Cartan subalgebra of $\frak{k}$
 and choose a positive system $\Delta^+(\frak{k}_\bb{C},\frak{t}_\bb{C})$ of $\Delta(\frak{k}_\bb{C},\frak{t}_\bb{C})$.
For a dominant integral weight $\lambda\in\frak{t}_\bb{C}^*$
 we denote the irreducible representation of $K$
 with highest weight $\lambda$ by $F^{\frak{k}}(\lambda)$.
We let $\frak{p}_-$ act as zero on $F^{\frak{k}}(\lambda)$ and put
\begin{align*}
N^{\frak{g}}(\lambda)=U(\frak{g}_\bb{C})\otimes_{U(\frak{k}_\bb{C}+\frak{p}_-)}
 F^{\frak{k}}(\lambda).
\end{align*}
The $(\frak{g},K)$-module $N^{\frak{g}}(\lambda)$ has a unique 
 irreducible quotient $L^{\frak{g}}(\lambda)$.
Then $L^{\frak{g}}(\lambda)$ is a lowest weight $(\frak{g},K)$-module and
 all the irreducible lowest weight $(\frak{g},K)$-modules arise in this way.

A $(\frak{g},K)$-module is said to be unitarizable if it admits a Hermitian
 inner product with respect to which
 $\frak{g}$ acts by skew-Hermitian operators.
The unitarizable highest (or lowest) weight $(\frak{g},K)$-modules were independently
 classified by Enright--Howe--Wallach~\cite{EHW} and Jakobsen~\cite{Jak}.
Suppose that $\frak{g}$ is simple of Hermitian type and
 let $\zeta\in\sqrt{-1}\frak{t}^*$ be a weight such that
 $\zeta(z')>0$ and $\zeta([\frak{k},\frak{k}]\cap\frak{t})=0$.
Take a weight $\lambda_0\in\sqrt{-1}\frak{t}^*$ which is integral dominant for
 $\frak{k}$.
Then there exist numbers $a\in\bb{R}$, $c\in\bb{R}_{>0}$,
 $r\in \bb{Z}_{>0}$ such that
 $L^{\frak{g}}(\lambda_0+x\zeta)$ for $x\in\bb{R}$ is unitarizable if and only if 
 $x=a, a-c,\dots, a-(r-1)c$ or $x$ lies in the half-line $(a,\infty)$.
If $\lambda_0=0$, then $a-(r-1)c=0$ and $r$ equals the 
 real rank of $\frak{g}$.
Therefore, $L^{\frak{g}}(x\zeta)$ for $x\in\bb{R}$ is unitarizable if and only if 
 $x=0,c,\dots,(r-1)c$, or $x\in((r-1)c,\infty)$.

\begin{de}
Let $\frak{g}$ be a simple Lie algebra of Hermitian type. When the real rank of $\frak{g}$ is greater than $1$, we call $L^{\frak{g}}(c\zeta)$ the {\it minimal holomorphic representation}.
\end{de}

If $\frak{g}$ is not of type $A$, the minimal holomorphic representation is a minimal representation of $G$, namely, the annihilator ideal in $U(\frak{g}_\bb{C})$ is the Joseph ideal. In particular, it attains the smallest Gelfand--Kirillov dimension (this is also true for $\frak{g}$ of type $A$). 
We note that for $\frak{g}=\frak{sp}(n,\bb{R})$, the minimal holomorphic representation is isomorphic to the even part of the so-called metaplectic representation (also referred to as the oscillator representation or the Segal--Shale--Weil representation).

The argument in the proof of \cite[II, Theorem 5.10]{Wal79}
 gives the $K$-type decomposition of $L^{\frak{g}}(c\zeta)$:
\begin{align}
\label{eq:Ktype}
L^{\frak{g}}(c\zeta)|_{\frak{k}}\simeq
 \bigoplus_{k=0}^\infty F^{\frak{k}}(c\zeta+k\beta),
\end{align}
where $\beta$ is the highest root in $\frak{p}_+$.

\medskip

The restriction of the minimal holomorphic representation to non-compact subgroups has been studied before in some cases. The restriction of the metaplectic representation to a dual pair of subgroups is particularly well-studied in connection with Howe's correspondence (see e.g. \cite{KaVe} and references therein). For several other settings, explicit branching laws were obtained in both the discretely decomposable case and the non-discretely decomposable case (see \cite{BZ94,MS,PZ,Sek13,Sep07i,Sep07ii,Zha01}).

Our aim in this article is to give the explicit branching laws of minimal holomorphic representations for symmetric pairs when the restriction is discretely decomposable.  We take an involution $\sigma$ on $G$ which commutes with $\theta$ and consider the symmetric pair $(G,G^\sigma)$, where $G^\sigma:=\{g\in G:\sigma(g)=g\}$.

\begin{de}\label{de:holpair}
Suppose that $\frak{g}$ is a reductive Lie algebra of Hermitian type. 
We say a symmetric pair $(\frak{g},\frak{g}^\sigma)$ is of {\it holomorphic type} if $\sigma z=z$, or equivalently if $\sigma$ induces a holomorphic involution  on the Hermitian symmetric space $G/K$.
\end{de}

If $(\frak{g},\frak{g}^\sigma)$ is of holomorphic type,
 $\frak{g}^\sigma$ is of Hermitian type with the same element $z$, and the natural
 embedding $G^\sigma/K^\sigma \to G/K$ is a holomorphic map.

A systematic study of discretely decomposable restrictions was initiated by Kobayashi~\cite{Kob94, Kob98i, Kob98ii}.  For an irreducible $(\frak{g},K)$-module $V$, the restriction $V|_{\frak{g}^\sigma}$ is said to be {\it discretely decomposable} if it is a sum of $\frak{g}^\sigma$-modules of finite length.  When $V$ is unitarizable, $V|_{\frak{g}^\sigma}$ is discretely decomposable if and only if it is isomorphic to a direct sum of irreducible $\frak{g}^\sigma$-modules (see \cite[Lemma 1.3]{Kob98ii}).  In his series of papers \cite{Kob94, Kob98i, Kob98ii}, Kobayashi obtained several criteria for the discrete decomposability. When a $(\frak{g},K)$-module $V$ has the smallest Gelfand--Kirillov dimension, one criterion for the discrete decomposability of $V|_{\frak{g}^\sigma}$ becomes simple (\cite[Theorem 4.10]{KO}) and it was used to obtain a complete list of symmetric pairs $(\frak{g},\frak{g}^\sigma)$ such that $V|_{\frak{g}^\sigma}$ is discretely decomposable, see \cite[Theorem 5.1]{KO}.  Since the minimal holomorphic representations attain the smallest Gelfand--Kirillov dimensions, the list gives all the symmetric pairs with discretely decomposable restrictions $L^{\frak g}(c\zeta)|_{\frak{g}^\sigma}$.  Let us recall the classification of these pairs $(\frak{g},\frak{g}^\sigma)$.

For a holomorphic pair $(\frak{g},\frak{g}^\sigma)$, any highest (or lowest) weight module is known to be discretely decomposable (\cite[Theorem 7.4]{Kob98iii}). For a non-holomorphic pair $(\frak{g},\frak{g}^\sigma)$, the restriction $L^{\frak g}(c\zeta)|_{\frak{g}^\sigma}$ is discretely decomposable if and only if $(\frak{g},\frak{g}^\sigma)$ is one of the following:
\begin{equation*}
\begin{aligned}
 &(\frak{su}(2m,2n),\frak{sp}(m,n)), && (m,n\geq1), &&(\frak{so}(2,n),\frak{so}(1,n)), && (n\geq3),\\
 &(\frak{sp}(2n,\bb{R}),\frak{sp}(n,\bb{C})), && (n\geq1), &&(\frak{e}_{6(-14)},\frak{f}_{4(-20)}).
\end{aligned}
\end{equation*}

\subsection{Holomorphic symmetric pairs} The explicit branching laws of $L^{\frak g}(c\zeta)|_{\frak{g}^\sigma}$ for holomorphic symmetric pairs $(\frak{g},\frak{g}^\sigma)$ with simple $\frak{g}$ are given in Section~\ref{sec:branchhol}. For the proofs we employ the following three methods:
\begin{itemize}
\item use Howe's dual pair correspondence and seesaw pairs (see Section~\ref{subsec:dualpair});
\item compute $K^\sigma$-types and identify irreducible constituents with Zuckerman's derived functor modules $A_{\frak q}(\lambda)$ (see Section~\ref{subsec:aqlambda});
\item use the Fock model as an explicit realization of the minimal holomorphic representation of $\frak{so}(2,n)$ (see Section~\ref{subsec:Fock}).
\end{itemize}
Each holomorphic pair is treated by at least one of these three methods (see Table~\ref{table:proof}).

We remark that for all holomorphic symmetric pairs $(\frak{g},\frak{g}^\sigma)$ the explicit branching law of $L^{\frak{g}}(x\zeta)|_{\frak{g}^\sigma}$ for large $x\gg0$ such that $L^{\frak{g}}(x\zeta)$ is a holomorphic discrete series representation was obtained by Kobayashi~\cite{Kob07}. For smaller values of $x$ the branching law is only known in some special cases, see e.g.\ \cite{Sek13} for the pair $(\frak{g},\frak{g}^\sigma)=(\frak{su}(n,n),\frak{so}^*(2n))$.

The tensor product of two representations can also be regarded as a branching problem for a holomorphic symmetric pair. In \cite{PZ}, the decomposition of the tensor product of two arbitrary highest weight modules of scalar type was computed. This yields in particular the decomposition of $L^{\frak{g}}(c\zeta)\otimes L^{\frak{g}}(c\zeta)$. 

\subsection{Non-holomorphic pairs} If $(\frak{g},\frak{g}^\sigma)$ is a non-holomorphic pair and $L^{\frak g}(c\zeta)|_{\frak{g}^\sigma}$ is discretely decomposable, then we prove in Section~\ref{sec:nonholirred} that the restriction stays irreducible.
In Section~\ref{sec:idnonhol} we further identify the restriction with a known representation, either Zuckerman's derived functor module $A_{\frak q}(\lambda)$, a complementary series representation or a small representation from \cite{HKM}.

\subsection{Kobayashi's conjecture} In Section~\ref{sec:AssVar}, we study a conjecture by Kobayashi on associated varieties in our particular setting (see Conjecture~\ref{conj:Kobayashi}). We prove that the conjecture is true in the following two cases:
\begin{itemize}
\item restriction of a highest (or lowest) weight module with respect to a holomorphic pair (not necessarily symmetric),
\item restriction of the minimal holomorphic representation with respect to non-holomorphic symmetric pairs assuming the discrete decomposability.
\end{itemize}
In particular, the conjecture is true for the branching laws in Sections~\ref{sec:branchhol} and \ref{sec:nonhol} as well as for tensor products of unitarizable highest (or lowest) weight modules.

\bigskip

{\bf Acknowledgments.}
The authors thank the American Institute of Mathematics for supporting the workshop \lq\lq{Branching Problems for Unitary Representations\rq\rq} in July 2011 at the MPI in Bonn where this work was started.

\section{Preliminaries}\label{sec:pre}

To describe the explicit branching formulas we fix some notation for each simple Lie algebra $\frak{g}$
 of Hermitian type.
We write the Dynkin diagram of $\frak{g}_\bb{C}$
 corresponding to the positive roots
 $\Delta^+(\frak{k}_\bb{C},\frak{t}_\bb{C})
 \cup\Delta(\frak{p}_+,\frak{t}_\bb{C})$
 and label the simple roots as $\alpha_i$.
The painted circle corresponds to the root in
 $\Delta(\frak{p}_+,\frak{t}_\bb{C})$.
We denote by $\omega_i\in\frak{t}_\bb{C}^*$
 the fundamental weight corresponding to $\alpha_i$
 and give the weight $c\zeta\in\sqrt{-1}\frak{t}^*$ in terms of $\omega_i$.

\begin{set}
\label{sumn}
Let $\frak{g}=\frak{su}(m,n)$.
We label the simple roots of $\frak{g}_\bb{C}$ as
\[\begin{xy}
\ar@{-} (0,0) *++!D{\alpha_1} *{\circ}="A";
  (10,0) *++!D{\alpha_2}  *{\circ}="B"
\ar@{-} "B"; (20,0)
\ar@{.} (20,0); (30,0) 
\ar@{-} (30,0); (40,0) *++!D{\alpha_{m-1}}  *{\circ}="C"
\ar@{-} "C"; (50,0) *++!D{\alpha_m}  *{\bullet}="D"
\ar@{-} "D"; (60,0) *++!D{\alpha_{m+1}}  *{\circ}="E"
\ar@{-} "E"; (70,0) 
\ar@{.} (70,0); (80,0) 
\ar@{-} (80,0); (90,0) *++!D{\alpha_{m+n-1}}  *{\circ}="F"
\end{xy}\]
Then we have $c\zeta=\omega_m$.
\end{set}

\begin{set}
\label{so2e}
Let $\frak{g}=\frak{so}(2,2n)$.
We label the simple roots of $\frak{g}_\bb{C}$ as
\[\begin{xy}
\ar@{-} (0,0) *+!D{\alpha_1} *{\bullet}="A"; 
 (10,0)  *+!D{\alpha_2} *{\circ}="B"
\ar@{-} "B";  (20,0)
\ar@{.} (20,0); (30,0) 
\ar@{-} (30,0); (40,0) *+!DR{\alpha_{n-1}} *{\circ}="F"
\ar@{-} "F"; (45,8.6)  *+!L{\alpha_{n+1}} *{\circ}
\ar@{-} "F"; (45,-8.6)  *+!L{\alpha_{n}} *{\circ}
\end{xy}\]
Then we have $c\zeta=(n-1)\omega_1$.
\end{set}

\begin{set}
\label{so2o}
Let $\frak{g}=\frak{so}(2,2n+1)$.
We label the simple roots of $\frak{g}_\bb{C}$ as
\[\begin{xy}
\ar@{-} (0,0) *+!D{\alpha_1} *{\bullet}="A"; 
 (10,0)  *+!D{\alpha_2} *{\circ}="B"
\ar@{-} "B";  (20,0)
\ar@{.} (20,0); (30,0) 
\ar@{-} (30,0); (40,0) *+!D{\alpha_n} *{\circ}="F"
\ar@{=>} "F"; (50,0)  *+!D{\alpha_{n+1}} *{\circ}
\end{xy}\]
Then we have $c\zeta=(n-\frac{1}{2})\omega_1$.
\end{set}

\begin{set}
\label{sostar}
Let $\frak{g}=\frak{so}^*(2n)$.
We label the simple roots of $\frak{g}_\bb{C}$ as
\[\begin{xy}
\ar@{-} (0,0) *+!D{\alpha_1} *{\circ}="A";  (10,0)
\ar@{.} (10,0); (20,0) 
\ar@{-} (20,0); (30,0) *+!DR{\alpha_{n-2}} *{\circ}="F"
\ar@{-} "F"; (35,8.6)  *+!L{\alpha_{n}} *{\bullet}
\ar@{-} "F"; (35,-8.6)  *+!L{\alpha_{n-1}} *{\circ}
\end{xy}\]
Then we have $c\zeta=2\omega_n$.
\end{set}

\begin{set}
\label{spr}
Let $\frak{g}=\frak{sp}(n,\bb{R})$.
We label the simple roots of $\frak{g}_\bb{C}$ as
\[\begin{xy}
\ar@{-} (0,0) *+!D{\alpha_1} *{\circ}="A";  (10,0)
\ar@{.} (10,0); (20,0) 
\ar@{-} (20,0); (30,0) *+!D{\alpha_{n-1}} *{\circ}="F"
\ar@{<=} "F"; (40,0) *+!D{\alpha_{n}} *{\bullet}
\end{xy}\]
Then we have $c\zeta=\frac{1}{2}\omega_n$.
\end{set}

\begin{set}
\label{e6(-14)}
Let $\frak{g}=\frak{e}_{6(-14)}(\equiv\frak{e}_6^3)$ 
so that $\frak{k}_\bb{C}=\frak{so}(10,\bb{C})\oplus\bb{C}$.
We label the simple roots of $\frak{g}_\bb{C}$ as
\[\begin{xy}
\ar@{-} (0,0) *++!D{\alpha_1} *{\circ}="A"; (10,0) *++!D{\alpha_3} 
 *{\circ}="B"
\ar@{-} "B"; (20,0)*++!U{\alpha_4}  *{\circ}="C"
\ar@{-} "C"; (30,0) *++!D{\alpha_5}  *{\circ}="D"
\ar@{-} "C"; (20,10) *++!D{\alpha_2}  *{\circ}="G"
\ar@{-} "D"; (40,0) *++!D{\alpha_6}  *{\bullet}="E"
\end{xy}\]
Then we have $c\zeta=3\omega_6$.
\end{set}

\begin{set}
\label{e7(-25)}
Let $\frak{g}=\frak{e}_{7(-25)}(\equiv\frak{e}_7^3)$ 
so that $\frak{k}_\bb{C}=\frak{e}_{6,\bb{C}}\oplus\bb{C}$.
We label the simple roots of $\frak{g}_\bb{C}$ as
\[\begin{xy}
\ar@{-} (0,0) *++!D{\alpha_1} *{\circ}="A"; (10,0) *++!D{\alpha_3} 
 *{\circ}="B"
\ar@{-} "B"; (20,0)*++!U{\alpha_4}  *{\circ}="C"
\ar@{-} "C"; (30,0) *++!D{\alpha_5}  *{\circ}="D"
\ar@{-} "C"; (20,10) *++!D{\alpha_2}  *{\circ}="G"
\ar@{-} "D"; (40,0) *++!D{\alpha_6}  *{\circ}="E"
\ar@{-} "E"; (50,0) *++!D{\alpha_7}  *{\bullet}="F"
\end{xy}\]
Then we have $c\zeta=4\omega_7$.
\end{set}

\section{Branching laws for holomorphic symmetric pairs}\label{sec:branchhol}

Let $(\frak{g},\frak{g}^\sigma)$ be a symmetric pair of holomorphic type.
By \cite[Fact 5.4]{Kob98ii}, 
 any unitarizable lowest weight module of $\frak{g}$ is decomposed
 as a direct sum of lowest weight modules of $\frak{g}^\sigma$.
 
\begin{thm}
The explicit branching rules for the restriction of the minimal holomorphic representation $L^\frak{g}(c\zeta)$ of $\frak{g}$ to any symmetric subalgebra $\frak{g}^\sigma$ of holomorphic type are given by the formulas \eqref{eq:umnpq} -- \eqref{eq:e7sostar}.
\end{thm}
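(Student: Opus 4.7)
The plan is to treat each holomorphic symmetric pair $(\frak{g},\frak{g}^\sigma)$ from the classification individually, using one of the three methods outlined in the introduction (dual pair / seesaw duality, $K^\sigma$-type identification with $A_\frak{q}(\lambda)$-modules, or the Fock model for $\frak{so}(2,n)$). The starting point is \cite[Fact 5.4]{Kob98ii}, which already guarantees that
\[
L^\frak{g}(c\zeta)\big|_{\frak{g}^\sigma}\simeq\bigoplus_\nu m_\nu\,L^{\frak{g}^\sigma}(\nu)
\]
is a discrete sum of irreducible lowest weight modules of $\frak{g}^\sigma$; the task is to identify the set of occurring $\nu$ together with the multiplicities $m_\nu$.

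The common reduction step is to pass to a $K^\sigma$-type calculation. Using the $K$-type formula \eqref{eq:Ktype}, further branching each $F^\frak{k}(c\zeta+k\beta)$ to $K^\sigma$ yields the $\frak{k}^\sigma$-module structure of $L^\frak{g}(c\zeta)$. Since a lowest weight $\frak{g}^\sigma$-module is determined by its lowest $K^\sigma$-type, any conjectured summand $L^{\frak{g}^\sigma}(\nu)$ must have its lowest $\frak{k}^\sigma$-type appear on the right-hand side, and conversely the set of minimal $\frak{k}^\sigma$-types in $L^\frak{g}(c\zeta)|_{\frak{k}^\sigma}$ must parametrize the summands on the left. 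Checking agreement of multiplicities at the level of $K^\sigma$ is then enough to verify the claimed formula in each case.

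For the pairs built from $\frak{sp}(n,\bb{R})$ and $\frak{su}(m,n)$ where a convenient dual pair exists, the plan is to realize $L^\frak{g}(c\zeta)$ inside a Weil (or oscillator) representation and exploit a seesaw pair whose other leg is compact, so that the branching multiplicities on the noncompact side are computed by a finite-dimensional branching problem on the compact side. For pairs such as $(\frak{so}^*(2n),\frak{u}(p,n-p))$ or $(\frak{e}_{7(-25)},\frak{e}_{6(-14)}\oplus\frak{so}(2))$, where no natural dual pair is available, I would compute $K^\sigma$-type multiplicities from \eqref{eq:Ktype} using classical branching (Littlewood--Richardson, Pieri-type, or explicit Weyl character computations for the exceptional cases) and then match each family of $K^\sigma$-types to the bottom layer of a cohomologically induced Zuckerman module $A_\frak{q}(\lambda)$, fixing the parameter by the lowest $K^\sigma$-type. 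For the pairs with $\frak{g}=\frak{so}(2,n)$, the Fock realization gives an explicit polynomial model on which the symmetric subalgebra acts in a way that exhibits the decomposition directly via harmonic decompositions of polynomial spaces.

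The main obstacle I expect is the combinatorial bookkeeping: although the existence of a discrete sum decomposition is automatic, one must verify that the explicit multiplicities $m_\nu$ in the proposed formulas \eqref{eq:umnpq}--\eqref{eq:e7sostar} coincide with those produced by the $K^\sigma$-branching of \eqref{eq:Ktype}, and in the cases handled by several methods simultaneously the answers must be shown to agree. The exceptional pair $(\frak{e}_{7(-25)},\frak{e}_{6(-14)}\oplus\frak{so}(2))$ is the most delicate, since neither a dual pair nor a Fock model is available and the $A_\frak{q}(\lambda)$ identification has to proceed by explicit root data in $\frak{e}_7$; this is where the bulk of the technical effort will concentrate.
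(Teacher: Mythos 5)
Your proposal matches the paper's strategy almost exactly: the paper proves the formulas case by case using the same three methods (seesaw dual pairs with a compact member, identification of the summands with $A_{\frak q}(\lambda)$ via comparison of $\frak{k}^\sigma$-types using the generalized Blattner formula, and the Fock model for $\frak{so}(2,N)$), and the reduction to a $\frak{k}^\sigma$-type computation is justified there by \cite[Lemma 8.7]{Kob07}, which is precisely your observation that a direct sum of unitarizable lowest weight modules is determined by its $K^\sigma$-decomposition. The only minor inaccuracy is that the paper does in fact treat $(\frak{so}^*(2n),\frak{su}(m,n-m)\oplus\frak{u}(1))$ by a seesaw dual pair ($O^*(2n)\times Sp(1)$ against $U(m,n-m)\times U(2)$) in addition to the $A_{\frak q}(\lambda)$ method, but this does not affect the correctness of your plan.
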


\subsection{$(\frak{g},\frak{g}^\sigma)=(\frak{su}(m,n),\frak{su}(p,q)\oplus\frak{su}(m-p,n-q)
 \oplus\frak{u}(1))$}

Let $\alpha_i$ be as in Setting~\ref{sumn}.
We may assume that $\sigma=1$ on $\frak{t}$,
 $\sigma=1$ on $\frak{g}_{\alpha_i}$ for $i\neq p,m+n-q$ and 
 $\sigma=-1$ on $\frak{g}_{\alpha_i}$ for $i=p,m+n-q$.
We put
\begin{align*}
&\beta_i:=\alpha_i \ (1\leq i\leq p-1), \quad
\beta_p:=\alpha_p+\alpha_{p+1}+\cdots+\alpha_{m+n-q},\\
&\beta_i:= \alpha_{i+m+n-p-q} \ (p+1\leq i\leq p+q-1),\\
&\gamma_i:=\alpha_{p+i}\ (1\leq i\leq m+n-p-q-1).
\end{align*}
Then $\beta_i$ form a set of simple roots for $\frak{sl}(p+q,\bb{C})$,
 the first component of $\frak{g}^\sigma_\bb{C}$, and
 $\gamma_i$ form one for $\frak{sl}(m-p+n-q,\bb{C})$, the second component.
Write $\mu_i$ and $\nu_i$ for the fundamental weights corresponding to
 $\beta_i$ and $\gamma_i$, respectively.
Let $e\in\frak{t}$ be the vector in the $\frak{u}(1)$-component
 of $\frak{g}^\sigma$ such that $\alpha_p(e)=\sqrt{-1}$.
Then the restriction $L^{\frak{g}}(\omega_m)|_{\frak{g}^\sigma}$ decomposes as 
\begin{align}
\label{eq:umnpq}
L^{\frak{g}}(\omega_m)|_{\frak{g}^\sigma} \simeq{}& \bigoplus_{k=0}^{\infty}
L^{\frak{su}(p,q)}(\mu_p+k\mu_{p+q-1})\boxtimes
L^{\frak{su}(m-p,n-q)}(k\nu_1+\nu_{m-p})\boxtimes
\bb{C}_{-k+\frac{np-mq}{m+n}}\\ 
& \oplus\bigoplus_{k=1}^{\infty}
L^{\frak{su}(p,q)}(k\mu_1+\mu_p)\boxtimes
L^{\frak{su}(m-p,n-q)}(\nu_{m-p}+k\nu_{m+n-p-q-1})\boxtimes
\bb{C}_{k+\frac{np-mq}{m+n}}\nonumber
\end{align}
if $p,q,m-p,n-q\geq 1$
and 
\begin{align}
\label{eq:umnpn}
L^{\frak{g}}(\omega_m)|_{\frak{g}^\sigma}
\simeq
\bigoplus_{k=0}^{\infty}
L^{\frak{su}(p,q)}(\mu_p+k\mu_{p+q-1})\boxtimes
F^{\frak{su}(m-p)}(k\nu_1)\boxtimes
\bb{C}_{-k+\frac{n(p-m)}{m+n}}
\end{align}
if $n=q$ and $p,q,m-p\geq 1$.
Here $\bb{C}_a$ is the character of $\frak{u}(1)$-component
 of $\frak{g}^\sigma$ on which $e$ acts as $\sqrt{-1}a$.

\smallskip
\subsection{$(\frak{g},\frak{g}^\sigma)=(\frak{su}(n,n),\frak{so}^*(2n))$}

Let $\alpha_i$ be as in Setting~\ref{sumn} for $m=n$.
We may assume $\sigma\frak{t}=\frak{t}$,
 $\sigma\alpha_i=\alpha_{2n-i}$ for $1\leq i\leq n$,
 and $\sigma=-1$ on $\frak{g}_{\alpha_n}$.
Then $\frak{t}^\sigma$ is a Cartan subalgebra of $\frak{k}^\sigma$.
We put
\begin{align*}
\beta_i:=\alpha_i|_{\frak{t}^\sigma}\ (1\leq i\leq n-1),\quad
\beta_n:=(\alpha_{n-1}+\alpha_n)|_{\frak{t}^\sigma}.
\end{align*}
Then $\beta_i$ form a set of simple roots for $\frak{g}^\sigma_\bb{C}$.
Write $\mu_i$ for the corresponding fundamental weights.
Then the restriction $L^{\frak{g}}(\omega_n)|_{\frak{g}^\sigma}$ decomposes as 
\begin{align}\label{eq:unnsostar}
L^{\frak{g}}(\omega_n)|_{\frak{g}^\sigma}
\simeq
\bigoplus_{k=0}^{\infty} L^{\frak{g}^\sigma}(2k\mu_1+2\mu_n).
\end{align}

\smallskip
\subsection{$(\frak{g},\frak{g}^\sigma)=(\frak{su}(n,n),\frak{sp}(n,\bb{R}))$}

Let $\alpha_i$ be as in Setting~\ref{sumn} for $m=n$.
We may assume $\sigma\frak{t}=\frak{t}$,
 $\sigma\alpha_i=\alpha_{2n-i}$ for $1\leq i\leq n$,
 and $\sigma=1$ on $\frak{g}_{\alpha_n}$.
Then $\frak{t}^\sigma$ is a Cartan subalgebra of $\frak{k}^\sigma$.
We put
\begin{align*}
\beta_i:=\alpha_i|_{\frak{t}^\sigma}\ (1\leq i\leq n).
\end{align*}
Then $\beta_i$ form a set of simple roots for $\frak{g}^\sigma_\bb{C}$.
Write $\mu_i$ for the corresponding fundamental weights.
Then the restriction $L^{\frak{g}}(\omega_n)|_{\frak{g}^\sigma}$ decomposes as 
\begin{align}\label{eq:unnspn}
L^{\frak{g}}(\omega_n)|_{\frak{g}^\sigma}
\simeq L^{\frak{g}^\sigma}(\mu_n)\oplus L^{\frak{g}^\sigma}(\mu_2+\mu_n).
\end{align}

\smallskip
\subsection{$(\frak{g},\frak{g}^\sigma)=(\frak{so}(2,2n),\frak{su}(1,n)\oplus\frak{u}(1))$}

Let $\alpha_i$ be as in Setting~\ref{so2e}.
We may assume $\sigma=1$ on $\frak{t}$, 
 $\sigma=1$ on $\frak{g}_{\alpha_i}$ for $1\leq i\leq n$,
 and $\sigma=-1$ on $\frak{g}_{\alpha_{n+1}}$.
We put 
\begin{align*}
\beta_i:=\alpha_i\ (1\leq i\leq n).
\end{align*}
Then $\beta_i$ form a set of simple roots for $\frak{sl}(n+1,\bb{C})$.
Write $\mu_i$ for the corresponding fundamental weights.
Let $e\in\frak{t}$ be the vector in the $\frak{u}(1)$-component
 of $\frak{g}^\sigma$ such that $\alpha_{n+1}(e)=\sqrt{-1}$.
Then the restriction $L^{\frak{g}}((n-1)\omega_1)|_{\frak{g}^\sigma}$
 decomposes as 
\begin{align}
\label{eq:sou}
L^{\frak{g}}((n-1)\omega_1)|_{\frak{g}^\sigma}
\simeq
\bigoplus_{k=0}^{\infty} L^{\frak{su}(1,n)}((n-1)\mu_1+k\mu_2)
\boxtimes \bb{C}_{k+\frac{n-1}{2}},
\end{align}
where $\bb{C}_a$ is the character of $\frak{u}(1)$-component
 of $\frak{g}^\sigma$ on which $e$ acts as $\sqrt{-1}a$.

\smallskip
\subsection{$(\frak{g},\frak{g}^\sigma)=(\frak{so}(2,2n),\frak{so}(2,m)\oplus\frak{so}(2n-m))$}

Let $\alpha_i$ be as in Setting~\ref{so2e}.

Suppose first that $m$ is even and $m=2l$.
Then we may assume $\sigma=1$ on $\frak{t}$.
If $n-l>1$, suppose $\sigma=1$ on $\frak{g}_{\alpha_i}$ for $i\neq l+1$
 and $\sigma=-1$ on $\frak{g}_{\alpha_{l+1}}$.
If $n-l=1$, suppose $\sigma=1$ on $\frak{g}_{\alpha_i}$ for $i<n$
 and $\sigma=-1$ on $\frak{g}_{\alpha_i}$ for $i=n, n+1$.
We put 
\begin{align*}
&\beta_i:=\alpha_i\ (1\leq i\leq l), \quad
\beta_{l+1}:=\alpha_l+2\alpha_{l+1}+\cdots
 +2\alpha_{n-1}+\alpha_n+\alpha_{n+1},\\
&\gamma_i:=\alpha_{i+l+1}\ (1\leq i\leq n-l).
\end{align*}
Then $\beta_i$ form a set of simple roots for $\frak{so}(2l+2,\bb{C})$,
 the first component of $\frak{g}^\sigma_\bb{C}$
 and $\gamma_i$ form one for $\frak{so}(2n-2l,\bb{C})$ if $n-l\geq 2$.
Write $\mu_i$ and $\nu_i$ for the fundamental weights corresponding to
 $\beta_i$ and $\gamma_i$, respectively.
If $n-l=1$,
 let $e\in\frak{t}$ be the vector in the $\frak{so}(2n-2l)$-component
 of $\frak{g}^\sigma$ such that $\alpha_{n+1}(e)=\sqrt{-1}$.
Then the restriction $L^{\frak{g}}((n-1)\omega_1)|_{\frak{g}^\sigma}$
 decomposes as 
\begin{align}\label{eq:so2eso2e}
L^{\frak{g}}((n-1)\omega_1)|_{\frak{g}^\sigma}
\simeq
\bigoplus_{k=0}^{\infty} L^{\frak{so}(2,2l)}((n+k-1)\mu_1)
\boxtimes F^{\frak{so}(2n-2l)}(k\nu_1)
\end{align}
if $n-l\geq 2$ and 
\begin{align}
L^{\frak{g}}((n-1)\omega_1)|_{\frak{g}^\sigma}
\simeq
\bigoplus_{k=-\infty}^{\infty} L^{\frak{so}(2,2n-2)}((n+|k|-1)\mu_1)
\boxtimes \bb{C}_k
\end{align}
if $n-l=1$,
where $\bb{C}_a$ is the character of $\frak{so}(2)$-component
 of $\frak{g}^\sigma$ on which $e$ acts as $\sqrt{-1}a$.

Suppose next that $m$ is odd and $m=2l+1$.
Then we may assume $\sigma\frak{t}=\frak{t}$,
 $\sigma\alpha_i=\alpha_i$ for $1\leq i\leq n-1$, 
 and $\sigma\alpha_n=\alpha_{n+1}$.
If $n-l>1$, suppose
 $\sigma=1$ on $\frak{g}_{\alpha_i}$ for $1\leq i\leq l$ or
 $l+2\leq i\leq n-1$,
 and $\sigma=-1$ on $\frak{g}_{\alpha_{l+1}}$.
If $n-l=1$, suppose 
 $\sigma=1$ on $\frak{g}_{\alpha_i}$ for $1\leq i\leq n-1$.
Then $\frak{t}^\sigma$ is a Cartan subalgebra of $\frak{k}^\sigma$.
We put 
\begin{align*}
&\beta_i:=\alpha_i|_{\frak{t}^\sigma}\ (1\leq i\leq l), \quad
\beta_{l+1}:=(\alpha_{l+1}+\alpha_{l+2}+\cdots+\alpha_n)|_{\frak{t}^\sigma},\\
&\gamma_i:=\alpha_{i+l+1}|_{\frak{t}^\sigma}\ (1\leq i\leq n-l-1).
\end{align*}
Then $\beta_i$ form a set of simple roots for $\frak{so}(2l+3,\bb{C})$
 and $\gamma_i$ form one for $\frak{so}(2n-2l-1,\bb{C})$.
Write $\mu_i$ and $\nu_i$ for the fundamental weights corresponding to
 $\beta_i$ and $\gamma_i$, respectively.
Then the restriction $L^{\frak{g}}((n-1)\omega_1)|_{\frak{g}^\sigma}$
 decomposes as 
\begin{align}
L^{\frak{g}}((n-1)\omega_1)|_{\frak{g}^\sigma}
\simeq
\bigoplus_{k=0}^{\infty} L^{\frak{so}(2,2l+1)}((n+k-1)\mu_1)
\boxtimes F^{\frak{so}(2n-2l-1)}(k\nu_1)
\end{align}
if $n-l\geq 2$ and
\begin{align}\label{eq:so2eso2e-1}
L^{\frak{g}}((n-1)\omega_1)|_{\frak{g}^\sigma}
\simeq
L^{\frak{g}^\sigma}((n-1)\mu_1)\oplus L^{\frak{g}^\sigma}(n\mu_1)
\end{align}
if $n-l=1$.

\smallskip
\subsection{$(\frak{g},\frak{g}^\sigma)=(\frak{so}(2,2n+1),\frak{so}(2,m)\oplus\frak{so}(2n-m+1))$}

Let $\alpha_i$ be as in Setting~\ref{so2o}.

Suppose first that $m$ is even and $m=2l$.
Then we may assume $\sigma=1$ on $\frak{t}$,
 $\sigma=1$ on $\frak{g}_{\alpha_i}$ for $i\neq l+1$,
 and $\sigma=-1$ on $\frak{g}_{\alpha_{l+1}}$.
We put 
\begin{align*}
&\beta_i:=\alpha_i\ (1\leq i\leq l), \quad
\beta_{l+1}:=\alpha_l+2\alpha_{l+1}+\cdots+2\alpha_{n+1},\\
&\gamma_i:=\alpha_{i+l+1}\ (1\leq i\leq n-l).
\end{align*}
Then $\beta_i$ form a set of simple roots for $\frak{so}(2l+2,\bb{C})$
 and $\gamma_i$ form one for $\frak{so}(2n-2l+1,\bb{C})$.
Write $\mu_i$ and $\nu_i$ for the fundamental weights corresponding to
 $\beta_i$ and $\gamma_i$, respectively.
Then the restriction $L^{\frak{g}}((n-\frac{1}{2})\omega_1)|_{\frak{g}^\sigma}$
 decomposes as 
\begin{align}
L^{\frak{g}}\Bigl(\Bigl(n-\frac{1}{2}\Bigr)\omega_1\Bigr)
\Bigl|_{\frak{g}^\sigma}
\simeq
\bigoplus_{k=0}^{\infty} L^{\frak{so}(2,2l)}
 \Bigl(\Bigl(n+k-\frac{1}{2}\Bigr)\mu_1\Bigr)
\boxtimes F^{\frak{so}(2n-2l+1)}(k\nu_1)
\end{align}
if $n>l$ and 
\begin{align}\label{eq:so2oso2o-1}
L^{\frak{g}}\Bigl(\Bigl(n-\frac{1}{2}\Bigr)\omega_1\Bigr)
\Bigl|_{\frak{g}^\sigma}
\simeq
 L^{\frak{g}^\sigma}
 \Bigl(\Bigl(n-\frac{1}{2}\Bigr)\mu_1\Bigr)\oplus
 L^{\frak{g}^\sigma}
 \Bigl(\Bigl(n+\frac{1}{2}\Bigr)\mu_1\Bigr)
\end{align}
if $n=l$.

Suppose next that $m$ is odd and $m=2l+1$.
Then we may assume $\sigma=1$ on $\frak{t}$,
 $\sigma=1$ on $\frak{g}_{\alpha_i}$ for $i\neq l+1, n+1$,
 and $\sigma=-1$ on $\frak{g}_{\alpha_i}$ for $i=l+1, n+1$.
We put 
\begin{align*}
&\beta_i:=\alpha_i\ (1\leq i\leq l), \quad
\beta_{l+1}:=\alpha_{l+1}+\cdots+\alpha_{n+1}, \\
&\gamma_i:=\alpha_{i+l+1}\ (1\leq i\leq n-l-1), \quad
\gamma_{n-l}:=\alpha_{n}+2\alpha_{n+1}.
\end{align*}
Then $\beta_i$ form a set of simple roots for $\frak{so}(2l+3,\bb{C})$
 and $\gamma_i$ form one for $\frak{so}(2n-2l,\bb{C})$ if $n-l\geq 2$.
Write $\mu_i$ and $\nu_i$ for the fundamental weights corresponding to
 $\beta_i$ and $\gamma_i$, respectively.
If $n-l=1$,
 let $e\in\frak{t}$ be the vector in the $\frak{so}(2n-2l)$-component
 of $\frak{g}^\sigma$ such that $\alpha_{n+1}(e)=\sqrt{-1}$.
Then the restriction $L^{\frak{g}}((n-\frac{1}{2})\omega_1)|_{\frak{g}^\sigma}$
 decomposes as 
\begin{align}
L^{\frak{g}}\Bigl(\Bigl(n-\frac{1}{2}\Bigr)\omega_1\Bigr)
\Bigl|_{\frak{g}^\sigma}
\simeq
\bigoplus_{k=0}^{\infty} L^{\frak{so}(2,2l+1)}
\Bigl(\Bigl(n+k-\frac{1}{2}\Bigr)\mu_1\Bigr)
\boxtimes F^{\frak{so}(2n-2l)}(k\nu_1)
\end{align}
if $n-l\geq 2$ and 
\begin{align}\label{eq:so2oso2o-2}
L^{\frak{g}}\Bigl(\Bigl(n-\frac{1}{2}\Bigr)\omega_1\Bigr)
\Bigl|_{\frak{g}^\sigma}
\simeq
\bigoplus_{k=-\infty}^{\infty} L^{\frak{so}(2,2n-1)}
\Bigl(\Bigl(n+|k|-\frac{1}{2}\Bigr)\mu_1\Bigr)
\boxtimes \bb{C}_k
\end{align}
if $n-l=1$,
where $\bb{C}_a$ is the character of $\frak{so}(2)$-component
 of $\frak{g}^\sigma$ on which $e$ acts as $\sqrt{-1}a$.

\smallskip
\subsection{$(\frak{g},\frak{g}^\sigma)=(\frak{so}^*(2n),\frak{su}(m,n-m)\oplus\frak{u}(1))$}

Let $\alpha_i$ be as in Setting~\ref{sostar}.
We may assume $\sigma=1$ on $\frak{t}$.
If $m<n-1$, suppose $\sigma=1$ on $\frak{g}_{\alpha_i}$ for $i\neq m,n$
 and $\sigma=-1$ on $\frak{g}_{\alpha_i}$ for $i=m,n$.
If $m=n-1$, suppose $\sigma=1$ on $\frak{g}_{\alpha_i}$ for $i\neq n-1$
 and $\sigma=-1$ on $\frak{g}_{\alpha_{n-1}}$.
We put 
\begin{align*}
&\beta_i:=\alpha_i\ (1\leq i\leq m-1), \quad
\beta_m:=\alpha_m+\cdots+\alpha_{n-2}+\alpha_n, \\
&\beta_i:=\alpha_{m+n-i}\ (m+1\leq i\leq n-1),
\end{align*}
if $m<n-1$ and
\begin{align*}
\beta_i:=\alpha_i\ (1\leq i\leq n-2), \quad
\beta_{n-1}:=\alpha_{n-2}+\alpha_{n-1}+\alpha_n,
\end{align*}
if $m=n-1$.
Then $\beta_i$ form a set of simple roots for $\frak{sl}(n,\bb{C})$.
Write $\mu_i$ for the corresponding fundamental weights.
Let $e\in\frak{t}$ be the vector in the $\frak{u}(1)$-component
 of $\frak{g}^\sigma$ such that $\alpha_m(e)=\sqrt{-1}$.
Then the restriction $L^{\frak{g}}(2\omega_n)|_{\frak{g}^\sigma}$
 decomposes as 
\begin{align}\label{eq:sostaru}
L^{\frak{g}}(2\omega_n)|_{\frak{g}^\sigma}
\simeq{}&\bigoplus_{k=0}^{\infty}
L^{\frak{su}(m,n-m)}(2\mu_m+k\mu_{n-2})\boxtimes \bb{C}_{-k-\frac{n}{2}+m}
\\ \nonumber
&\oplus\bigoplus_{k=1}^{\infty}
L^{\frak{su}(m,n-m)}(k\mu_2+2\mu_m)\boxtimes \bb{C}_{k-\frac{n}{2}+m}
\end{align}
if $2\leq m\leq n-2$, 
\begin{align}\label{eq:sostaru1n-1}
L^{\frak{g}}(2\omega_n)|_{\frak{g}^\sigma}
\simeq\bigoplus_{k=0}^{\infty}
L^{\frak{su}(1,n-1)}(2\mu_1+k\mu_{n-2})\boxtimes \bb{C}_{-k-\frac{n}{2}+1}
\end{align}
if $m=1$, and
\begin{align}\label{eq:sostarun-11}
L^{\frak{g}}(2\omega_n)|_{\frak{g}^\sigma}
\simeq\bigoplus_{k=0}^{\infty}
L^{\frak{su}(n-1,1)}(k\mu_2+2\mu_{n-1})\boxtimes \bb{C}_{k+\frac{n}{2}-1}
\end{align}
if $m=n-1$.
Here, $\bb{C}_a$ is the character of $\frak{u}(1)$-component
 of $\frak{g}^\sigma$ on which $e$ acts as $\sqrt{-1}a$.

\smallskip
\subsection{$(\frak{g},\frak{g}^\sigma)=(\frak{so}^*(2n),\frak{so}^*(2m)\oplus\frak{so}^*(2n-2m))$}

Let $\alpha_i$ be as in Setting~\ref{sostar}.
We may assume $\sigma=1$ on $\frak{t}$.
If $n-m>1$, suppose $\sigma=1$ on $\frak{g}_{\alpha_i}$ for $i\neq m$
 and $\sigma=-1$ on $\frak{g}_{\alpha_m}$ and 
 we put 
\begin{align*}
&\beta_i:=\alpha_i\ (1\leq i\leq m-1), \quad
\beta_m:=\alpha_m+2\alpha_{m+1}+\cdots+2\alpha_{n-2}+\alpha_{n-1}+\alpha_n,\\
&\gamma_i:=\alpha_{i+m}\ (1\leq i\leq n-m).
\end{align*}
If $n-m=1$, suppose $\sigma=1$ on $\frak{g}_{\alpha_i}$ for $i<n-1$
 and $\sigma=-1$ on $\frak{g}_{\alpha_i}$ for $i=n-1,n$ and we put
\begin{align*}
\beta_i:=\alpha_i\ (1\leq i\leq n-2), \quad
\beta_{n-1}:=\alpha_{n-2}+\alpha_{n-1}+\alpha_n.
\end{align*}
Then $\beta_i$ form a set of simple roots for $\frak{so}(2m,\bb{C})$
 if $m>1$ and $\gamma_i$ form one for $\frak{so}(2n-2m,\bb{C})$ if $n-m>1$.
Write $\mu_i$ and $\nu_i$ for the fundamental weights corresponding to
 $\beta_i$ and $\gamma_i$, respectively.
If $m=1$,
 let $e\in\frak{t}$ be the vector in the $\frak{so}^*(2m)$-component
 of $\frak{g}^\sigma$ such that $\alpha_1(e)=\sqrt{-1}$.
If $n-m=1$,
 let $e\in\frak{t}$ be the vector in the $\frak{so}^*(2n-2m)$-component
 of $\frak{g}^\sigma$ such that $\alpha_n(e)=\sqrt{-1}$.
Then the restriction $L^{\frak{g}}(2\omega_n)|_{\frak{g}^\sigma}$ 
decomposes as 
\begin{align}\label{eq:sostarsostar}
L^{\frak{g}}(2\omega_n)|_{\frak{g}^\sigma}
\simeq
\bigoplus_{k=0}^{\infty} L^{\frak{so}^*(2m)}(k\mu_1+2\mu_m)
\boxtimes  L^{\frak{so}^*(2n-2m)}(k\nu_1+2\nu_{n-m})
\end{align}
if $2\leq m \leq n-2$,
\begin{align}\label{eq:sostarsostar1n-1}
L^{\frak{g}}(2\omega_n)|_{\frak{g}^\sigma}
\simeq
\bigoplus_{k=0}^{\infty} \bb{C}_{k+1}
\boxtimes  L^{\frak{so}^*(2n-2)}(k\nu_1+2\nu_{n-1})
\end{align}
if $m=1$, and 
\begin{align}\label{eq:sostarsostarn-11}
L^{\frak{g}}(2\omega_n)|_{\frak{g}^\sigma}
\simeq
\bigoplus_{k=0}^{\infty} 
  L^{\frak{so}^*(2n-2)}(k\mu_1+2\mu_{n-1}) \boxtimes \bb{C}_{k+1}
\end{align}
if $n-m=1$.
Here, $\bb{C}_a$ is the character of $\frak{u}(1)$-component
 of $\frak{g}^\sigma$ on which $e$ acts as $\sqrt{-1}a$.

\smallskip
\subsection{$(\frak{g},\frak{g}^\sigma)=(\frak{sp}(n,\bb{R}),\frak{su}(m,n-m)\oplus\frak{u}(1))$}

Let $\alpha_i$ be as in Setting~\ref{spr}.
We may assume $\sigma=1$ on $\frak{t}$,
 $\sigma=1$ on $\frak{g}_{\alpha_i}$ for $i\neq m,n$,
 and $\sigma=-1$ on $\frak{g}_{\alpha_i}$ for $i=m,n$.
We put 
\begin{align*}
&\beta_i:=\alpha_i\ (1\leq i\leq m-1), \quad
\beta_m:=\alpha_m+\cdots+\alpha_n, \\
&\beta_i:= \alpha_{m+n-i}\ (m+1\leq i\leq n-1).
\end{align*}
Then $\beta_i$ form a set of simple roots for $\frak{sl}(n,\bb{C})$.
Write $\mu_i$ for the corresponding fundamental weights.
Let $e\in\frak{t}$ be the vector in the $\frak{u}(1)$-component
 of $\frak{g}^\sigma$ such that $\alpha_m(e)=\sqrt{-1}$.
Then the restriction $L^{\frak{g}}(\frac{1}{2}\omega_n)|_{\frak{g}^\sigma}$
 decomposes as 
\begin{align}\label{eq:spu}
L^{\frak{g}}\Bigl(\frac{1}{2}\omega_n\Bigr)\Bigl|_{\frak{g}^\sigma}
\simeq{}&\bigoplus_{k=0}^{\infty}
L^{\frak{su}(m,n-m)}(\mu_m+2k\mu_{n-1})
 \boxtimes \bb{C}_{-k-\frac{n}{4}+\frac{m}{2}}\\ \nonumber
&\oplus\bigoplus_{k=1}^{\infty}
 L^{\frak{su}(m,n-m)}(2k\mu_1+\mu_m)
 \boxtimes \bb{C}_{k-\frac{n}{4}+\frac{m}{2}},
\end{align}
 where $\bb{C}_a$ is the character of $\frak{u}(1)$-component
 of $\frak{g}^\sigma$ on which $e$ acts as $\sqrt{-1}a$.

\smallskip
\subsection{$(\frak{g},\frak{g}^\sigma)=(\frak{sp}(n,\bb{R}),\frak{sp}(m,\bb{R})\oplus\frak{sp}(n-m,\bb{R}))$}

Let $\alpha_i$ be as in Setting~\ref{spr}.
We may assume $\sigma=1$ on $\frak{t}$,
 $\sigma=1$ on $\frak{g}_{\alpha_i}$ for $i\neq m$,
 and $\sigma=-1$ on $\frak{g}_{\alpha_m}$.
We put 
\begin{align*}
&\beta_i:=\alpha_i\ (1\leq i\leq m-1), \quad
\beta_m:=2\alpha_m+2\alpha_{m+1}+\cdots+2\alpha_{n-1}+\alpha_n,\\
&\gamma_i:=\alpha_{i+m}\ (1\leq i\leq n-m).
\end{align*}
Then $\beta_i$ form a set of simple roots for $\frak{sp}(m,\bb{C})$
 and $\gamma_i$ form one for $\frak{sp}(n-m,\bb{C})$.
Write $\mu_i$ and $\nu_i$ for the fundamental weights corresponding to
 $\beta_i$ and $\gamma_i$, respectively.
Then the restriction $L^{\frak{g}}(\frac{1}{2}\omega_n)|_{\frak{g}^\sigma}$
 decomposes as 
\begin{align}
L^{\frak{g}}\Bigl(\frac{1}{2}\omega_n\Bigr)\Bigl|_{\frak{g}^\sigma}
\simeq{}&
 \Bigl(L^{\frak{sp}(m,\bb{R})}\Bigl(\frac{1}{2}\mu_m\Bigr)
\boxtimes  L^{\frak{sp}(n-m,\bb{R})}\Bigl(\frac{1}{2}\nu_{n-m}\Bigr)\Bigr) \\ \nonumber
&\oplus\Bigl(L^{\frak{sp}(m,\bb{R})}\Bigl(\mu_1+\frac{1}{2}\mu_m\Bigr)
\boxtimes  L^{\frak{sp}(n-m,\bb{R})}\Bigl(\nu_1+\frac{1}{2}\nu_{n-m}\Bigr)\Bigr).
\end{align}

\smallskip
\subsection{$(\frak{g},\frak{g}^\sigma)=(\frak{e}_{6(-14)},\frak{so}(2,8)\oplus\frak{so}(2))$}

Let $\alpha_i$ be as in Setting~\ref{e6(-14)}.
We may assume $\sigma=1$ on $\frak{t}$,
 $\sigma=1$ on $\frak{g}_{\alpha_i}$ for $i\neq 1$,
 and $\sigma=-1$ on $\frak{g}_{\alpha_1}$.
We put 
\begin{align*}
\beta_i:=\alpha_{7-i}\ (1\leq i\leq 5).
\end{align*}
Then $\beta_i$ form a set of simple roots for $\frak{so}(10,\bb{C})$,
 the first component of $\frak{g}^\sigma_\bb{C}$.
Write $\mu_i$ for the corresponding fundamental weights.
Let $e\in\frak{t}$ be the vector in the $\frak{u}(1)$-component
 of $\frak{g}^\sigma$ such that $\alpha_1(e)=\sqrt{-1}$.
Then the restriction $L^{\frak{g}}(3\omega_6)|_{\frak{g}^\sigma}$
 decomposes as 
\begin{align}\label{eq:e6so}
L^{\frak{g}}(3\omega_6)|_{\frak{g}^\sigma}
&\simeq\bigoplus_{k=0}^{\infty}
L^{\frak{so}(2,8)}(3\mu_1+k\mu_5)\boxtimes \bb{C}_{k+2},
\end{align}
 where $\bb{C}_a$ is the character of $\frak{so}(2)$-component
 of $\frak{g}^\sigma$ on which $e$ acts as $\sqrt{-1}a$.

\smallskip
\subsection{$(\frak{g},\frak{g}^\sigma)=(\frak{e}_{6(-14)},\frak{su}(4,2)\oplus\frak{su}(2))$}

Let $\alpha_i$ be as in Setting~\ref{e6(-14)}.
We may assume $\sigma=1$ on $\frak{t}$,
 $\sigma=1$ on $\frak{g}_{\alpha_i}$ for $i\neq 3$,
 and $\sigma=-1$ on $\frak{g}_{\alpha_3}$.
We put
\begin{align*}
&\beta_1:=\alpha_2,\quad \beta_2:=\alpha_4,\quad \beta_3=\alpha_5,\quad
\beta_4:=\alpha_6,\\
&\beta_5:=\alpha_1+\alpha_2+2\alpha_3+2\alpha_4+\alpha_5,\quad
 \gamma_1:=\alpha_1.
\end{align*}
Then $\beta_i$ form a set of simple roots for $\frak{sl}(6,\bb{C})$
 and $\gamma_1$ is a root for $\frak{sl}(2,\bb{C})$.
Write $\mu_i$ and $\nu_1$ for the fundamental weights corresponding to
 $\beta_i$ and $\gamma_1$, respectively.
Then the restriction $L^{\frak{g}}(3\omega_6)|_{\frak{g}^\sigma}$
 decomposes as 
\begin{align}
L^{\frak{g}}(3\omega_6)|_{\frak{g}^\sigma}
&\simeq\bigoplus_{k=0}^{\infty}
L^{\frak{su}(4,2)}(k\mu_3+3\mu_4)\boxtimes
F^{\frak{su}(2)}(k\nu_1).
\end{align}

\smallskip
\subsection{$(\frak{g},\frak{g}^\sigma)=(\frak{e}_{6(-14)},\frak{so}^*(10)\oplus\frak{so}(2))$}

Let $\alpha_i$ be as in Setting~\ref{e6(-14)}.
We may assume $\sigma=1$ on $\frak{t}$,
 $\sigma=1$ on $\frak{g}_{\alpha_i}$ for $i\neq 2,6$,
 and $\sigma=-1$ on $\frak{g}_{\alpha_i}$ for $i=2,6$.
We put
\begin{align*}  \beta_i:=\alpha_{6-i}\ (1\leq i\leq 3),\quad
 \beta_4:=\alpha_1,\quad \beta_5:=\alpha_2+\alpha_4+\alpha_5+\alpha_6.
\end{align*}
Then $\beta_i$ form a set of simple roots for $\frak{so}(10,\bb{C})$.
Write $\mu_i$ for the corresponding fundamental weights.
Let $e\in\frak{t}$ be the vector in the $\frak{u}(1)$-component
 of $\frak{g}^\sigma$ such that $\alpha_2(e)=\sqrt{-1}$.
Then the restriction $L^{\frak{g}}(3\omega_6)|_{\frak{g}^\sigma}$
 decomposes as 
\begin{align}
L^{\frak{g}}(3\omega_6)|_{\frak{g}^\sigma}
\simeq\bigoplus_{k=0}^{\infty}
\Bigl(L^{\frak{so}^*(10)}((k+3)\mu_5)\boxtimes \bb{C}_{k-1}\Bigr)
\oplus\bigoplus_{k=1}^{\infty}
\Bigl(L^{\frak{so}^*(10)}(k\mu_4+3\mu_5)\boxtimes \bb{C}_{-k-1}\Bigr),
\end{align}
 where $\bb{C}_a$ is the character of $\frak{so}(2)$-component
 of $\frak{g}^\sigma$ on which $e$ acts as $\sqrt{-1}a$.

\smallskip
\subsection{$(\frak{g},\frak{g}^\sigma)=(\frak{e}_{6(-14)},
\frak{su}(5,1)\oplus\frak{sp}(1,\bb{R}))$}

Let $\alpha_i$ be as in Setting~\ref{e6(-14)}.
We may assume $\sigma=1$ on $\frak{t}$,
 $\sigma=1$ on $\frak{g}_{\alpha_i}$ for $i\neq 2$,
 and $\sigma=-1$ on $\frak{g}_{\alpha_2}$.
We put
\begin{align*}
\beta_1:=\alpha_1,\quad \beta_i:=\alpha_{i+1}\ (2\leq i\leq 5),\quad
\gamma_1:=\alpha_1+2\alpha_2+2\alpha_3+3\alpha_4+2\alpha_5+\alpha_6.
\end{align*}
Then $\beta_i$ form a set of simple roots for $\frak{sl}(6,\bb{C})$
 and $\gamma_1$ is a root for $\frak{sp}(1,\bb{C})$.
Write $\mu_i$ and $\nu_1$ for the fundamental weights corresponding to
 $\beta_i$ and $\gamma_1$, respectively.
Then the restriction $L^{\frak{g}}(3\omega_6)|_{\frak{g}^\sigma}$
 decomposes as 
\begin{align}
L^{\frak{g}}(3\omega_6)|_{\frak{g}^\sigma}
\simeq\bigoplus_{k=0}^{\infty}
L^{\frak{su}(5,1)}(k\mu_3+3\mu_5)\boxtimes
L^{\frak{sp}(1,\bb{R})}((k+3)\nu_1).
\end{align}

\smallskip
\subsection{$(\frak{g},\frak{g}^\sigma)=(\frak{e}_{7(-25)},\frak{e}_{6(-14)}\oplus\frak{so}(2))$}

Let $\alpha_i$ be as in Setting~\ref{e7(-25)}.
We may assume $\sigma=1$ on $\frak{t}$,
 $\sigma=1$ on $\frak{g}_{\alpha_i}$ for $i\neq 1,7$,
 and $\sigma=-1$ on $\frak{g}_{\alpha_i}$ for $i=1,7$.
We put
\begin{align*}  
&\beta_1:=\alpha_6,\quad \beta_2:=\alpha_3,\quad
\beta_3:=\alpha_5,\quad \beta_4:=\alpha_4,\quad \beta_5:=\alpha_2,\\
&\beta_6:=\alpha_1+\alpha_3+\alpha_4+\alpha_5+\alpha_6+\alpha_7.
\end{align*}
Then $\beta_i$ form a set of simple roots for $\frak{e}_{6, \bb{C}}$.
Write $\mu_i$ for the corresponding fundamental weights.
Let $e\in\frak{t}$ be the vector in the $\frak{so}(2)$-component
 of $\frak{g}^\sigma$ such that $\alpha_1(e)=\sqrt{-1}$.
Then the restriction $L^{\frak{g}}(4\omega_7)|_{\frak{g}^\sigma}$
 decomposes as 
\begin{align}
L^{\frak{g}}(4\omega_7)|_{\frak{g}^\sigma}
&\simeq\bigoplus_{k=0}^{\infty}
\Bigl(L^{\frak{e}_{6(-14)}}((k+4)\mu_6)\boxtimes \bb{C}_{k-2}\Bigr)
\oplus\bigoplus_{k=1}^{\infty}
\Bigl(L^{\frak{e}_{6(-14)}}(k\mu_1+4\mu_6)\boxtimes \bb{C}_{-k-2}\Bigr),
\end{align}
 where $\bb{C}_a$ is the character of $\frak{so}(2)$-component
 of $\frak{g}^\sigma$ on which $e$ acts as $\sqrt{-1}a$.

\smallskip
\subsection{$(\frak{g},\frak{g}^\sigma)=(\frak{e}_{7(-25)},
\frak{so}(2,10)\oplus\frak{sp}(1,\bb{R}))$}

Let $\alpha_i$ be as in Setting~\ref{e7(-25)}.
We may assume $\sigma=1$ on $\frak{t}$,
 $\sigma=1$ on $\frak{g}_{\alpha_i}$ for $i\neq 1$,
 and $\sigma=-1$ on $\frak{g}_{\alpha_1}$.
We put
\begin{align*}
\beta_i:=\alpha_{8-i}\ (1\leq i\leq 6),\quad
\gamma_1:=2\alpha_1+2\alpha_2+3\alpha_3+4\alpha_4+3\alpha_5+2\alpha_6+\alpha_7.
\end{align*}
Then $\beta_i$ form a set of simple roots for $\frak{so}(12,\bb{C})$
 and $\gamma_1$ is a root for $\frak{sp}(1,\bb{C})$.
Write $\mu_i$ and $\nu_1$ for the fundamental weights corresponding to
 $\beta_i$ and $\gamma_1$, respectively.
Then the restriction $L^{\frak{g}}(4\omega_7)|_{\frak{g}^\sigma}$
 decomposes as 
\begin{align}
L^{\frak{g}}(4\omega_7)|_{\frak{g}^\sigma}
&\simeq\bigoplus_{k=0}^{\infty}
L^{\frak{so}(2,10)}(4\mu_1+k\mu_5)\boxtimes
L^{\frak{sp}(1,\bb{R})}((k+4)\nu_1).
\end{align}

\smallskip
\subsection{$(\frak{g},\frak{g}^\sigma)=(\frak{e}_{7(-25)},\frak{su}(6,2))$}

Let $\alpha_i$ be as in Setting~\ref{e7(-25)}.
We may assume $\sigma=1$ on $\frak{t}$,
 $\sigma=1$ on $\frak{g}_{\alpha_i}$ for $i\neq 2$,
 and $\sigma=-1$ on $\frak{g}_{\alpha_2}$.
We put
\begin{align*}  
&\beta_1:=\alpha_1,\quad
 \beta_i:=\alpha_{i+1}\ (2\leq i\leq 6),\\
&\beta_7:=\alpha_1+2\alpha_2+2\alpha_3+3\alpha_4+2\alpha_5+\alpha_6.
\end{align*}
Then $\beta_i$ form a set of simple roots for $\frak{sl}(8,\bb{C})$.
Write $\mu_i$ for the corresponding fundamental weights.
Then the restriction $L^{\frak{g}}(4\omega_7)|_{\frak{g}^\sigma}$
 decomposes as 
\begin{align}
L^{\frak{g}}(4\omega_7)|_{\frak{g}^\sigma}
&\simeq\bigoplus_{k=0}^{\infty}
L^{\frak{su}(6,2)}(k\mu_4+4\mu_6).
\end{align}

\smallskip
\subsection{$(\frak{g},\frak{g}^\sigma)=(\frak{e}_{7(-25)},\frak{so}^*(12)\oplus\frak{su}(2))$}

Let $\alpha_i$ be as in Setting~\ref{e7(-25)}.
We may assume $\sigma=1$ on $\frak{t}$,
 $\sigma=1$ on $\frak{g}_{\alpha_i}$ for $i\neq 2,7$,
 and $\sigma=-1$ on $\frak{g}_{\alpha_i}$ for $i=2,7$.
We put
\begin{align*}
&\beta_i:=\alpha_{7-i}\ (1\leq i\leq 4),\quad
\beta_5:=\alpha_1,\quad 
\beta_6:=\alpha_2+\alpha_4+\alpha_5+\alpha_6+\alpha_7,\\
&\gamma_1:=\alpha_1+2\alpha_2+2\alpha_3+3\alpha_4+2\alpha_5+\alpha_6.
\end{align*}
Then $\beta_i$ form a set of simple roots for $\frak{so}(12,\bb{C})$
 and $\gamma_1$ is a root for $\frak{sl}(2,\bb{C})$.
Write $\mu_i$ and $\nu_1$ for the fundamental weights corresponding to
 $\beta_i$ and $\gamma_1$, respectively.
Then the restriction $L^{\frak{g}}(4\omega_7)|_{\frak{g}^\sigma}$
 decomposes as 
\begin{align}
\label{eq:e7sostar}
L^{\frak{g}}(4\omega_7)|_{\frak{g}^\sigma}
&\simeq\bigoplus_{k=0}^{\infty}
L^{\frak{so}^*(12)}(k\mu_5+4\mu_6)\boxtimes
F^{\frak{su}(2)}(k\nu_1).
\end{align}

\section{Zuckerman's derived functor modules}\label{sec:Zuc}
Most unitarizable highest weight modules are isomorphic to
 $A_{\frak{q}}(\lambda)$, also called Zuckerman's derived functor modules.
Let us fix some notation concerning Zuckerman's derived functor modules.  
Let $G$ be a connected reductive Lie group with a Cartan involution $\theta$.
We extend $\theta$ to a $\bb{C}$-linear involution on the complexified Lie algebra $\frak{g}_\bb{C}$ 
 and suppose that $\frak{q}$ is a $\theta$-stable parabolic subalgebra of 
 $\frak{g}_\bb{C}$.
The normalizer $L=N_G(\frak{q})$ of $\frak{q}$ is a connected reductive 
subgroup of $G$.
Hence a unitary character $\bb{C}_\lambda$ of $L$ is determined by 
its differential $\lambda\in\sqrt{-1}\frak{l}^*$. 
Associated to the data $(\frak{q},\lambda)$, one defines 
Zuckerman's derived functor module
 $A_\frak{q}(\lambda)$ as in \cite[(5.6)]{KnVo}.
In our normalization, $A_\frak{q}(0)$ is a unitarizable $(\frak {g}, K)$-module
 with non-zero $(\frak{g},K)$-cohomology,
 and in particular,
 has the same infinitesimal character 
as the trivial one-dimensional representation $\bb{C}$ of $\frak{g}$.

Let $\frak{u}$ be the nilradical of $\frak{q}$, so $\frak{q}=\frak{l}_\bb{C}+\frak{u}$.
Let $\frak{t}$ be a Cartan subalgebra of $\frak{k}$
 and $\frak{h}$ the centralizer of $\frak{t}$ in $\frak{g}$,
 which is a fundamental Cartan subalgebra of $\frak{g}$.
Choose a positive root system $\Delta^+(\frak{l}_\bb{C},\frak{h}_\bb{C})$
 for $\frak{l}$ and put
$\Delta^+(\frak{g}_\bb{C},\frak{h}_\bb{C})
:=\Delta^+(\frak{l}_\bb{C},\frak{h}_\bb{C})
 \cup \Delta(\frak{u},\frak{h}_\bb{C}).$
Denote by $\rho, \rho_{\frak{l}}$, and $\rho(\frak{u})\in\frak{h}_\bb{C}^*$
 half the sum of roots in
 $\Delta^+(\frak{g}_\bb{C},\frak{h}_\bb{C})$,
 $\Delta^+(\frak{l}_\bb{C},\frak{h}_\bb{C})$,
 and $\Delta(\frak{u},\frak{h}_\bb{C})$, respectively.
Let $\langle\cdot,\cdot\rangle$ be an invariant bilinear form 
on $\frak{h}_\bb{C}^*$ 
that is positive definite on the real span of the roots.
Following \cite[Definitions 0.49 and 0.52]{KnVo},
 for a unitary character $\bb{C}_\lambda$ of $L$, 
 we say $\lambda$ is in the {\it good range} if 
\[ \langle \lambda+\rho,\alpha\rangle>0,\qquad 
\alpha\in\Delta(\frak{u},\frak{h}_\bb{C}), \]
and in the {\it weakly fair range} if 
\[ \langle \lambda+\rho(\frak{u}),\alpha\rangle\geq 0,\qquad 
\alpha\in\Delta(\frak{u},\frak{h}_\bb{C}).\]
We state some basic properties of the $({\mathfrak{g}}, K)$-modules 
 $A_{\frak {q}}(\lambda)$.
\begin{thm}{\cite[Theorem 8.2 and Corollary 9.70]{KnVo}}\label{thm:aqfact}
If $\lambda$ is in the weakly fair range, $A_\frak{q}(\lambda)$
is unitarizable or zero.
If $\lambda$ is in the good range, $A_\frak{q}(\lambda)$ 
is non-zero and irreducible.
\end{thm}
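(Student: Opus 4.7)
The plan is to follow the cohomological induction construction underlying the definition of $A_\frak{q}(\lambda)$ in \cite{KnVo}. Recall that $A_\frak{q}(\lambda)$ is built in the middle degree $S=\dim(\frak{u}\cap\frak{k})$ from the one-dimensional $(\frak{l},L\cap K)$-module $\bb{C}_\lambda$ extended to $\frak{q}$ by letting $\frak{u}$ act trivially, via the Bernstein (or equivalently Zuckerman) functor. The argument proceeds in three stages: first set up a Blattner-type $K$-spectrum formula and identify the infinitesimal character as $\lambda+\rho$; then treat the good range via vanishing and irreducibility theorems; finally bootstrap unitarity in the weakly fair range from a signature theorem on the minimal $K$-type.

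For the good-range statement, I would first invoke Zuckerman's vanishing theorem to conclude that cohomological induction is exact and concentrated in degree $S$ as soon as $\langle\lambda+\rho,\alpha\rangle>0$ for all $\alpha\in\Delta(\frak{u},\frak{h}_\bb{C})$. Non-vanishing of $A_\frak{q}(\lambda)$ then follows from the Blattner formula: the ``bottom-layer'' $K$-type built from $\bb{C}_\lambda\otimes\wedge^{\textrm{top}}(\frak{u}\cap\frak{p})$ occurs with multiplicity one. For irreducibility, my plan is to apply Vogan's bottom-layer principle, which identifies $A_\frak{q}(\lambda)$ with the irreducible Langlands subquotient attached to these data; the reduction to the regular integral case is then carried out via the Jantzen--Zuckerman translation principle, at which point irreducibility is classical.

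The weakly fair range is the deep content. Here my plan is to construct the invariant Hermitian form as the Shapovalov-type pairing naturally produced by the cohomological induction functor, and then apply Vogan's signature theorem: the signature on every $K$-type is controlled by the signature on the bottom layer, which is manifestly positive because it inherits a definite form from the unitary character $\bb{C}_\lambda$. The hard part will be the boundary locus where some inequalities $\langle\lambda+\rho(\frak{u}),\alpha\rangle\geq 0$ degenerate to equalities: there $A_\frak{q}(\lambda)$ may become reducible or even vanish, and preventing an indefinite signature requires combining a Parthasarathy--Dirac operator estimate with a careful spectral sequence argument that tracks how the invariant form transforms under the Bernstein functor through each step of the filtration. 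This is precisely the main obstacle, and is essentially the content of \cite[Chapters 8--9]{KnVo}; the ``weakly fair'' hypothesis is tight in exactly the sense that it is the weakest positivity condition under which every contribution to the signature on the bottom layer propagates coherently to the whole module.
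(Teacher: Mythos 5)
The paper does not prove this statement; it is quoted verbatim from Knapp--Vogan \cite[Theorem 8.2 and Corollary 9.70]{KnVo}, so there is no internal argument to compare against. Your outline correctly identifies the standard proof strategy from that reference (vanishing in degrees other than $S$, non-vanishing and irreducibility via the bottom-layer $K$-types and translation in the good range, and the signature theorem for the induced Hermitian form in the weakly fair range), but it remains a roadmap that defers all of the substantive work to the same source the paper cites.
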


We identify infinitesimal characters for $\frak{g}_\bb{C}$ with Weyl group orbits in $\frak{h}_\bb{C}^*$ and their representatives via the Harish-Chandra homomorphism.

\begin{thm}[{\cite[\S V.2]{KnVo}}]\label{thm:infchar}
$A_\frak{q}(\lambda)$ has the infinitesimal character $\lambda+\rho$.
\end{thm}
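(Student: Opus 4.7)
The plan is to deduce the infinitesimal character of $A_{\frak q}(\lambda)$ from the known behavior of cohomological induction on Harish-Chandra's $\mathcal Z(\frak g_{\bb C})$-action, combined with the specific normalization built into \cite[(5.6)]{KnVo}.

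First I would recall that, in the Knapp--Vogan setup, $A_{\frak q}(\lambda)$ is defined as $(\mathcal R^{\frak q}_{\frak g,L\cap K})^S$ applied to the one-dimensional $(\frak q, L\cap K)$-module $\bb{C}_\lambda\otimes\wedge^{\text{top}}\frak{u}$, where $S=\dim(\frak u\cap\frak k)$ and the twist by $\wedge^{\text{top}}\frak u$ is precisely the shift that makes the ``$\rho$-normalization'' of the functor match the naive one. Under this convention, the cohomological induction functor $\mathcal R^{\frak q}_{\frak g,L\cap K}$ intertwines the $\mathcal Z(\frak l_{\bb C})$-action on the input with the $\mathcal Z(\frak g_{\bb C})$-action on the output via the unnormalized Harish-Chandra homomorphism associated to the Borel $\frak h_{\bb C}\subset\frak b_{\frak l}\subset \frak b_{\frak g}$ obtained from $\Delta^+(\frak l_{\bb C},\frak h_{\bb C})\cup\Delta(\frak u,\frak h_{\bb C})$; equivalently, if the input has infinitesimal character $\nu\in\frak h_{\bb C}^*$ (in the Harish-Chandra parametrization for $\frak l$), then the output has infinitesimal character $\nu\in\frak h_{\bb C}^*$ (in the Harish-Chandra parametrization for $\frak g$). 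This is the content of \cite[Corollary 5.25]{KnVo} and is the heart of the matter.

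Next I would compute the infinitesimal character of the input. Since $\bb{C}_\lambda$ is one-dimensional, its infinitesimal character as an $\frak l$-module, via the Harish-Chandra isomorphism for $\frak l$ with respect to $\Delta^+(\frak l_{\bb C},\frak h_{\bb C})$, is represented by $\lambda+\rho_{\frak l}\in\frak h_{\bb C}^*$. Therefore, by the previous paragraph, $A_{\frak q}(\lambda)$ has infinitesimal character $\lambda+\rho_{\frak l}\in\frak h_{\bb C}^*$ in the Harish-Chandra parametrization for $\frak g$ attached to the positive system $\Delta^+(\frak l_{\bb C},\frak h_{\bb C})\cup\Delta(\frak u,\frak h_{\bb C})$. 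Re-expressing this in terms of the chosen positive system of $\Delta(\frak g_{\bb C},\frak h_{\bb C})$ amounts to adding $\rho(\frak u)$, yielding the infinitesimal character $\lambda+\rho_{\frak l}+\rho(\frak u)=\lambda+\rho$, as claimed.

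The main subtlety, and essentially the only obstacle, is bookkeeping the normalization: the $\wedge^{\text{top}}\frak u$ twist in the definition of $A_{\frak q}(\lambda)$ shifts the naive answer $\lambda+\rho_{\frak l}-\rho(\frak u)$ that one would get from a purely $\frak l$-module computation into $\lambda+\rho$. Thus the whole argument reduces to verifying that the conventions in \cite[(5.6)]{KnVo} are precisely the ones that absorb this shift, which is exactly the content of the cited \cite[\S V.2]{KnVo}.
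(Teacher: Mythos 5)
Your sketch is correct and is essentially the argument behind the cited result: the paper offers no proof of this theorem but quotes it directly from Knapp--Vogan \S V.2, and your derivation (the input $\bb{C}_\lambda$ has $\frak{l}$-infinitesimal character $\lambda+\rho_{\frak{l}}$, and the normalized cohomological induction functor defining $A_{\frak{q}}(\lambda)$ shifts the Harish-Chandra parameter by $\rho(\frak{u})$, which is \cite[Corollary 5.25]{KnVo}) is exactly how that reference establishes it. One small point of bookkeeping: the $\rho(\frak{u})$ shift is part of the statement of Corollary 5.25 itself --- the infinitesimal character is a Weyl-group orbit and does not depend on the choice of positive system, so it is cleaner to quote the shift directly rather than attribute it to ``re-expressing in terms of $\Delta^+(\frak{l}_{\bb{C}},\frak{h}_{\bb{C}})\cup\Delta(\frak{u},\frak{h}_{\bb{C}})$'' --- but your net computation $\lambda+\rho_{\frak{l}}+\rho(\frak{u})=\lambda+\rho$ is right.
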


The $K$-type decomposition of $A_\frak{q}(\lambda)$,
 namely, the branching of the restriction of $A_\frak{q}(\lambda)$
 to $K$ is known as the generalized Blattner formula.
Choose a positive root system $\Delta^+(\frak{l}_\bb{C}\cap\frak{k}_\bb{C},\frak{t}_\bb{C})$ and set
$\Delta^+(\frak{k}_\bb{C},\frak{t}_\bb{C})
:=\Delta^+(\frak{l}_\bb{C}\cap\frak{k}_\bb{C},\frak{t}_\bb{C})
 \cup \Delta(\frak{u}\cap\frak{k}_\bb{C},\frak{t}_\bb{C})$
as a positive system for $\frak{k}$.
For a dominant integral weight $\mu\in\frak{t}_\bb{C}^*$ for $L\cap K$, 
 let $m(\mu)$ be the multiplicity of
 $F^{\frak{l}\cap\frak{k}}(\mu)$ in
 $S(\frak{u}\cap \frak{p}_\bb{C})
 \otimes  \bb{C}_{\lambda+2\rho(\frak{u}\cap\frak{p}_\bb{C})}$.
Write $\rho_K$ for half the sum of positive roots in $\frak{k}_\bb{C}$.
Then we have the following $K$-type formula:

\begin{thm}[{\cite[Equation (5.108a)]{KnVo}}]
\label{Blattner}
For weakly fair $\lambda$
\begin{align*}
\dim {\Hom}_{K}\bigl(A_\frak{q}(\lambda), F^\frak{k}(\mu) \bigr)
=\sum_{w} (-1)^{l(w)} m(w(\mu+\rho_K)-\rho_K),
\end{align*}
where the sum runs over the elements $w$ of the Weyl group of $K$
 such that $w(\mu+\rho_K)-\rho_K$
 is dominant for the positive system $\Delta^+(\frak{l}_\bb{C}\cap \frak{k}_\bb{C},\frak{t}_\bb{C})$
 and $l(w)$ denotes the length of $w$.
\end{thm}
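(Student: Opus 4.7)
This formula is Vogan's generalized Blattner formula (Equation~(5.108a) in \cite{KnVo}); the plan is to reproduce the cohomological-induction argument in three main steps. First, realize $A_\frak{q}(\lambda)$ as $(\mathcal{R}^\frak{q})^S(\bb{C}_\lambda)$, where $S=\dim(\frak{u}\cap\frak{k}_\bb{C})$ and $\mathcal{R}^\frak{q}$ is Zuckerman's cohomological induction functor from $(\frak{l},L\cap K)$ to $(\frak{g},K)$. This allows the $K$-structure of $A_\frak{q}(\lambda)$ to be studied through the derived functors of production from $L\cap K$-modules to $K$-modules.

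Next, invoke the vanishing theorem of \cite[Theorem~8.29]{KnVo}: under the weakly fair hypothesis the higher derived functors $(\mathcal{R}^\frak{q})^j(\bb{C}_\lambda)$ vanish for $j\neq S$, so the multiplicity $\dim\Hom_K(A_\frak{q}(\lambda),F^\frak{k}(\mu))$ becomes an Euler characteristic that can be computed by any $(\frak{q},L\cap K)$-projective resolution of $\bb{C}_\lambda$. Applying the standard Koszul resolution of $\bb{C}$ over $U(\frak{u})$ and cancelling the $\frak{u}\cap\frak{k}_\bb{C}$ contributions against the normalization shift $\bb{C}_{2\rho(\frak{u})}$ built into $\mathcal{R}^\frak{q}$ yields the Grothendieck-group identity
\begin{equation*}
A_\frak{q}(\lambda)|_K
= \mathrm{Ind}_{L\cap K}^{K}\bigl(S(\frak{u}\cap\frak{p}_\bb{C})\otimes\bb{C}_{\lambda+2\rho(\frak{u}\cap\frak{p}_\bb{C})}\bigr),
\end{equation*}
whose $L\cap K$-multiplicities on the right-hand side are precisely the integers $m(\cdot)$ defined before the statement.

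Finally, combine Frobenius reciprocity for the inclusion $L\cap K\hookrightarrow K$ with the Weyl character formula for $K$ relative to $\Delta^+(\frak{k}_\bb{C},\frak{t}_\bb{C})$. Writing the character of $F^\frak{k}(\mu)$ as an alternating sum over the Weyl group of $K$ and pairing with the $(L\cap K)$-character of $S(\frak{u}\cap\frak{p}_\bb{C})\otimes\bb{C}_{\lambda+2\rho(\frak{u}\cap\frak{p}_\bb{C})}$, one isolates contributions from weights of the form $w(\mu+\rho_K)-\rho_K$. Factoring out the Weyl group of $L\cap K$ leaves precisely those $w$ for which $w(\mu+\rho_K)-\rho_K$ is $\Delta^+(\frak{l}_\bb{C}\cap\frak{k}_\bb{C},\frak{t}_\bb{C})$-dominant, together with the sign $(-1)^{l(w)}$ coming from the Weyl denominator.

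The main obstacle is the vanishing input in the second step: proving $(\mathcal{R}^\frak{q})^j(\bb{C}_\lambda)=0$ for $j\neq S$ in the weakly fair range is the deep technical result underlying the whole theory, essentially on par with the unitarizability statement in Theorem~\ref{thm:aqfact}. Without it, the alternating Euler-characteristic computation in Step~3 would only give the virtual $K$-character, and one could not conclude that each $(-1)^{l(w)}m(w(\mu+\rho_K)-\rho_K)$ assembles into an honest multiplicity on the left-hand side.
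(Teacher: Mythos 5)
The paper does not prove this statement at all---it is quoted directly from Knapp--Vogan \cite[Equation (5.108a)]{KnVo}---so there is no in-paper argument to compare against. Your outline is a correct reproduction of the standard proof from that reference: the Koszul-resolution/Euler-characteristic computation reducing $A_{\frak{q}}(\lambda)|_K$ to the $(L\cap K)$-module $S(\frak{u}\cap\frak{p}_\bb{C})\otimes\bb{C}_{\lambda+2\rho(\frak{u}\cap\frak{p}_\bb{C})}$, followed by Bott--Borel--Weil/Kostant at the level of $K$ to produce the signed sum over Weyl group elements $w$ with $w(\mu+\rho_K)-\rho_K$ dominant for $\Delta^+(\frak{l}_\bb{C}\cap\frak{k}_\bb{C},\frak{t}_\bb{C})$, with the vanishing of $({\cal R}^{\frak{q}})^j(\bb{C}_\lambda)$ for $j\neq S$ in the weakly fair range (available here precisely because $\bb{C}_\lambda$ is one-dimensional) correctly singled out as the deep input that upgrades the virtual $K$-character identity to an honest multiplicity formula.
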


\begin{rem}\label{rem:ktype}
Suppose that any weight $\mu$ with $m(\mu)\neq 0$
 is dominant for $\Delta^+(\frak{k}_\bb{C},\frak{t}_\bb{C})$.
Then it follows from the proof of \cite[Theorem 5.64]{KnVo} that 
\begin{align*}
\dim {\Hom}_{K}\bigl(A_\frak{q}(\lambda), F^\frak{k}(\mu) \bigr)
= m(\mu)
\end{align*}
even if $\lambda$ fails to be weakly fair.
\end{rem}

For particular $\frak{q}$, Zuckerman's modules
 $A_\frak{q}(\lambda)$ become highest weight modules.

\begin{de}
Suppose that $\frak{g}$ is a reductive Lie algebra of Hermitian type.
We say a $\theta$-stable parabolic subalgebra $\frak{q}$ of $\frak{g}_\bb{C}$
 is {\it holomorphic} if $\frak{q}\supset \frak{p}_-$. 
\end{de}

For holomorphic $\frak{q}$, 
 it follows that $\frak{u}\cap\frak{p}_\bb{C}\subset \frak{p}_-$ and
 hence all the eigenvalues of $\ad(z')$ in
 $\frak{u}\cap\frak{p}_\bb{C}$ are negative.
Therefore, Theorem \ref{Blattner} implies that each irreducible
 constituent of $A_\frak{q}(\lambda)$
 is a highest weight module.

The following proposition will be useful to see the irreducibility
 of $A_\frak{q}(\lambda)$.

\begin{prop}[{\cite[Theorem 8.31 and Proposition 8.75]{KnVo}}]
\label{abelnil}
For a $\theta$-stable parabolic subalgebra $\frak{q}$
 suppose that the nilpotent radical $\frak{u}$ is abelian.
Then $A_\frak{q}(\lambda)$ is irreducible or zero
 for weakly fair $\lambda$.
\end{prop}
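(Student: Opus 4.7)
The plan is to combine the good-range irreducibility already granted by Theorem~\ref{thm:aqfact} with a translation principle argument in which the abelian hypothesis on $\frak{u}$ plays the decisive role. Concretely, for $\lambda$ in the good range, $A_\frak{q}(\lambda)$ is nonzero and irreducible, so the remaining task is to propagate this conclusion to arbitrary weakly fair $\lambda$.

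The first step is to pick a good-range weight $\lambda_0$ and a Jantzen--Zuckerman translation functor $\psi$ such that $\psi(A_\frak{q}(\lambda_0)) \cong A_\frak{q}(\lambda)$ (possibly zero). Because cohomological induction $\mathcal{R}^S$ intertwines translation on $L$-characters with translation on $(\frak{g},K)$-modules, such a $\psi$ exists whenever $\lambda - \lambda_0$ comes from a finite-dimensional $\frak{g}$-representation; by choosing $\lambda_0$ far enough in the good direction, this is always arranged. Translation functors preserve irreducibility outside of wall-crossings, so the issue reduces to analyzing what can happen to $A_\frak{q}(\lambda_0)$ as the parameter is translated toward a weakly fair $\lambda$ along the root hyperplanes orthogonal to $\Delta(\frak{u},\frak{h}_\bb{C})$.

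The second and critical step is to use that $\frak{u}$ is abelian. The abelian hypothesis implies that no sum of two roots in $\Delta(\frak{u},\frak{h}_\bb{C})$ lies again in $\Delta(\frak{u},\frak{h}_\bb{C})$, so the roots appearing in $\frak{u}$ form a strongly orthogonal-like system under the $L$-action. This rigidity can be exploited in two complementary ways. On the one hand, the generalized Verma realization
\begin{equation*}
N_\frak{q}(\lambda) \;=\; U(\frak{g}_\bb{C}) \otimes_{U(\frak{q})} \bb{C}_\lambda \;\cong\; S(\frak{u}^-) \otimes \bb{C}_\lambda \quad (\text{as } \frak{l}\text{-module})
\end{equation*}
has an $\frak{l}$-module structure that is entirely controlled by the symmetric algebra on the abelian piece $\frak{u}^-$, and its unique irreducible quotient can be tracked through the translation. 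On the other hand, one can analyze the obstructions to irreducibility that live in Vogan's signature computation for $\mathcal{R}^S$ and show that, under the abelian assumption, the only degeneration along the weakly fair locus is the module becoming zero, never becoming reducible.

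The main obstacle is this last point: making precise that abelian $\frak{u}$ rules out every potential reducibility wall inside the weakly fair range. The combinatorial input (absence of root strings of length $\geq 2$ inside $\Delta(\frak{u},\frak{h}_\bb{C})$) must be married with the spectral sequence computing $\mathcal{R}^S$, and one must verify that the differentials which could produce nontrivial extensions vanish identically. Once this wall analysis is in place, the proposition follows: $A_\frak{q}(\lambda)$ in the weakly fair range is obtained from a good-range irreducible by a translation functor that cannot introduce reducibility, so it is irreducible or zero, as claimed.
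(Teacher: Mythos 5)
You should first note that the paper does not prove this proposition at all: it is imported verbatim from Knapp--Vogan (\cite[Theorem 8.31 and Proposition 8.75]{KnVo}), so there is no internal argument to compare yours against. Judged on its own terms, your proposal has a genuine gap, and it sits exactly where you yourself flag ``the main obstacle.'' The entire content of the proposition is the claim that passing from a good-range parameter $\lambda_0$ to a weakly fair $\lambda$ cannot produce a nonzero reducible module when $\frak{u}$ is abelian; your write-up asserts that ``one must verify that the differentials which could produce nontrivial extensions vanish identically'' and that the ``wall analysis'' must be ``in place,'' but never carries out either verification. Asserting that the decisive step can be done is not the same as doing it, so what you have is a proof plan, not a proof.

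Two of the intermediate reductions are also shakier than you present them. First, ``translation functors preserve irreducibility outside of wall-crossings'' does not reduce the problem the way you claim: since $\rho=\rho_{\frak{l}}+\rho(\frak{u})$ and $\langle\rho_{\frak{l}},\alpha\rangle$ can be negative for $\alpha\in\Delta(\frak{u},\frak{h}_{\bb{C}})$, a weakly fair $\lambda$ (where only $\langle\lambda+\rho(\frak{u}),\alpha\rangle\geq0$ is required) can have $\langle\lambda+\rho,\alpha\rangle$ strictly negative, i.e.\ the infinitesimal character lies genuinely \emph{beyond} the walls bounding the good chamber, not merely on them. Translation across walls, as opposed to onto them, does not generically preserve irreducibility --- indeed the paper's own Remark~\ref{rem:reducibleaq} exhibits reducible $A_{\frak{q}}(\lambda)$'s once one leaves the weakly fair range, so the conclusion really does hinge on a nontrivial interaction between the abelian hypothesis and the weakly fair inequality, not on soft translation formalism. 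Second, the claim that abelian $\frak{u}$ makes $\Delta(\frak{u},\frak{h}_{\bb{C}})$ ``a strongly orthogonal-like system'' is wrong as stated: abelian $\frak{u}$ means no \emph{sum} of two roots of $\frak{u}$ is a root, but differences of such roots are typically roots of $\frak{l}$, so the system is far from orthogonal, and the rigidity you want to extract from it has to be argued differently. If you want an actual proof rather than a citation, you should follow Knapp--Vogan's route: establish their Irreducibility Theorem and then check that, for a one-dimensional $\lambda$ with $\frak{u}$ abelian, the weakly fair condition implies its hypotheses.
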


Let $\frak{g}$ be a simple Lie algebra of Hermitian type 
 and choose simple roots as in Section~\ref{sec:pre}.
We write $\frak{q}(i)$ for the maximal parabolic subalgebra
 of $\frak{g}_\bb{C}$ corresponding to $\alpha_i$.
The minimal holomorphic representation for simple $\frak{g}$ is
 isomorphic to $A_\frak{q}(\lambda)$ with $\lambda$ in the weakly
 fair range if $\frak{g}=\frak{su}(m,n)$,
 $\frak{so}(2,2n)$ or $\frak{so}^*(2n)$.
In fact, we have
\begin{align*}
L^{\frak{su}(m,n)}(\omega_m) &\simeq A_{\frak{q}(1)}(-n\omega_1)
 \simeq A_{\frak{q}(m+n-1)}(-m\omega_{m+n-1}),\\
L^{\frak{so}(2,2n)}((n-1)\omega_1) &\simeq A_{\frak{q}(n)}(-2\omega_n)
 \simeq A_{\frak{q}(n+1)}(-2\omega_{n+1}),\\
L^{\frak{so}^*(2n)}(2\omega_n) &\simeq A_{\frak{q}(1)}(-(n-2)\omega_1).
\end{align*}

On the other hand, if $\frak{g}=\frak{so}(2,2n+1)$ for $(n\geq 1)$, $\frak{sp}(n,\bb{R})$ for $(n\geq 2)$, $\frak{e}_{6(-14)}$ or $\frak{e}_{7(-25)}$, the minimal holomorphic representation cannot be isomorphic to any $A_\frak{q}(\lambda)$, because the modules $A_\frak{q}(\lambda)$ do not attain the minimal Gelfand--Kirillov dimension.

\section{Proofs for holomorphic symmetric pairs}\label{sec:proofhol}

We now present three methods to prove the formulas in Section~\ref{sec:branchhol}. They are explained in the following three subsections \ref{subsec:dualpair}--\ref{subsec:Fock}.
Table~\ref{table:proof} shows which case can be treated with which of the three methods. We remark that, to avoid lengthy calculations, we do not prove each single branching law, but only demonstrate the three methods in some examples.

\begin{table}[!h]
\begin{center}
\begin{tabular}{ccccc}
\hline
$\qquad\frak{g}\qquad$&$\qquad\frak{g}^\sigma\qquad$
&dual pair&$A_\frak{q}(\lambda)$&Fock model\\
&
&\ref{subsec:dualpair}&\ref{subsec:aqlambda}&\ref{subsec:Fock}\\
\hline

$\frak{su}(m,n)$&$\frak{su}(p,q)\oplus\frak{su}(m-p,n-q)\oplus\frak{u}(1)$&
 $\bigcirc$&$\bigcirc$&\\
\hline

$\frak{su}(n,n)$&$\frak{so}^*(2n)$&
 $\bigcirc$&$\bigcirc$&\\
\hline

$\frak{su}(n,n)$&$\frak{sp}(n,\bb{R})$&
 $\bigcirc$&&\\
\hline

$\frak{so}(2,2n)$&$\frak{u}(1,n)$&
 &$\bigcirc$&\\
\hline

$\frak{so}(2,2n)$&$\frak{so}(2,m)\oplus\frak{so}(2n-m)$&
 &&$\bigcirc$\\
\hline

$\frak{so}(2,2n+1)$&$\frak{so}(2,m)\oplus\frak{so}(2n-m+1)$&
 &&$\bigcirc$\\
\hline

$\frak{so}^*(2n)$&$\frak{su}(m,n-m)\oplus\frak{u}(1)$&
 $\bigcirc$&$\bigcirc$&\\
\hline

$\frak{so}^*(2n)$&$\frak{so}^*(2m)\oplus\frak{so}^*(2n-2m)$&
 $\bigcirc$&$\bigcirc$&\\
\hline

$\frak{sp}(n,\bb{R})$&$\frak{su}(m,n-m)\oplus\frak{u}(1)$&
 $\bigcirc$&$\bigcirc$&\\
\hline

$\frak{sp}(n,\bb{R})$&$\frak{sp}(m,\bb{R})\oplus\frak{sp}(n-m,\bb{R})$&
 $\bigcirc$&&\\
\hline

$\frak{e}_{6(-14)}$&$\frak{so}(2,8)\oplus\frak{so}(2)$&
 &$\bigcirc$&\\
\hline

$\frak{e}_{6(-14)}$&$\frak{su}(4,2)\oplus\frak{su}(2)$&
 &$\bigcirc$&\\
\hline

$\frak{e}_{6(-14)}$&$\frak{so}^*(10)\oplus\frak{so}(2)$&
 &$\bigcirc$&\\
\hline

$\frak{e}_{6(-14)}$&$\frak{su}(5,1)\oplus\frak{sp}(1,\bb{R})$&
 &$\bigcirc$&\\
\hline

$\frak{e}_{7(-25)}$&$\frak{e}_{6(-14)}\oplus\frak{so}(2)$&
 &$\bigcirc$&\\
\hline

$\frak{e}_{7(-25)}$&$\frak{so}(2,10)\oplus\frak{sp}(1,\bb{R})$&
 &$\bigcirc$&\\
\hline

$\frak{e}_{7(-25)}$&$\frak{su}(6,2)$&
 &$\bigcirc$&\\
\hline

$\frak{e}_{7(-25)}$&$\frak{so}^*(12)\oplus\frak{su}(2)$&
 &$\bigcirc$&\\
\hline

\end{tabular}
\end{center}
\caption{Methods of proof for the branching laws for holomorphic symmetric pairs}
\label{table:proof}
\end{table}

\subsection{Seesaw dual pairs}\label{subsec:dualpair}
Some of the minimal holomorphic representations are isomorphic
 to the theta lift of a one-dimensional representation of a compact group.
In such cases, we can obtain branching laws by using the dual pair correspondence.
For details we refer the reader to \cite{Ad,How,Kud}.

Let $(G_1,H_1)$ be a dual pair of reductive groups in $Sp(N,\bb{R})$
 and suppose that $H_1$ is compact. Up to taking direct products, the
 possible dual pairs of this nature are
\begin{itemize}
\item $Sp(n,\bb{R}), O(m)\subset Sp(mn,\bb{R})$,
\item $U(m,n), U(p)\subset Sp((m+n)p,\bb{R})$,
\item $O^*(2n), Sp(m)\subset Sp(2mn,\bb{R})$.
\end{itemize}
 
The double covering groups $\widetilde{G_1}$ and $\widetilde{H_1}$ of 
 $G_1$ and $H_1$ in the metaplectic group $\widetilde{Sp(N,\bb{R})}$
 commute with each other.
We choose a Cartan involution of $\widetilde{Sp(N,\bb{R})}$ which induces
 a Cartan involution of $\widetilde{G_1}$ and 
 write $\widetilde{K_1}$ for the corresponding maximal compact subgroup.
Let $\omega$ be the Harish-Chandra module of
 the oscillator representation of the metaplectic group
 $\widetilde{Sp(N,\bb{R})}$.
Let ${\cal R}(\widetilde{H_1}, \omega)$ be the set of
 equivalence classes of irreducible representations of $\widetilde{H_1}$
 which occur in the restriction $\omega|_{\widetilde{H_1}}$.
Then $\omega$ is discretely decomposable as a
 $(\frak{g}_1\oplus\frak{h}_1,\widetilde{K_1}\times \widetilde{H_1})$-module:
\begin{align*}
\omega|_{(\frak{g}_1\oplus\frak{h}_1,\widetilde{K_1}\times \widetilde{H_1})}
 \simeq  \bigoplus_{\pi\in{\cal R}(\widetilde{H_1},\omega)}
 \theta(\pi)\boxtimes \pi.
\end{align*}
The $(\frak{g}_1,\widetilde{K_1})$-module $\theta(\pi)$ is irreducible by \cite{How} and called the local theta lift of $\pi$.
For the three irreducible reductive dual pairs above explicit correspondences
 $\theta(\pi)\leftrightarrow \pi$ are given in \cite{Ad}.
Write $y\in \widetilde{Sp(N,\bb{R})}$ for the element
 that is not equal to the identity element and mapped to the identity
 by the covering map $\widetilde{Sp(N,\bb{R})}\to Sp(N,\bb{R})$.
We say a representation $\pi$ of $\widetilde{H_1}$ is genuine if $\pi(y)=-1$.
Since $\omega(y)=-1$, $\pi\in{\cal R}(\widetilde{H_1},\omega)$
 implies that $\pi$ is genuine.

Let $(G_2,H_2)$ be another reductive dual pair in $Sp(N,\bb{R})$
 such that $G_1\supset G_2$, $H_1\subset H_2$, and $H_2$ is compact.
Such reductive dual pairs $(G_1,H_1)$ and $(G_2,H_2)$ are called seesaw dual pairs (\cite{Kud}):
\[
\xymatrix{
G_1 \ar@{-}[d] \ar@{-}[dr] & H_2 \ar@{-}[d] \ar@{-}[dl] \\
G_2  & H_1 
}
\]
Then the restriction 
 $\omega|_{(\frak{g}_2\oplus\frak{h}_1,\widetilde{K_2}\times \widetilde{H_1})}$
 can be written in two different ways:
\begin{align*}
\omega|_{(\frak{g}_2\oplus\frak{h}_1,\widetilde{K_2}\times \widetilde{H_1})}
 \simeq  \bigoplus_{\pi\in{\cal R}(\widetilde{H_1},\omega)}
 \theta(\pi)|_{(\frak{g}_2,\widetilde{K_2})}\boxtimes \pi
 \simeq  \bigoplus_{\rho\in{\cal R}(\widetilde{H_2},\omega)}
 \theta(\rho)\boxtimes \rho|_{\widetilde{H_1}}.
\end{align*}
We therefore get
\begin{align*}
\theta(\pi)|_{(\frak{g}_2,\widetilde{K_2})}
\simeq  \bigoplus_{\rho\in{\cal R}(\widetilde{H_2},\omega)}
 \theta(\rho)^{\oplus m(\pi,\rho)},
\end{align*}
where $m(\pi,\rho):=
 \dim\Hom_{\widetilde{H_1}}(\pi,\rho|_{\widetilde{H_1}})$.
 
This observation can be used to find explicit branching laws for minimal holomorphic representations.
We illustrate this technique in two cases.

\subsubsection{$(\frak{g},\frak{g}^\sigma)=(\frak{u}(m,n),\frak{u}(p,q)\oplus\frak{u}(m-p,n-q))$}
Choose a standard basis $\epsilon_1,\dots,\epsilon_{m+n} \in \frak{t}_\bb{C}^*$
so that 
\begin{align*}
\Delta^+(\frak{k}_\bb{C},\frak{t}_\bb{C})
 &=\{\epsilon_i-\epsilon_j\}_{1\leq i < j  \leq m}
 \cup\{\epsilon_{m+i}-\epsilon_{m+j}\}_{1\leq i < j \leq n},\\
\Delta(\frak{p}_+,\frak{t}_\bb{C})
 &=\{\epsilon_i-\epsilon_{m+j}\}
 _{1\leq i\leq m,\;1\leq j \leq n}.
\end{align*}
We may assume that $\frak{t}\subset \frak{g}^\sigma$ and
\begin{align*}
\Delta^+(\frak{k}^\sigma_\bb{C},\frak{t}_\bb{C})
 ={}& \{\epsilon_i-\epsilon_j\}_{1\leq i < j  \leq p}
 \cup\{\epsilon_{p+i}-\epsilon_{p+j}\}_{1\leq i < j \leq m-p}\\
& \cup\{\epsilon_{m+i}-\epsilon_{m+j}\}_{1\leq i < j \leq n-q}
 \cup\{\epsilon_{m+n-q+i}-\epsilon_{m+n-q+j}\}_{1\leq i < j \leq q},\\
\Delta(\frak{p}^\sigma_+,\frak{t}_\bb{C})
 ={}& \{\epsilon_i-\epsilon_{m+n-q+j}\} _{1\leq i\leq p,\;1\leq j \leq q}
 \cup\{\epsilon_{p+i}-\epsilon_{m+j}\}_{1\leq i\leq m-p,\;1\leq j \leq n-q}.
\end{align*}
Put $G_1:=U(m,n)$, $H_1:=U(1)$, $G_2:=U(p,q)\times U(m-p,n-q)$,
 and $H_2:=U(1)\times U(1)$
 such that $(G_1,H_1)$ and $(G_2,H_2)$ form seesaw dual pairs in $Sp(m+n,\bb{R})$.
The covering group $\widetilde{H_1}$ splits if and only if $m+n$ is even.
In any case a representation $\pi$ of $\widetilde{H_1}$ is determined by 
 the Lie algebra action provided that $\pi$ is genuine.
Hence the genuine representations of $\widetilde{H_1}$ are given
 by $\det^{\frac{m+n}{2}+k}$ for $k\in \bb{Z}$.
We have 
\begin{align*}
\theta({\det}^{\frac{m-n}{2}})\simeq
 L^{\frak{u}(m,n)}\Bigl(\frac{1}{2}
 \bigl((\epsilon_1+\cdots+\epsilon_m)
 -(\epsilon_{m+1}+\cdots+\epsilon_{m+n})\bigr)\Bigr).
\end{align*}

Suppose first that $p,q,m-p,n-q\geq 1$.
Then a genuine representation ${\det}^a\boxtimes {\det}^b$ of
 $\widetilde{H_2}$ belongs to ${\cal R}(\widetilde{H_2},\omega)$
 if and only if $a-\frac{p+q}{2}\in \bb{Z}$ and
 $b-\frac{m+n-p-q}{2}\in \bb{Z}$.
Since $H_1$ is diagonally embedded in $H_2$, we have
\begin{align*}
\Hom_{\widetilde{H_1}}
 ({\det}^{\frac{m-n}{2}}, ({\det}^a\boxtimes {\det}^b)|_{\widetilde{H_1}})
 \simeq \begin{cases} \bb{C} & \text{if $a+b=\frac{m-n}{2}$,} \\
 0 & \text{otherwise.} \end{cases}
\end{align*}
For ${\det}^{\frac{p-q}{2}-k}\boxtimes {\det}^{\frac{(m-p)-(n-q)}{2}+k}\in
 {\cal R}(\widetilde{H_2},\omega)$, we have
\begin{align*}
&\theta({\det}^{\frac{p-q}{2}-k}\boxtimes {\det}^{\frac{(m-p)-(n-q)}{2}+k})\\
&\quad\simeq 
 L^{\frak{u}(p,q)}\Bigl(-k\epsilon_{m+n}+
 \frac{1}{2}\bigl((\epsilon_1+\cdots+\epsilon_p)
 -(\epsilon_{m+n-q+1}+\cdots+\epsilon_{m+n})\bigr)\Bigr)\\
&\qquad\boxtimes  L^{\frak{u}(m-p,n-q)}\Bigl(k\epsilon_{p+1}+
 \frac{1}{2}\bigl((\epsilon_{p+1}+\cdots+\epsilon_{m})
 -(\epsilon_{m+1}+\cdots+\epsilon_{m+n-q})\bigr)\Bigr)
\end{align*}
if $k\geq 0$ and 
\begin{align*}
&\theta({\det}^{\frac{p-q}{2}-k}\boxtimes {\det}^{\frac{(m-p)-(n-q)}{2}+k})\\
&\quad\simeq 
 L^{\frak{u}(p,q)}\Bigl(-k\epsilon_1+
 \frac{1}{2}\bigl((\epsilon_1+\cdots+\epsilon_p)
 -(\epsilon_{m+n-q+1}+\cdots+\epsilon_{m+n})\bigr)\Bigr)\\
&\qquad \boxtimes  L^{\frak{u}(m-p,n-q)}\Bigl(k\epsilon_{m+n-q}+
 \frac{1}{2}\bigl((\epsilon_{p+1}+\cdots+\epsilon_{m})
 -(\epsilon_{m+1}+\cdots+\epsilon_{m+n-q})\bigr)\Bigr)
\end{align*}
if $k<0$.
As a consequence,
\begin{align*}
&L^{\frak{u}(m,n)}\Bigl(\frac{1}{2}\bigl((\epsilon_1+\cdots+\epsilon_m)
 -(\epsilon_{m+1}+\cdots+\epsilon_{m+n})\bigr)\Bigr)
 \Bigr|_{\frak{u}(p,q)\oplus\frak{u}(m-p,n-q)}\\
&\simeq
\bigoplus_{k=0}^{\infty}
 \Bigg(L^{\frak{u}(p,q)}\Bigl(-k\epsilon_{m+n}+
 \frac{1}{2}\bigl((\epsilon_1+\cdots+\epsilon_p)
 -(\epsilon_{m+n-q+1}+\cdots+\epsilon_{m+n})\bigr)\Bigr)\\
&\qquad\boxtimes  L^{\frak{u}(m-p,n-q)}\Bigl(k\epsilon_{p+1}+
 \frac{1}{2}\bigl((\epsilon_{p+1}+\cdots+\epsilon_{m})
 -(\epsilon_{m+1}+\cdots+\epsilon_{m+n-q})\bigr)\Bigr)\Bigg)\\
&\quad\oplus\bigoplus_{k=1}^{\infty} 
\Bigg(L^{\frak{u}(p,q)}\Bigl(k\epsilon_1+
 \frac{1}{2}\bigl((\epsilon_1+\cdots+\epsilon_p)
 -(\epsilon_{m+n-q+1}+\cdots+\epsilon_{m+n})\bigr)\Bigr)\\
&\qquad\boxtimes  L^{\frak{u}(m-p,n-q)}\Bigl(-k\epsilon_{m+n-q}+
 \frac{1}{2}\bigl((\epsilon_{p+1}+\cdots+\epsilon_{m})
 -(\epsilon_{m+1}+\cdots+\epsilon_{m+n-q})\bigr)\Bigr)\Bigg).
\end{align*}

Suppose next that $p,q,m-p\geq 1$ and $n=q$.
Then a genuine representation ${\det}^a\boxtimes {\det}^b$ of
 $\widetilde{H_2}$ belongs to ${\cal R}(\widetilde{H_2},\omega)$
 if and only if $b-\frac{m-p}{2}\in \bb{Z}_{\geq 0}$.
Hence we obtain
\begin{align*}
&L^{\frak{u}(m,n)}\Bigl(\frac{1}{2}\bigl((\epsilon_1+\cdots+\epsilon_m)
 -(\epsilon_{m+1}+\cdots+\epsilon_{m+n})\bigr)\Bigr)
 \Bigr|_{\frak{u}(p,n)\oplus\frak{u}(m-p)}\\
&\simeq
\bigoplus_{k=0}^{\infty}
 L^{\frak{u}(p,n)}\Bigl(-k\epsilon_{m+n}+
 \frac{1}{2}\bigl((\epsilon_1+\cdots+\epsilon_p)
 -(\epsilon_{m+1}+\cdots+\epsilon_{m+n})\bigr)\Bigr)\\
&\qquad\boxtimes  F^{\frak{u}(m-p)}\Bigl(k\epsilon_{p+1}+
 \frac{1}{2}(\epsilon_{p+1}+\cdots+\epsilon_{m})\Bigr).
\end{align*}

These imply formulas \eqref{eq:umnpq} and \eqref{eq:umnpn}.

\subsubsection{}
Let $(\frak{g},\frak{g}^\sigma)=
 (\frak{so}^*(2n),\frak{u}(m,n-m))$.
Choose a standard basis $\epsilon_1,\dots,\epsilon_{n} \in \frak{t}_\bb{C}^*$
so that 
\begin{align*}
\Delta^+(\frak{k}_\bb{C},\frak{t}_\bb{C})
 &=\{\epsilon_i-\epsilon_j\}_{1\leq i < j  \leq n},\\
\Delta(\frak{p}_+,\frak{t}_\bb{C})
 &=\{\epsilon_i+\epsilon_{j}\}_{1\leq i< j \leq n}.
\end{align*}
We may assume that $\frak{t}\subset \frak{g}^\sigma$ and
\begin{align*}
\Delta^+(\frak{k}^\sigma_\bb{C},\frak{t}_\bb{C})
 &=\{\epsilon_i-\epsilon_j\}_{1\leq i < j  \leq m}
 \cup\{\epsilon_{m+i}-\epsilon_{m+j}\}_{1\leq i < j \leq n-m},\\
\Delta(\frak{p}^\sigma_+,\frak{t}_\bb{C})
 &=\{\epsilon_i+\epsilon_{m+j}\} _{1\leq i\leq m,\;1\leq j \leq n-m}.
\end{align*}
Put $G_1:=O^*(2n)$, $H_1:=Sp(1)$, $G_2:=U(m,n-m)$, and $H_2:=U(2)$
 such that $(G_1,H_1)$ and $(G_2,H_2)$ form seesaw dual pairs in $Sp(2n,\bb{R})$.
Let $\frak{c}$ be a Cartan subalgebra of $\frak{h}_2$
 and $\delta_1,\delta_2$ be the standard basis of $\frak{c}^*$
The genuine representations $\pi$ of $\widetilde{H_2}$ are determined by 
 the Lie algebra actions and given
 by $a\delta_1+b\delta_2$ such that $a-b\in \bb{Z}_{\geq 0}$ and $a\in \frac{n}{2}+\bb{Z}$.
Since $\widetilde{H_1}\simeq Sp(1)\times \bb{Z}/2\bb{Z}$,
 there exists a unique non-trivial character of $\widetilde{H_1}$,
 which we denote by $\chi$.
We have
\begin{align*}
\theta(\chi)\simeq
 L^{\frak{so}^*(2n)}(\epsilon_1+\cdots+\epsilon_n).
\end{align*}

Suppose first that $m,n-m\geq 2$.
Then any genuine representation $F^{\frak{u}(2)}(a\delta_1+b\delta_2)$ of
 $\widetilde{H_2}$ belongs to ${\cal R}(\widetilde{H_2},\omega)$.
We can see that any irreducible representation of $\widetilde{H_2}$
 remains irreducible when restricted to $\widetilde{H_1}$ and hence
\begin{align*}
\Hom_{\widetilde{H_1}}
 (\chi, F^{\frak{u}(2)}(a\delta_1+b\delta_2)|_{\widetilde{H_1}})
 \simeq \begin{cases} \bb{C} &
 \text{if $a=b$,} \\
 0 & \text{if $a>b$.} \end{cases}
\end{align*}
For $F^{\frak{u}(2)}((m-\frac{n}{2}+k)
 (\delta_1+\delta_2))\in{\cal R}(\widetilde{H_2},\omega)$, we have
\begin{multline*}
\theta\Bigl(F^{\frak{u}(2)}
 \Bigl(\Bigl(m-\frac{n}{2}+k\Bigr)(\delta_1+\delta_2)\Bigr)\Bigr)\\
\simeq 
 L^{\frak{u}(m,n-m)}(k(\epsilon_{n-1}+\epsilon_n)
 +(\epsilon_1+\cdots+\epsilon_m)-(\epsilon_{m+1}+\cdots+\epsilon_n))
\end{multline*}
if $k\leq 0$ and 
\begin{multline*}
\theta\Bigl(F^{\frak{u}(2)}
 \Bigl(\Bigl(m-\frac{n}{2}+k\Bigr)(\delta_1+\delta_2)\Bigr)\Bigr)\\
\simeq 
 L^{\frak{u}(m,n-m)}(k(\epsilon_1+\epsilon_2)
 +(\epsilon_1+\cdots+\epsilon_m)
 -(\epsilon_{m+1}+\cdots+\epsilon_n))
\end{multline*}
if $k>0$.
As a consequence,
\begin{align*}
& L^{\frak{so}^*(2n)}(\epsilon_1+\cdots+\epsilon_n)
 |_{\frak{u}(m,n-m)}\\
&\quad\simeq
\bigoplus_{k=0}^{\infty}
 L^{\frak{u}(m,n-m)}(-k(\epsilon_{n-1}+\epsilon_n)
 +(\epsilon_1+\cdots+\epsilon_m)
 -(\epsilon_{m+1}+\cdots+\epsilon_n))\\
&\qquad\oplus\bigoplus_{k=1}^{\infty} 
 L^{\frak{u}(m,n-m)}(k(\epsilon_1+\epsilon_2)
 +(\epsilon_1+\cdots+\epsilon_m)-(\epsilon_{m+1}+\cdots+\epsilon_n)).
\end{align*}

Suppose next that $m=1$ and $n-m\geq 2$.
Then a genuine representation $F^{\frak{u}(2)}(a\delta_1+b\delta_2)$ of
 $\widetilde{H_2}$ belongs to ${\cal R}(\widetilde{H_2},\omega)$
 if and only if $b\leq 1-\frac{n}{2}$.
Hence
\begin{multline*}
L^{\frak{so}^*(2n)}(\epsilon_1+\cdots+\epsilon_n)
 |_{\frak{u}(1,n-1)}\\
\simeq
\bigoplus_{k=0}^{\infty}
 L^{\frak{u}(1,n-1)}(-k(\epsilon_{n-1}+\epsilon_n)
 +\epsilon_1-(\epsilon_2+\cdots+\epsilon_n)).
\end{multline*}
Similarly for $n-m=1$.

These imply formulas \eqref{eq:sostaru}, \eqref{eq:sostaru1n-1}, and \eqref{eq:sostarun-11}.

\subsection{Zuckerman's derived functor modules $A_\frak{q}(\lambda)$}\label{subsec:aqlambda}
We will see that the restriction of the minimal holomorphic representation
 with respect to holomorphic symmetric pairs $(\frak{g},\frak{g}^\sigma)$
 can be written as a direct sum
 of $A_\frak{q}(\lambda)$ for a maximal parabolic subalgebra
 $\frak{q}\subset\frak{g}^\sigma_\bb{C}$
 unless $(\frak{g},\frak{g}^\sigma)
 =(\frak{sp}(n,\bb{R}),\frak{sp}(m,\bb{R})\oplus\frak{sp}(n-m,\bb{R}))$.
However, not in all cases we can tell from the general results Theorem~\ref{thm:aqfact} and Proposition~\ref{abelnil} whether the occurring modules $A_\frak{q}(\lambda)$ are irreducible, see Table~\ref{table:proof} for the list of cases where this works.
In fact, the occurring $A_\frak{q}(\lambda)$ is reducible in some cases, see Remark~\ref{rem:reducibleaq}.

The following formulas \eqref{eq:umnpqaq} -- \eqref{eq:e7sostaraq}
 correspond to \eqref{eq:umnpq} -- \eqref{eq:e7sostar}, respectively.
We follow the corresponding subsections in Section~\ref{sec:branchhol}
 for the notation $\omega_i$, $\mu_i$, $\nu_i$, and $\bb{C}_a$.
If a simple factor $\frak{g}'_\bb{C}$ of $\frak{g}^\sigma_\bb{C}$
 has simple roots $\beta_1,\beta_2,...$,
 we write $\frak{q}'(i)$ for the maximal parabolic subalgebra of
 $\frak{g}'_\bb{C}$ corresponding to $\beta_i$.
Similarly, if a factor $\frak{g}''_\bb{C}$ has simple roots $\gamma_1,\gamma_2,...$,
 write $\frak{q}''(i)$ for the maximal parabolic subalgebra
 corresponding to $\gamma_i$.

\subsubsection{$(\frak{g},\frak{g}^\sigma)
 =(\frak{su}(m,n),\frak{su}(p,q)\oplus\frak{su}(m-p,n-q)\oplus\frak{u}(1))$}
Assume $m\geq n$. Then we have
\begin{align}
\label{eq:umnpqaq}
&L^{\frak{g}}(\omega_m)|_{\frak{g}^\sigma}\\ \nonumber
&\simeq\bigoplus_{\substack{\frac{p-q}{2}\leq k\\ k\in\bb{Z}}}
A_{\frak{q}'(p+q-1)}((k-p)\mu_{p+q-1})\boxtimes
A_{\frak{q}''(1)}((k-(n-q))\nu_1)\boxtimes
\bb{C}_{-k+\frac{np-mq}{m+n}}
\\ \nonumber
&\quad\oplus\bigoplus_{\substack{\frac{(n-q)-(m-p)}{2}
 \leq k<\frac{p-q}{2}\\ k\in\bb{Z}}}
A_{\frak{q}'(1)}((-k-q)\mu_1)\boxtimes
A_{\frak{q}''(1)}((k-(n-q))\nu_1)\boxtimes
\bb{C}_{-k+\frac{np-mq}{m+n}}
\\ \nonumber
&\quad\oplus\!\!\!\!\!\!\!
\bigoplus_{\substack{k<\frac{(n-q)-(m-p)}{2}\\ k\in\bb{Z}}}
\!\!\!\!\!\!\!\!A_{\frak{q}'(1)}((-k-q)\mu_1)\boxtimes
A_{\frak{q}''(m+n-p-q-1)}((-k-(m-p))\nu_{m+n-p-q-1})
\boxtimes\bb{C}_{-k+\frac{np-mq}{m+n}}
\end{align}
if $p,q,m-p,n-q\geq 1$
and 
\begin{align}
\label{eq:umnpnaq}
&L^{\frak{g}}(\omega_m)|_{\frak{g}^\sigma}\\ \nonumber
\simeq
&\bigoplus_{\substack{0,\frac{p-q}{2}\leq k\\ k\in\bb{Z}}}
A_{\frak{q}'(p+q-1)}((k-p)\mu_{p+q-1})\boxtimes
F^{\frak{su}(m-p)}(k\nu_1)\boxtimes
\bb{C}_{-k+\frac{n(p-m)}{m+n}}\\ \nonumber
&\oplus\bigoplus_{\substack{0\leq k<\frac{p-q}{2}\\ k\in\bb{Z}}}
A_{\frak{q}'(1)}((-k-q)\mu_1)\boxtimes
F^{\frak{su}(m-p)}(k\nu_1)\boxtimes
\bb{C}_{-k+\frac{n(p-m)}{m+n}}
\end{align}
if $n=q$ and $p,q,m-p\geq 1$.

\subsubsection{$(\frak{g},\frak{g}^\sigma)
 =(\frak{su}(n,n),\frak{so}^*(2n))$}
We have
\begin{align}\label{eq:unnsostaraq}
L^{\frak{g}}(\omega_n)|_{\frak{g}^\sigma}
\simeq
\bigoplus_{k=0}^{\infty} A_{\frak{q}'(1)}((2k-(n-2))\mu_1).
\end{align}

\subsubsection{$(\frak{g},\frak{g}^\sigma)
 =(\frak{su}(n,n),\frak{sp}(n,\bb{R}))$}
We have
\begin{align}\label{eq:unnspnaq}
L^{\frak{g}}(\omega_n)|_{\frak{g}^\sigma}
\simeq A_{\frak{q}'(1)}(-n\mu_1).
\end{align}

\subsubsection{$(\frak{g},\frak{g}^\sigma)
 =(\frak{so}(2,2n),\frak{su}(1,n)\oplus\frak{u}(1))$}
We have
\begin{align}
\label{eq:souaq}
L^{\frak{g}}((n-1)\omega_1)|_{\frak{g}^\sigma}
\simeq
\bigoplus_{k=0}^{\infty} A_{\frak{q}'(2)}((k-1)\mu_2)
\boxtimes \bb{C}_{k+\frac{n-1}{2}}.
\end{align}

\subsubsection{$(\frak{g},\frak{g}^\sigma)
 =(\frak{so}(2,2n),\frak{so}(2,m)\oplus\frak{so}(2n-m))$}
For $m=2l$ even we have
\begin{align}
L^{\frak{g}}((n-1)\omega_1)|_{\frak{g}^\sigma}
\simeq
\bigoplus_{k=0}^{\infty} A_{\frak{q}'(1)}((n-2l+k-1)\mu_1)
\boxtimes F^{\frak{so}(2n-2l)}(k\nu_1).
\end{align}
if $n-l\geq 2$ and 
\begin{align}
L^{\frak{g}}((n-1)\omega_1)|_{\frak{g}^\sigma}
\simeq
\bigoplus_{k=-\infty}^{\infty} A_{\frak{q}'(1)}((-n+|k|+1)\mu_1)
\boxtimes \bb{C}_k
\end{align}
if $n-l=1$.

For $m=2l+1$ odd we have
\begin{align}
L^{\frak{g}}((n-1)\omega_1)|_{\frak{g}^\sigma}
\simeq
\bigoplus_{k=0}^{\infty} A_{\frak{q}'(1)}((n-2l+k-2)\mu_1)
\boxtimes F^{\frak{so}(2n-2l-1)}(k\nu_1)
\end{align}
if $n-l\geq 2$ and
\begin{align}\label{eq:so2eso2e-1aq}
L^{\frak{g}}((n-1)\omega_1)|_{\frak{g}^\sigma}
\simeq
A_{\frak{q}'(l+1)}(-2\mu_{l+1})
\end{align}
if $n-l=1$.

\subsubsection{$(\frak{g},\frak{g}^\sigma)
 =(\frak{so}(2,2n+1),\frak{so}(2,m)\oplus\frak{so}(2n-m+1))$}
 For $m=2l$ even we have
\begin{align}
L^{\frak{g}}\Bigl(\Bigl(n-\frac{1}{2}\Bigr)\omega_1\Bigr)
\Bigl|_{\frak{g}^\sigma}
\simeq
\bigoplus_{k=0}^{\infty} A_{\frak{q}'(1)}
 \Bigl(\Bigl(n-2l+k-\frac{1}{2}\Bigr)\mu_1\Bigr)
\boxtimes F^{\frak{so}(2n-2l+1)}(k\nu_1)
\end{align}
if $n>l$ and 
\begin{align}
L^{\frak{g}}\Bigl(\Bigl(n-\frac{1}{2}\Bigr)\omega_1\Bigr)
\Bigl|_{\frak{g}^\sigma}
\simeq A_{\frak{q}'(1)}\Bigl(\Bigl(-n-\frac{1}{2}\Bigr)\mu_1\Bigr)\oplus
 A_{\frak{q}'(1)}\Bigl(\Bigl(-n+\frac{1}{2}\Bigr)\mu_1\Bigr)
\end{align}
if $n=l$.

For $m=2l+1$ odd we have
\begin{align}
L^{\frak{g}}\Bigl(\Bigl(n-\frac{1}{2}\Bigr)\omega_1\Bigr)
\Bigl|_{\frak{g}^\sigma}
\simeq
\bigoplus_{k=0}^{\infty} A_{\frak{q}'(1)}
\Bigl(\Bigl(n-2l+k-\frac{3}{2}\Bigr)\mu_1\Bigr)
\boxtimes F^{\frak{so}(2n-2l)}(k\nu_1)
\end{align}
if $n-l\geq 2$ and 
\begin{align}
L^{\frak{g}}\Bigl(\Bigl(n-\frac{1}{2}\Bigr)\omega_1\Bigr)
\Bigl|_{\frak{g}^\sigma}
\simeq
\bigoplus_{k=-\infty}^{\infty} A_{\frak{q}'(1)}
\Bigl(\Bigl(-n+|k|+\frac{1}{2}\Bigr)\mu_1\Bigr)
\boxtimes \bb{C}_k
\end{align}
if $n-l=1$.

\subsubsection{$(\frak{g},\frak{g}^\sigma)
 =(\frak{so}^*(2n),\frak{su}(m,n-m)\oplus\frak{u}(1))$}
We have
\begin{align}\label{eq:sostaruaq}
L^{\frak{g}}(2\omega_n)|_{\frak{g}^\sigma}
\simeq{}& \bigoplus_{\substack{m-\frac{n}{2}\leq k\\ k\in \bb{Z}}}
A_{\frak{q}'(n-2)}((k-m)\mu_{n-2})\boxtimes \bb{C}_{-k-\frac{n}{2}+m}\\
 \nonumber
&\oplus\bigoplus_{\substack{k<m-\frac{n}{2}\\ k\in \bb{Z}}}
A_{\frak{q}'(2)}((-k-(n-m))\mu_2)\boxtimes \bb{C}_{-k-\frac{n}{2}+m}
\end{align}
if $2\leq m\leq n-2$, 
\begin{align}\label{eq:sostaru1n-1aq}
L^{\frak{g}}(2\omega_n)|_{\frak{g}^\sigma}
\simeq\bigoplus_{k=0}^{\infty}
A_{\frak{q}'(n-2)}((k-1)\mu_{n-2})\boxtimes \bb{C}_{-k-\frac{n}{2}+1}
\end{align}
if $m=1$, and
\begin{align}\label{eq:sostarun-11aq}
L^{\frak{g}}(2\omega_n)|_{\frak{g}^\sigma}
\simeq\bigoplus_{k=0}^{\infty}
A_{\frak{q}'(2)}((k-1)\mu_2)\boxtimes \bb{C}_{k+\frac{n}{2}-1}
\end{align}
if $m=n-1$.

\subsubsection{$(\frak{g},\frak{g}^\sigma)
 =(\frak{so}^*(2n),\frak{so}^*(2m)\oplus\frak{so}^*(2n-2m))$}
 We have
\begin{align}\label{eq:sostarsostaraq}
L^{\frak{g}}(2\omega_n)|_{\frak{g}^\sigma}
\simeq
\bigoplus_{k=0}^{\infty} A_{\frak{q}'(1)}((k-(m-2))\mu_1)
\boxtimes  A_{\frak{q}''(1)}((k-(n-m-2))\nu_1)
\end{align}
if $2\leq m\leq n-2$, 
\begin{align}\label{eq:sostarsostar1n-1aq}
L^{\frak{g}}(2\omega_n)|_{\frak{g}^\sigma}
\simeq
\bigoplus_{k=0}^{\infty} \bb{C}_{k+1}
\boxtimes  A_{\frak{q}''(1)}((k-(n-3))\nu_1)
\end{align}
if $m=1$, and
\begin{align}\label{eq:sostarsostarn-11aq}
L^{\frak{g}}(2\omega_n)|_{\frak{g}^\sigma}
\simeq
\bigoplus_{k=0}^{\infty} 
A_{\frak{q}'(1)}((k-(n-3))\mu_1) \boxtimes \bb{C}_{k+1}
\end{align}
if $n-m=1$.

\subsubsection{$(\frak{g},\frak{g}^\sigma)
 =(\frak{sp}(n,\bb{R}),\frak{su}(m,n-m)\oplus\frak{u}(1))$}
 We have
\begin{align}\label{eq:spuaq}
L^{\frak{g}}\Bigl(\frac{1}{2}\omega_n\Bigr)\Bigl|_{\frak{g}^\sigma}
\simeq{}&\bigoplus_{\substack{m-\frac{n}{2}\leq 2k\\ k\in \bb{Z}}}
A_{\frak{q}'(n-1)}((2k-m)\mu_{n-1})
 \boxtimes \bb{C}_{-k-\frac{n}{4}+\frac{m}{2}}\\ \nonumber
&\oplus\bigoplus_{\substack{2k<m-\frac{n}{2}\\ k\in \bb{Z}}}
A_{\frak{q}'(1)}((-2k-(n-m))\mu_1)
 \boxtimes \bb{C}_{-k-\frac{n}{4}+\frac{m}{2}}.
\end{align}

\subsubsection{$(\frak{g},\frak{g}^\sigma)=(\frak{sp}(n,\bb{R}),\frak{sp}(m,\bb{R})\oplus\frak{sp}(n-m,\bb{R}))$}

In this case it is not possible to write the restriction $L^{\frak{g}}(\frac{1}{2}\omega_n)|_{\frak{g}^\sigma}$ as a direct sum of Zuckerman's derived functor modules.
\setcounter{equation}{21}

\subsubsection{$(\frak{g},\frak{g}^\sigma)
 =(\frak{e}_{6(-14)},\frak{so}(2,8)\oplus\frak{so}(2))$}
 We have
\begin{align}
\label{eq:e6soaq}
L^{\frak{g}}(3\omega_6)|_{\frak{g}^\sigma}
&\simeq\bigoplus_{k=0}^{\infty}
A_{\frak{q}'(5)}((k-2)\mu_5)\boxtimes \bb{C}_{k+2}.
\end{align}

\subsubsection{$(\frak{g},\frak{g}^\sigma)
 =(\frak{e}_{6(-14)},\frak{su}(4,2)\oplus\frak{su}(2))$}
 We have
\begin{align}
L^{\frak{g}}(3\omega_6)|_{\frak{g}^\sigma}
&\simeq\bigoplus_{k=0}^{\infty}
A_{\frak{q}'(3)}((k-2)\mu_3)\boxtimes
F^{\frak{su}(2)}(k\nu_1).
\end{align}

\subsubsection{$(\frak{g},\frak{g}^\sigma)
 =(\frak{e}_{6(-14)},\frak{so}^*(10)\oplus\frak{so}(2))$}
 We have
\begin{align}
L^{\frak{g}}(3\omega_6)|_{\frak{g}^\sigma}
\simeq\bigoplus_{k=1}^{\infty}
\Bigl(A_{\frak{q}'(5)}((k-5)\mu_5)\boxtimes \bb{C}_{k-1}\Bigr)
\oplus\bigoplus_{k=0}^{\infty}
\Bigl(A_{\frak{q}'(4)}((k-3)\mu_4)\boxtimes \bb{C}_{-k-1}\Bigr).
\end{align}

\subsubsection{$(\frak{g},\frak{g}^\sigma)
 =(\frak{e}_{6(-14)},\frak{su}(5,1)\oplus\frak{sp}(1,\bb{R}))$}
 We have
\begin{align}
L^{\frak{g}}(3\omega_6)|_{\frak{g}^\sigma}
&\simeq\bigoplus_{k=0}^{\infty}
A_{\frak{q}'(3)}((k-1)\mu_3)\boxtimes A_{\frak{q}''(1)}((k+1)\nu_1).
\end{align}

\subsubsection{$(\frak{g},\frak{g}^\sigma)
 =(\frak{e}_{7(-25)},\frak{e}_{6(-14)}\oplus\frak{so}(2))$}
 We have
\begin{align}
L^{\frak{g}}(4\omega_7)|_{\frak{g}^\sigma}
\simeq\bigoplus_{k=2}^{\infty}
\Bigl(A_{\frak{q}'(6)}((k-8)\mu_6)\boxtimes \bb{C}_{k-2}\Bigr)
\oplus\bigoplus_{k=-1}^{\infty}
\Bigl(A_{\frak{q}'(1)}((k-4)\mu_1)\boxtimes \bb{C}_{-k-2}\Bigr).
\end{align}

\subsubsection{$(\frak{g},\frak{g}^\sigma)
 =(\frak{e}_{7(-25)},\frak{so}(2,10)\oplus\frak{sp}(1,\bb{R}))$}
 We have
\begin{align}
L^{\frak{g}}(4\omega_7)|_{\frak{g}^\sigma}
&\simeq\bigoplus_{k=0}^{\infty}
A_{\frak{q}'(5)}((k-2)\mu_5)\boxtimes
A_{\frak{q}''(1)}((k+2)\nu_1).
\end{align}

\subsubsection{$(\frak{g},\frak{g}^\sigma)
 =(\frak{e}_{7(-25)},\frak{su}(6,2))$}
 We have
\begin{align}
L^{\frak{g}}(4\omega_7)|_{\frak{g}^\sigma}
&\simeq\bigoplus_{k=0}^{\infty} A_{\frak{q}'(4)}((k-2)\mu_4).
\end{align}

\subsubsection{$(\frak{g},\frak{g}^\sigma)
 =(\frak{e}_{7(-25)},\frak{so}^*(12)\oplus\frak{su}(2))$}
 We have
\begin{align}
\label{eq:e7sostaraq}
L^{\frak{g}}(4\omega_7)|_{\frak{g}^\sigma}
&\simeq\bigoplus_{k=0}^{\infty}
A_{\frak{q}'(5)}((k-4)\mu_5)\boxtimes
F^{\frak{su}(2)}(k\nu_1).
\end{align}

\subsubsection{Proofs}

To prove \eqref{eq:umnpqaq} -- \eqref{eq:e7sostaraq} 
 we only need to check that both sides are isomorphic 
 as $\frak{k}^\sigma$-modules by \cite[Lemma 8.7]{Kob07}.
The left hand sides are decomposed into 
 irreducible $\frak{k}$-modules as in \eqref{eq:Ktype}
 and then decomposed into $\frak{k}^\sigma$-modules
 by using the branching laws from $\frak{k}$ to $\frak{k}^\sigma$.
For the right hand sides we use Theorem~\ref{Blattner} and
 Remark~\ref{rem:ktype}.

We illustrate computations in the case
 $(\frak{g},\frak{g}^\sigma)
 =(\frak{e}_{7(-25)},\frak{so}^*(12)\oplus\frak{su}(2))$.
Since the highest root for $\frak{g}$ is $\omega_1$, we have
\[L^{\frak{g}}(4\omega_7)|_\frak{k}\simeq
\bigoplus_{l=0}^{\infty}F^{\frak{k}}(l\omega_1+4\omega_7)\]
by \eqref{eq:Ktype}.
The branching law of $F^{\frak{k}}(l\omega_1+4\omega_7)|_{\frak{k}^\sigma}$
 is given by \cite{Nie} and we have
\[F^{\frak{k}}(l\omega_1+4\omega_7)|_{\frak{k}^\sigma}\simeq
 \bigoplus_{\substack{p+2q+r=l \\ p,q,r\in\bb{Z}_{\geq 0}}}
F^{\frak{k}^\sigma}(p\mu_2+q\mu_4+r\mu_5+4\mu_6+r\nu_1).\]
For the right hand side of \eqref{eq:e7sostaraq},
 we use Theorem~\ref{Blattner}.
Let $\frak{g}'=\frak{so}^*(12)$ and $\frak{g}'=\frak{k}'+\frak{p}'$
 the Cartan decomposition.
Write $\frak{q}'(5)=\frak{l}'_\bb{C}+\frak{u}'$ for Levi decomposition.
Then $\frak{l}'\simeq \frak{su}(5,1)\oplus\frak{u}(1)$, 
 $\frak{l}'\cap\frak{k}'\simeq \frak{su}(5)\oplus\frak{u}(1)^2$, and
 $\frak{u}'\cap\frak{p}'_\bb{C}
 \simeq F^{\frak{l}'\cap\frak{k}'}(\mu_2)$.
Hence $2\rho(\frak{u}'\cap\frak{p}'_\bb{C})=4\mu_5+4\mu_6$ and
\[S(\frak{u}'\cap\frak{p}'_\bb{C})
 \simeq \bigoplus_{p,q\in\bb{Z}_{\geq 0}}
 F^{\frak{l}'\cap\frak{k}'}(p\mu_2+q\mu_4)\]
as an $(\frak{l}'\cap\frak{k}')$-module. 
Therefore,
\begin{align*}
A_{\frak{q}'(5)}((k-4)\mu_5)|_{\frak{k}'}\simeq
\bigoplus_{p,q\in \bb{Z}_{\geq 0}}
F^{\frak{k}'}(p\mu_2+q\mu_4+k\mu_5+4\mu_6).
\end{align*}
As a result, both sides of \eqref{eq:e7sostaraq}
 are isomorphic to
\begin{align*}
\bigoplus_{p,q,r\in \bb{Z}_{\geq 0}}
F^{\frak{k}^\sigma}(p\mu_2+q\mu_4+r\mu_5+4\mu_6+r\nu_1),
\end{align*}
which proves \eqref{eq:e7sostaraq}.

For \eqref{eq:umnpqaq} -- \eqref{eq:unnsostaraq}, \eqref{eq:souaq}, 
\eqref{eq:sostaruaq} -- \eqref{eq:spuaq}, \eqref{eq:e6soaq} -- \eqref{eq:e7sostaraq},
 let $A_{\frak{q}'}(\lambda)$ be a derived functor module appearing
 on the right hand side.
Then we can see that $\lambda$ is weakly fair and
 the nilradical of $\frak{q}'$ is abelian.
Hence $A_{\frak{q}'}(\lambda)$ is an irreducible highest weight module
 by Proposition~\ref{abelnil}.
Using Theorem~\ref{Blattner} we can find a weight $\mu$
 such that $A_{\frak{q}'}(\lambda)\simeq L^{\frak{g}'}(\mu)$.
We can thus verify \eqref{eq:umnpq} -- \eqref{eq:unnsostar}, \eqref{eq:sou}, 
\eqref{eq:sostaru} -- \eqref{eq:spu}, \eqref{eq:e6so} -- \eqref{eq:e7sostar}.

\begin{rem}\label{rem:reducibleaq}
By comparing \eqref{eq:unnspn} and \eqref{eq:unnspnaq}, we see that for $\frak{g}=\frak{sp}(n,\bb{R})$, the module $A_{\frak{q}(1)}(-n\omega_1)$ is reducible. Similarly, from \eqref{eq:so2eso2e-1} and \eqref{eq:so2eso2e-1aq} we see that for $\frak{g}=\frak{so}(2,2n)$, the module $A_{\frak{q}(n+1)}(-2\omega_{n+1})$ is reducible.
\end{rem}

\subsection{The Fock model for $\frak{g}=\frak{so}(2,N)$}\label{subsec:Fock}

For $\frak{g}=\frak{so}(2,N)$ and $\frak{g}^\sigma=\frak{so}(2,M)\oplus\frak{so}(N-M)$ we obtain the branching law of $L^{\frak{g}}(c\zeta)|_{\frak{g}^\sigma}$ using an explicit realization of the minimal holomorphic representation, the Fock model. This model is constructed in \cite{KHMO12} for the minimal holomorphic representation and generalized in \cite{Moe13} to all scalar type unitary highest weight representations. In \cite[Theorem 7.2]{Moe13} the desired branching law is derived. We give a brief outline of the proof. For this we use the same notation as in Settings~\ref{so2e} and \ref{so2o}. Write $\omega_1$ for the corresponding fundamental weight of $\frak{so}(2,N)$ and $\mu_1$ for the one of $\frak{so}(2,M)$.

In \cite{Moe13} the $(\frak{g},K)$-modules $N^{\frak g}(x\omega_1)$ are realized on the space
\[\bb{C}[\frak{p}_-]\simeq\bb{C}[Z_1,\ldots,Z_N]\]
of regular functions on $\frak{p}_-\simeq\bb{C}^N$. The $\frak{g}$-action in this realization is given by regular differential operators up to order two. The crucial operators here are the second-order Bessel operators (see \cite[Sections 1.6, 2.4, 4.3]{Moe13}). The points of unitarity are given by $\{0,\frac{N-2}{2}\}\cup(\frac{N-2}{2},\infty)$.

For $x\in(\frac{N-2}{2},\infty)$ the unique irreducible quotient $L^{\frak g}(x\omega_1)$ is $N^{\frak g}(x\omega_1)$ itself, so
\[L^{\frak g}(x\omega_1) = \bb{C}[Z_1,\ldots,Z_N].\]
For $x=\frac{N-2}{2}$ the irreducible quotient is given by
\[L^{\frak g}\Bigl(\frac{N-2}{2}\omega_1\Bigr)=\bb{C}[Z_1,\ldots,Z_N]/\langle Z_1^2-Z_2^2-\cdots-Z_N^2\rangle=\bb{C}[\bb{X}],\]
the space of regular functions on the variety
\[\bb{X}=\{z\in\bb{C}^N:z_1^2=z_2^2+\cdots+z_N^2\}.\]

We first decompose $L^{\frak g}(\frac{N-2}{2}\omega_1)$ with respect to the action of $\frak{so}(N-M)$ on the last $N-M$ coordinates. Denote by $\cal{H}^k(\bb{C}^{N-M})$ the space of spherical harmonics of $N-M$ variables of degree $k$, viewed as holomorphic polynomials on $\bb{C}^{N-M}$. We have
\[\bb{C}[Z_1,\ldots,Z_N]=\bb{C}[Z_1,\ldots,Z_M]\otimes\bb{C}[Z_{M+1},\ldots,Z_N].\]
Further, every polynomial in $Z_{M+1},\ldots,Z_N$ is the sum of spherical harmonics multiplied with powers of $(Z_{M+1}^2+\cdots+Z_N^2)$. Since in $\bb{C}[\bb{X}]$ we have $Z_{M+1}^2+\cdots+Z_N^2=Z_1^2-Z_2^2-\cdots-Z_M^2$ we obtain
\[\bb{C}[\bb{X}]=\bigoplus_{k=0}^\infty\bb{C}[Z_1,\ldots,Z_M]\otimes\cal{H}^k(\bb{C}^{N-M}).\]
Carefully checking the $\frak{so}(2,M)$-action we find that as $\frak{g}^\sigma$-representations this gives
\[L^{\frak{so}(2,N)}\Bigl(\frac{N-2}{2}\omega_1\Bigr)=\bigoplus_{k=0}^\infty L^{\frak{so}(2,M)}\Bigl(\Bigl(\frac{N-2}{2}+k\Bigr)\mu_1\Bigr)\boxtimes\cal{H}^k(\bb{C}^{N-M}).\]
This proves the formulas \eqref{eq:so2eso2e} -- \eqref{eq:so2oso2o-2}.

\section{Branching laws for non-holomorphic symmetric pairs}\label{sec:nonhol}

By \cite[Theorem 5.2]{KO} the only symmetric pairs $(\frak{g},\frak{g}^\sigma)$ of non-holomorphic type such that $L^{\frak{g}}(c\zeta)|_{\frak{g}^\sigma}$ is discretely decomposable are given by
\begin{equation}
\begin{aligned}
 &(\frak{su}(2m,2n),\frak{sp}(m,n)), && (m,n\geq1), &&(\frak{so}(2,n),\frak{so}(1,n)), && (n\geq3),\\
 &(\frak{sp}(2n,\bb{R}),\frak{sp}(n,\bb{C})), && (n\geq1), &&(\frak{e}_{6(-14)},\frak{f}_{4(-20)}).
\end{aligned}\label{eq:NonHolomorphicPairs}
\end{equation}

\subsection{Irreducibility}\label{sec:nonholirred} We first prove that the restriction is irreducible:

\begin{thm}\label{thm:IrreducibilityForNonHolomorphicPairs}
If $(\frak{g},\frak{g}^\sigma)$ is one of the pairs in \eqref{eq:NonHolomorphicPairs} then the restriction $L^{\frak{g}}(c\zeta)|_{\frak{g}^\sigma}$ is irreducible.
\end{thm}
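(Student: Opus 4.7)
The plan is to handle the four non-holomorphic pairs via a common opening reduction, followed by case-specific tools. Write $V := L^{\frak g}(c\zeta)$. By unitarity of $V$ together with discrete decomposability, \cite[Lemma 1.3]{Kob98ii} gives a decomposition
\[ V|_{\frak g^\sigma} \simeq \bigoplus_{j} V_j \]
with each $V_j$ an irreducible $(\frak g^\sigma, K^\sigma)$-module. The lowest $K$-type $F^{\frak k}(c\zeta)$ is one-dimensional, because $\zeta$ annihilates $[\frak k,\frak k]\cap\frak t$ and $\frak z(\frak k)$ is one-dimensional in each case; so a spanning vector $v_0$ spans an irreducible $K^\sigma$-submodule and must lie in exactly one summand $V_{j_0}$. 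Proving the theorem is equivalent to $V_{j_0} = V$, i.e.\ that every $K^\sigma$-type of $V$ appears in $U(\frak g^\sigma)v_0$.

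The common second step is to compute the full $K^\sigma$-spectrum of $V$ explicitly: apply the classical branching rule $F^{\frak k}(c\zeta+k\beta)|_{\frak k^\sigma}$ --- available in closed form since $(\frak k, \frak k^\sigma)$ is a compact symmetric pair in each case --- to each summand in \eqref{eq:Ktype}, and match the result against the $K^\sigma$-spectrum of a candidate irreducible $(\frak g^\sigma, K^\sigma)$-module expected to be $V_{j_0}$. These candidates are supplied by Section~\ref{sec:idnonhol}: a cohomologically induced module $A_{\frak q'}(\lambda')$, a complementary series representation, or a small representation from \cite{HKM}, depending on the pair. Equality of $K^\sigma$-multiplicities on both sides, together with $v_0 \in V_{j_0}$, forces $V$ to coincide with this single irreducible module.

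For the pair-specific inputs I would use three different tools. For $(\frak{sp}(2n,\bb R), \frak{sp}(n,\bb C))$, realize $V$ as the even part of the metaplectic representation on $\bb{C}[z_1,\dots,z_{2n}]$; irreducibility of this space under the complexified action of $\frak{sp}(n,\bb C)$ is classical. For $(\frak{so}(2,n), \frak{so}(1,n))$, use the Fock model of Section~\ref{subsec:Fock} where $V \simeq \bb{C}[\bb{X}]$ for the null cone $\bb{X} \subset \bb{C}^n$; the induced action of $\frak{so}(1,n)$ on this polynomial ring is easily seen to be irreducible. For $(\frak{su}(2m,2n), \frak{sp}(m,n))$, employ the seesaw dual pair $(U(2m,2n), U(1)) \leftrightarrow (Sp(m,n), Sp(1))$ inside $Sp(4(m+n), \bb R)$, reducing the question to a calculation with genuine characters of the metaplectic cover and using the fact that $V$ is a theta lift of a one-dimensional character of $U(1)$.

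The main obstacle is the exceptional pair $(\frak{e}_{6(-14)}, \frak{f}_{4(-20)})$, where none of the above geometric or dual-pair tools are available. For this case I would fall back on purely representation-theoretic bookkeeping: compute the $K^\sigma = Spin(9)$-spectrum of $V$ from \eqref{eq:Ktype} using the branching $Spin(10) \downarrow Spin(9)$, and compare with the $Spin(9)$-structure of the expected target --- a specific small unitary $\frak{f}_{4(-20)}$-module from \cite{HKM}. Uniqueness of small irreducible unitary $\frak{f}_{4(-20)}$-modules with the given lowest $K^\sigma$-type and infinitesimal character then yields $V = V_{j_0}$, completing the proof.
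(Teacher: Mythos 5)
Your overall strategy --- decompose $V=L^{\frak{g}}(c\zeta)$ discretely, locate the (one-dimensional) lowest $K$-type in a single summand $V_{j_0}$, and then argue that $V_{j_0}$ exhausts the $K^\sigma$-spectrum --- misses the one idea that makes the paper's proof short and uniform, and the step you substitute for it has a genuine logical gap. Matching the $K^\sigma$-multiplicities of $V$ against those of an \emph{external} candidate irreducible module $W$ does not force $V=V_{j_0}$: you must first prove $V_{j_0}\simeq W$, which requires computing the infinitesimal character of $V_{j_0}$ and invoking a uniqueness statement for spherical irreducible modules (Helgason, as in Section~\ref{sec:idnonhol}); as written, ``equality of $K^\sigma$-multiplicities \dots forces $V$ to coincide with this single irreducible module'' is a non sequitur, since a priori $V$ could split with the candidate's $K^\sigma$-types distributed over several summands. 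There is also a circularity: the candidates you propose to compare against are exactly the identifications of Section~\ref{sec:idnonhol}, whose proofs use the irreducibility theorem as input. Finally, your dual pair for $(\frak{su}(2m,2n),\frak{sp}(m,n))$ is wrong: the commutant of $Sp(m,n)$ in the relevant metaplectic group is $O^*(2)\simeq U(1)$, not $Sp(1)$ (the compact group $Sp(1)$ pairs with $O^*(2k)$, not with $Sp(m,n)$); taken literally, a seesaw with $H_2=Sp(1)$ would make the given character of $\widetilde{U(1)}$ occur in infinitely many irreducibles of $\widetilde{H_2}$ and would wrongly suggest an infinite decomposition. (With the correct partner $O^*(2)=U(1)$ the seesaw does give irreducibility at once, so this part of your idea is salvageable.)

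The paper's actual argument treats all four pairs uniformly and needs none of the case-specific machinery you invoke. Proposition~\ref{prop:irred} shows that if each $K$-type $F^{\frak{k}}(c\zeta+k\beta)$ in \eqref{eq:Ktype} remains irreducible under $\frak{k}^\sigma$ and distinct $k$ give non-isomorphic $\frak{k}^\sigma$-modules, then any $\frak{g}^\sigma$-invariant subspace is a sum of full $K$-types, hence $\frak{k}$-invariant, hence $\frak{g}$-invariant because the $\frak{k}$-module generated by $\frak{p}^\sigma\neq0$ is all of $\frak{p}$. The hypothesis is then checked by Borel--Weil together with the transitivity of $K^\sigma$ on the relevant compact flag variety: $Sp(m)\times Sp(n)$ on $\bb{P}_\bb{C}^{2m-1}\times\bb{P}_\bb{C}^{2n-1}$, $Sp(n)$ on $\bb{P}_\bb{C}^{2n-1}$, $SO(9)$ on $SO(10)/U(5)$, and spherical harmonics for $\frak{so}(N)$. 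In particular the exceptional pair $(\frak{e}_{6(-14)},\frak{f}_{4(-20)})$, which you single out as the main obstacle, is no harder than the others. Without some version of this $K$-type lemma, each of your case-by-case arguments either rests on an unproved ``easily seen'' irreducibility claim (e.g.\ the Bessel-operator action of $\frak{so}(1,n)$ on $\bb{C}[\bb{X}]$) or on results whose proofs already presuppose the theorem.
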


For the pair $(\frak{so}(2,n),\frak{so}(1,n))$ this was already proved by Sepp\"{a}nen~\cite[Theorem 19]{Sep07i}. He identified the restriction with a complementary series representation of $\frak{so}(1,n)$. For $(\frak{e}_{6(-14)},\frak{f}_{4(-20)})$ the irreducibility was shown by Binegar--Zierau~\cite[Theorem 3.3]{BZ94}. They identify the restriction with a certain Zuckerman's derived functor module. In Section~\ref{sec:idnonhol} we will also identify the two remaining cases with known representations.

The general result in Theorem~\ref{thm:IrreducibilityForNonHolomorphicPairs} will follow from the following statement which was basically used in \cite{BZ94}:

\begin{prop}\label{prop:irred}
Suppose $L^{\frak{g}}(c\zeta)$ is discretely decomposable when restricted to the non-compact symmetric subalgebra $\frak{g}^\sigma$. If each $K$-type $F^{\frak{k}}(c\zeta+k\beta)$ is irreducible when restricted to $\frak{k}^\sigma$ and if they are pairwise non-isomorphic as $\frak{k}^\sigma$-modules, then $L^{\frak{g}}(c\zeta)$ is irreducible when restricted to $\frak{g}^\sigma$.
\end{prop}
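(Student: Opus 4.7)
The plan is to combine the unitarity/discrete-decomposability principle with the grading of $L^{\frak g}(c\zeta)$ by the $K$-types $F^{\frak k}(c\zeta+k\beta)$ and the observation that, for a non-holomorphic symmetric pair, $\sigma$ swaps $\frak p_+$ and $\frak p_-$, so that $(\frak g^\sigma)_\bb{C}\cap\frak p_\bb{C}$ couples neighbouring $K$-types. First, by unitarity and \cite[Lemma 1.3]{Kob98ii} cited in the introduction, discrete decomposability yields an orthogonal decomposition
\begin{align*}
L^{\frak g}(c\zeta)\bigl|_{\frak g^\sigma}\simeq\bigoplus_{i\in I}V_i
\end{align*}
into irreducible $(\frak g^\sigma,K^\sigma)$-modules. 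The $K$-type formula \eqref{eq:Ktype} together with the two hypotheses gives
\begin{align*}
L^{\frak g}(c\zeta)\bigl|_{\frak k^\sigma}\simeq\bigoplus_{k=0}^\infty W_k, \qquad W_k:=F^{\frak k}(c\zeta+k\beta)\bigl|_{\frak k^\sigma},
\end{align*}
a decomposition into pairwise non-isomorphic irreducible $\frak k^\sigma$-modules. Since each $V_i$ is $\frak k^\sigma$-stable, it equals $\bigoplus_{k\in J_i}W_k$ for some $J_i\subset\bb{Z}_{\geq 0}$, and proving irreducibility reduces to showing that a single $J_i$ exhausts $\bb{Z}_{\geq 0}$.

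The key structural input is that, since $\frak g$ is simple of Hermitian type, the centre $Z(\frak k)=\bb{C}z$ is one-dimensional, so $\sigma z=\pm z$, and in the non-holomorphic situation $\sigma z=-z$. Equivalently $\sigma z'=-z'$, so $\sigma$ interchanges the $\ad(z')$-eigenspaces $\frak p_+$ and $\frak p_-$. Every element of $(\frak g^\sigma)_\bb{C}\cap\frak p_\bb{C}$ is therefore of the form $X+\sigma(X)$ with $X\in\frak p_+$ and $\sigma(X)\in\frak p_-$. Since $\frak p_\pm$ shifts the $K$-type grading by $\pm 1$, I then have
\begin{align*}
(X+\sigma(X))\cdot W_k\subset W_{k-1}\oplus W_{k+1}\qquad (W_{-1}:=0),
\end{align*}
with the $W_{k+1}$-component equal to $X\cdot w$ and the $W_{k-1}$-component equal to $\sigma(X)\cdot w$ for $w\in W_k$.

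To propagate membership in $V_i$ through adjacent $W_k$'s, I would note that since $L^{\frak g}(c\zeta)$ is an irreducible lowest weight module, $\frak p_+\cdot F^{\frak k}(c\zeta+k\beta)\ne 0$ for all $k\geq 0$ (it generates the module from the lowest $K$-type) and $\frak p_-\cdot F^{\frak k}(c\zeta+k\beta)\ne 0$ for all $k\geq 1$ (otherwise $\bigoplus_{j\geq k}F^{\frak k}(c\zeta+j\beta)$ would be a proper $(\frak g,K)$-submodule). Hence, for any $k\geq 0$, I can pick $X\in\frak p_+$ and $w\in W_k$ with $Xw\ne 0$; then $(X+\sigma(X))w\in V_i$ has non-zero component in $W_{k+1}$, and the $\frak k^\sigma$-irreducibility of $W_{k+1}$ together with $\frak k^\sigma$-stability of $V_i$ forces $W_{k+1}\subset V_i$. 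The downward step for $k\geq 1$ is analogous, using $\sigma(X)w\ne 0$. Induction gives $J_i=\bb{Z}_{\geq 0}$, so $I$ is a singleton and the restriction is irreducible. The only delicate input is the identification $\sigma z=-z$, which rests on the one-dimensionality of $Z(\frak k)$ in the simple Hermitian setting; everything else is straightforward bookkeeping with the $K$-type grading.
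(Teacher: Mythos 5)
Your argument is correct, but it takes a genuinely different route from the paper's. The paper works with an arbitrary $\frak{g}^\sigma$-stable subspace $X$ of $L^{\frak{g}}(c\zeta)$: by the two hypotheses on the $K$-types, $X$ is forced to be a direct sum of \emph{full} $K$-types $F^{\frak{k}}(c\zeta+k\beta)$ and is therefore $\frak{k}$-stable; since $\frak{p}$ is an irreducible $\frak{k}$-module and $\frak{p}^\sigma\neq0$, the $\frak{k}$-module generated by $\frak{p}^\sigma$ is all of $\frak{p}$, so $X$ is $\frak{p}$-stable, hence $\frak{g}$-stable, hence $0$ or everything. That argument is shorter, never invokes discrete decomposability, and does not need the identity $\sigma z=-z$ --- only $\frak{p}^\sigma\neq0$. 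Your proof instead exploits the non-holomorphy explicitly: $\sigma$ interchanges $\frak{p}_\pm$, every element of $(\frak{g}^\sigma)_\bb{C}\cap\frak{p}_\bb{C}$ has the form $X+\sigma(X)$ with $X\in\frak{p}_+$, and these operators ladder between adjacent $z'$-eigenspaces; together with $\frak{p}_+\cdot F^{\frak{k}}(c\zeta+k\beta)\neq0$ for $k\geq0$ and $\frak{p}_-\cdot F^{\frak{k}}(c\zeta+k\beta)\neq0$ for $k\geq1$ (both of which you justify correctly) this propagates membership in a fixed irreducible summand through all $k$. All the individual steps check out, including the reduction $\sigma z=\pm z$ via the one-dimensionality of the centre of $\frak{k}$ and the multiplicity-freeness argument identifying each $V_i$ with $\bigoplus_{k\in J_i}W_k$. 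What your version buys is the finer inductive statement $U_n(\frak{g}^\sigma_\bb{C})X_0=\bigoplus_{k=0}^nF^{\frak{k}}(c\zeta+k\beta)$ for $X_0=X^{\frak{p}_-}$, which is essentially the refinement the paper later extracts \lq\lq{}from the proof\rq\rq{} of this proposition in Section~\ref{sec:AssVar} when verifying the hypothesis of Lemma~\ref{lem:compatifilt}; what it costs is the extra hypotheses (discrete decomposability and $\sigma z=-z$) that the paper's softer argument does not actually use.
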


\begin{proof}
Assume $X$ is a $\frak{g}^\sigma$-stable subspace of $L^{\frak{g}}(c\zeta)$.  Then by our assumption $X$ is a direct sum of $K$-types of the form $F^{\frak{k}}(c\zeta+k\beta)$ and in particular $X$ is $\frak{k}$-stable.  Since $\frak{p}^\sigma\neq0$, the $\frak{k}$-submodule of $\frak{p}$ generated by $\frak{p}^\sigma$ has to be $\frak{p}$ itself.  Therefore $X$ is also stable under $\frak{p}$ and thus under $\frak{g}$ which implies that $X=0$ or $L^{\frak{g}}(c\zeta)$ by the irreducibility of $L^{\frak{g}}(c\zeta)$ as a $\frak{g}$-representation.
\end{proof}

It remains to show that the $K$-types $F^{\frak{k}}(c\zeta+k\beta)$ are irreducible and pairwise non-isomorphic when restricted to $\frak{k}^\sigma$ in the four cases considered above. 

\subsubsection{$(\frak{g},\frak{g}^\sigma)=(\frak{su}(2m,2n),\frak{sp}(m,n))$}\label{sec:irredsusp}

We have $c\zeta=\omega_{2m}$ by Setting~\ref{sumn} and the highest root is $\beta=\omega_1+\omega_{2m+2n-1}$ so $c\zeta+k\beta=k\omega_1+\omega_{2m}+k\omega_{2m+2n-1}$.
Then by the Borel--Weil Theorem $F^{\frak{k}}(c\zeta+k\beta)$ is realized as holomorphic sections of a line bundle on the partial flag variety $SU(2m)/ U(2m-1)\times SU(2n)/U(2n-1) \simeq \bb{P}_\bb{C}^{2m-1}\times \bb{P}_\bb{C}^{2n-1}$.  Since $Sp(m)\times Sp(n)$ acts transitively on this variety, we have $SU(2m)/U(2m-1) \simeq Sp(m)/(U(1)\times Sp(m-1))$ and $SU(2n)/U(2n-1) \simeq Sp(n)/(U(1)\times Sp(n-1))$.  Moreover, if two characters of $U(2n-1)$ are non-isomorphic with each other, they are still so when restricted to $U(1)\times Sp(n-1)$.  Therefore the Borel--Weil Theorem again implies that $F^{\frak{k}}(c\zeta+k\beta)$ are irreducible and pairwise non-isomorphic as $(\frak{sp}(m)\oplus\frak{sp}(n))$-module.

\subsubsection{$(\frak{g},\frak{g}^\sigma)=(\frak{so}(2,N),\frak{so}(1,N))$}

We have $\frak{k}=\frak{so}(2)\oplus \frak{so}(N)$ and $\frak{k}^\sigma=\frak{so}(N)$. Hence any irreducible $\frak{k}$-module is written as the outer tensor product $V=V_1\boxtimes V_2$, where $V_1$ is an irreducible $\frak{so}(2)$-module and $V_2$ is an irreducible $\frak{so}(N)$-module. Then $V_1$ is one-dimensional and the restriction $V|_{\frak{so}(N)}\simeq V_2$ is irreducible. Further, for $V=F^{\frak{k}}(c\zeta+k\beta)$ we have $V_2\cong\mathcal{H}^k(\bb{R}^N)$, the space of spherical harmonics of degree $k$ on $\bb{R}^N$ and hence different parameters $k$ give non-isomorphic $\frak{so}(N)$-modules $V_2$ if $N\geq 3$.

\subsubsection{$(\frak{g},\frak{g}^\sigma)=(\frak{sp}(2n,\bb{R}),\frak{sp}(n,\bb{C}))$}

We have $c\zeta=\frac{1}{2}\omega_{2n}$ by Setting~\ref{spr} and the highest root is $\beta=2\omega_1$ so $c\zeta+k\beta=2k\omega_1+\frac{1}{2}\omega_{2n}$.
Then by the Borel--Weil Theorem $F^{\frak{k}}(c\zeta+k\beta)$ is realized as holomorphic sections of a line bundle on the partial flag variety $SU(2n)/ U(2n-1)\simeq\bb{P}_\bb{C}^{2n-1}$.  
As in \ref{sec:irredsusp}, we see that $F^{\frak{k}}(c\zeta+k\beta)$ are irreducible and pairwise non-isomorphic as $\frak{sp}(n)$-module.

\subsubsection{$(\frak{g},\frak{g}^\sigma)=(\frak{e}_{6(-14)},\frak{f}_{4(-20)})$}

We note that $\frak{k}=\frak{so}(10)$ and $\frak{k}^\sigma=\frak{so}(9)$.
We have $c\zeta=3\omega_6$ by Setting~\ref{e6(-14)} and the highest root is $\beta=\omega_2$ so $c\zeta+k\beta=3\omega_6+k\omega_2$.
Then by the Borel--Weil Theorem $F^{\frak{k}}(c\zeta+k\beta)$ is realized as holomorphic sections of a line bundle on the partial flag variety $SO(10)/U(5)$.  Since $SO(9)$ acts transitively on this variety, we have $SO(10)/U(5)\simeq SO(9)/U(4)$.  Moreover, if two characters of $U(5)$ are non-isomorphic with each other, they are still so when restricted to $U(4)$.  Therefore we conclude that $F^{\frak{k}}(c\zeta+k\beta)$ are irreducible and pairwise non-isomorphic as $\frak{so}(9)$-module.

\subsection{Identification}\label{sec:idnonhol}

We identify the irreducible restrictions $L^\frak{g}(c\zeta)|_{\frak{g}^\sigma}$ with known representations. For $(\frak{g},\frak{g}^\sigma)=(\frak{sp}(2n,\bb{R}),\frak{sp}(n,\bb{C}))$ the restriction is the even part of the (complex) metaplectic representation of $\frak{sp}(n,\bb{C})$. In the other three cases the restriction is isomorphic to Zuckerman's module. Further, for $(\frak{so}(2,N),\frak{so}(1,N))$ the restriction is a spherical complementary series representation. Moreover, the restrictions for $(\frak{sp}(2n,\bb{R}),\frak{sp}(n,\bb{C}))$ and $(\frak{su}(2n,2n),\frak{sp}(n,n))$ appear in \cite{HKM,Sah95} as representations with minimal Gelfand--Kirillov dimension.

\subsubsection{$\frak{su}(2m,2n)\downarrow\frak{sp}(m,n)$}

Let $(\frak{g},\frak{g}^\sigma)=(\frak{su}(2m,2n),\frak{sp}(m,n))$ with $m\geq n$. We take $\alpha_i$ as in Setting~\ref{sumn}.
We may assume $\sigma\frak{t}=\frak{t}$ and
\begin{align*}
&\sigma\alpha_{i}=\alpha_{2m-i}\ (1\leq i\leq 2m-1), &&
\sigma\alpha_{2m}=-\beta,\\
&\sigma\alpha_{2m+i}=\alpha_{2m+2n-i}\ (1\leq i\leq 2n-1), &&
\sigma=1\text{ on }\frak{g}_{\alpha_m}\text{ and }\frak{g}_{\alpha_{2m+n}}.
\end{align*}
Then $\frak{t}^\sigma$ is a Cartan subalgebra of $\frak{k}^\sigma$ and of $\frak{g}^\sigma$.
We put
\begin{align*}
&\beta_i:=\alpha_i|_{\frak{t}^\sigma}\ (1\leq i\leq m-1),  &&
\beta_m:=(\alpha_m+\alpha_{m+1}+\cdots+\alpha_{2m})|_{\frak{t}^\sigma},\\
&\beta_{m+i}:=\alpha_{2m+i}|_{\frak{t}^\sigma}\ (1\leq i\leq n-1), &&
\beta_{m+n}:=\alpha_{2m+n}|_{\frak{t}^\sigma}.
\end{align*}
Then $\beta_i$ form a set of simple roots for $\frak{g}^\sigma_\bb{C}$
 and the corresponding Dynkin diagram is:
\[\begin{xy}
\ar@{-} (0,0) *+!D{\beta_1} *{\circ}="A";  (10,0)
\ar@{.} (10,0); (20,0) 
\ar@{-} (20,0); (30,0) *+!D{\beta_{m+n-1}} *{\circ}="F"
\ar@{<=} "F"; (43,0) *+!D{\beta_{m+n}} *{\circ}
\end{xy}\]
We have $\theta=1$ on $\frak{g}^\sigma_{\beta_i}$ for $i\neq m$ 
 and $\theta=-1$ on $\frak{g}^\sigma_{\beta_m}$.
Write $\mu_i$ for the corresponding fundamental weights.

\begin{thm}\label{thm:su(2m,2n)TOsp(m,n)}
For the symmetric pair $(\frak{g},\frak{g}^\sigma)
 =(\frak{su}(2m,2n),\frak{sp}(m,n))$,
\begin{align}\label{eq:susp}
L^{\frak{g}}(\omega_{2m})|_{\frak{g}^\sigma}
\simeq
A_{\frak{q}'(1)}(-2n\mu_1),
\end{align}
where $\frak{q}'(1)$ is the maximal parabolic subalgebra of $\frak{g}^\sigma_\bb{C}$ corresponding to $\beta_1$. Further, for $m=n$ the restriction $L^{\frak{g}}(\omega_{2m})|_{\frak{g}^\sigma}$ is isomorphic to the small representation of $\frak{sp}(n,n)$ constructed in \cite{HKM}.
\end{thm}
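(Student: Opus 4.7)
Irreducibility of $L^{\frak g}(\omega_{2m})|_{\frak{g}^\sigma}$ is already settled by Theorem~\ref{thm:IrreducibilityForNonHolomorphicPairs}, so only the identification with $A_{\frak{q}'(1)}(-2n\mu_1)$ needs proof. Following the approach of Section~\ref{subsec:aqlambda}, and using \cite[Lemma 8.7]{Kob07}, it suffices to show that the two sides of \eqref{eq:susp} are isomorphic as $(\frak{k}^\sigma, K^\sigma)$-modules. The plan is therefore to compute the $K^\sigma$-type decomposition of each side explicitly and match them; this will in particular force $A_{\frak{q}'(1)}(-2n\mu_1)$ to be irreducible and non-zero.

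\textbf{Left hand side.} From \eqref{eq:Ktype} we have
\[
L^{\frak g}(\omega_{2m})|_{\frak k}\;\simeq\;\bigoplus_{k=0}^\infty F^{\frak k}(\omega_{2m}+k\beta).
\]
The Borel--Weil argument already carried out in Section~\ref{sec:irredsusp} shows that each summand restricts to an irreducible and pairwise non-isomorphic $(\frak{sp}(m)\oplus\frak{sp}(n))$-module of sections of a line bundle over $Sp(m)/(U(1)\times Sp(m-1))\times Sp(n)/(U(1)\times Sp(n-1))$. This produces an explicit multiplicity-free family of $\frak{k}^\sigma$-highest weights indexed by $k\geq 0$, which I would record in the $C_{m+n}$-basis $\mu_1,\dots,\mu_{m+n}$ from the statement.

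\textbf{Right hand side.} In the $C_{m+n}$ root system coming from $\frak{g}^\sigma_\bb C \simeq \frak{sp}(m+n,\bb C)$, the Levi of $\frak{q}'(1)$ satisfies $[\frak{l}',\frak{l}']_\bb C\simeq \frak{sp}(m+n-1,\bb C)$, and $\beta_m$ is still the unique non-compact simple root inside it. I would identify $\frak{l}'\cap \frak{k}^\sigma$ and describe $\frak{u}'\cap\frak{p}^\sigma_\bb{C}$ as an explicit $(\frak{l}'\cap\frak{k}^\sigma)$-module, and then use Theorem~\ref{Blattner} (or more conveniently Remark~\ref{rem:ktype}) to decompose
\[
S(\frak{u}'\cap\frak{p}^\sigma_\bb C)\otimes\bb C_{-2n\mu_1+2\rho(\frak{u}'\cap\frak{p}^\sigma_\bb C)}
\]
into multiplicity-free $(\frak{l}'\cap\frak{k}^\sigma)$-summands whose highest weights turn out to be $\frak{k}^\sigma$-dominant. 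A short computation in the $\epsilon_i$-basis gives $\rho(\frak{u}')=(m+n)\epsilon_1$ and $\mu_1=\epsilon_1$, so $-2n\mu_1+\rho(\frak{u}')=(m-n)\epsilon_1$ is dominant on $\Delta(\frak{u}',\frak{h}_\bb C)$; Theorem~\ref{thm:aqfact} then certifies that $-2n\mu_1$ is in the weakly fair range under the standing assumption $m\geq n$, so $A_{\frak{q}'(1)}(-2n\mu_1)$ is unitarizable or zero. Matching the resulting $K^\sigma$-type list against the one coming from the left hand side proves \eqref{eq:susp}.

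\textbf{Main obstacle and the case $m=n$.} The most delicate step is the combinatorial matching of the two $K^\sigma$-type families, which requires carefully translating between the $A_{2m+2n-1}$-weight basis of Setting~\ref{sumn} used for $\frak k$ and the $C_{m+n}$-basis used for $\frak{g}^\sigma_\bb C$; when $m=n$ we are only at the boundary of the weakly fair range, so I must rely on Remark~\ref{rem:ktype} rather than the good-range irreducibility of Theorem~\ref{thm:aqfact}. For the final assertion, note that $L^{\frak g}(\omega_{2n})$ attains the smallest Gelfand--Kirillov dimension of $\frak g$, and by discrete decomposability the restriction attains the minimal Gelfand--Kirillov dimension for $\frak{sp}(n,n)$; combining this with Kobayashi's associated-variety conjecture (verified for this setting in Section~\ref{sec:AssVar}) pins down the associated variety, and a comparison of minimal $K^\sigma$-types with the explicit construction of \cite{HKM} identifies the restriction with their small representation of $\frak{sp}(n,n)$.
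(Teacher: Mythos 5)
Your reduction of the theorem to a comparison of $K^\sigma$-type decompositions via \cite[Lemma 8.7]{Kob07} does not work here, and this is the essential gap. That lemma concerns modules decomposing into unitarizable \emph{highest weight} modules of the subalgebra, where the $K$-structure does determine the decomposition; it is precisely the tool used in Section~\ref{subsec:aqlambda} for the \emph{holomorphic} pairs. But $\frak{g}^\sigma=\frak{sp}(m,n)$ is not of Hermitian type, the restriction $L^{\frak g}(\omega_{2m})|_{\frak{g}^\sigma}$ is not a highest weight module of $\frak{g}^\sigma$, and two irreducible $(\frak{g}^\sigma,K^\sigma)$-modules with identical $K^\sigma$-types need not be isomorphic in general (spherical principal series at different parameters already show this). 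So even after you match the two multiplicity-free $K^\sigma$-type ladders $\bigoplus_{k}F^{\frak{k}^\sigma}(k\mu_1+k\mu_{m+1})$, an additional invariant is required to conclude \eqref{eq:susp}. The paper supplies it as follows: using the argument of Binegar--Zierau \cite{BZ94} one computes that $L^{\frak g}(\omega_{2m})|_{\frak{g}^\sigma}$ has infinitesimal character $\mu_{2n}-\rho_{\frak{g}^\sigma}$, which lies in the same Weyl group orbit as $-2n\mu_1+\rho_{\frak{g}^\sigma}$, the infinitesimal character of $A_{\frak{q}'(1)}(-2n\mu_1)$ by Theorem~\ref{thm:infchar}; both modules are spherical (the $k=0$ summand is the trivial $K^\sigma$-type, since $F^{\frak{k}}(\omega_{2m})$ is a character and $\frak{k}^\sigma$ is semisimple) and irreducible (the left side by Theorem~\ref{thm:IrreducibilityForNonHolomorphicPairs}, the right side from the Blattner computation combined with the argument of Proposition~\ref{prop:irred}); and an irreducible spherical representation is determined by its infinitesimal character by \cite[\S 7]{Hel64}. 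Your $K^\sigma$-type computations are still needed for the sphericity and irreducibility of $A_{\frak{q}'(1)}(-2n\mu_1)$, but they cannot by themselves force the isomorphism, nor do they ``force'' irreducibility of the $A_{\frak q}(\lambda)$ in the way you suggest.

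The same issue affects your treatment of the final assertion for $m=n$: agreement of associated varieties (or Gelfand--Kirillov dimensions) together with minimal $K^\sigma$-types does not characterize an irreducible unitary representation, and invoking the associated-variety conjecture of Section~\ref{sec:AssVar} is circular in spirit and in any case insufficient. The paper again argues via sphericity plus infinitesimal characters, using \cite[Theorem 3.7]{HKM} to compute the infinitesimal character of the small representation and then applying \cite[\S 7]{Hel64} once more. To repair your proof, keep the $K^\sigma$-type matching but add the infinitesimal character computation for the restriction and the appeal to Helgason's theorem in both steps.
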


\subsubsection{$\frak{so}(2,N)\downarrow\frak{so}(1,N)$}
Let $(\frak{g},\frak{g}^\sigma)=(\frak{so}(2,N),\frak{so}(1,N))$.
First assume that $N=2n$ is even. We take $\alpha_i$ as in Setting~\ref{so2e}.
We may assume $\sigma\frak{t}=\frak{t}$ and
\begin{align*}
\sigma\alpha_{1}=-\beta,\quad
\sigma\alpha_{i}=\alpha_{i}\ (2\leq i\leq n+1),\quad
\sigma=1\text{ on }\frak{g}_{\alpha_i}\ (2\leq i\leq n+1).
\end{align*}
Then $\frak{t}^\sigma$ is a Cartan subalgebra of $\frak{k}^\sigma$
 and of $\frak{g}^\sigma$.
We put
\begin{align*}
\beta_i:=\alpha_{i+1}|_{\frak{t}^\sigma}\ (1\leq i\leq n-1),\quad
\beta_n:=(\alpha_{1}+\alpha_2+\cdots+\alpha_{n-1}+\alpha_{n+1})|_{\frak{t}^\sigma}.
\end{align*}
Then $\beta_i$ form a set of simple roots for $\frak{g}^\sigma_\bb{C}$
 and the corresponding Dynkin diagram is:
\[\begin{xy}
\ar@{-} (0,0) *+!D{\beta_1} *{\circ}="A"; 
 (10,0)  *+!D{\beta_2} *{\circ}="B"
\ar@{-} "B";  (20,0)
\ar@{.} (20,0); (30,0) 
\ar@{-} (30,0); (40,0) *+!D{\beta_{n-1}} *{\circ}="F"
\ar@{=>} "F"; (50,0)  *+!D{\beta_n} *{\circ}
\end{xy}\]
We have $\theta=1$ on $\frak{g}^\sigma_{\beta_i}$ for $i<n$ 
 and $\theta=-1$ on $\frak{g}^\sigma_{\beta_n}$.
Write $\mu_i$ for the corresponding fundamental weights.
\begin{thm}
For the symmetric pair $(\frak{g},\frak{g}^\sigma)
 =(\frak{so}(2,2n),\frak{so}(1,2n))$,
\begin{align}\label{eq:so2eso1e}
L^{\frak{g}}((n-1)\omega_1)|_{\frak{g}^\sigma}
\simeq
A_{\frak{q}'(1)}((-n+1)\mu_1)\simeq 
A_{\frak{q}'(n)}(-2\mu_n),
\end{align}
where $\frak{q}'(i)$ is the maximal parabolic subalgebra of $\frak{g}^\sigma_\bb{C}$ corresponding to $\beta_i$. Further, the restriction $L^{\frak{g}}((n-1)\omega_1)|_{\frak{g}^\sigma}$ is isomorphic to a spherical complementary series representation of $\frak{so}(1,2n)$.
\end{thm}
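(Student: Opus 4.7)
By Theorem~\ref{thm:IrreducibilityForNonHolomorphicPairs} the restriction $L^\frak{g}((n-1)\omega_1)|_{\frak{g}^\sigma}$ is irreducible, and the argument in Section~\ref{sec:nonholirred} for the pair $(\frak{so}(2,N),\frak{so}(1,N))$ shows that its $\frak{k}^\sigma=\frak{so}(2n)$-type decomposition is
\[
L^\frak{g}((n-1)\omega_1)|_{\frak{k}^\sigma}\simeq\bigoplus_{k=0}^\infty\mathcal{H}^k(\bb{R}^{2n}).
\]
The plan is to verify that both $A_{\frak{q}'(1)}((-n+1)\mu_1)$ and $A_{\frak{q}'(n)}(-2\mu_n)$ are non-zero unitarizable $(\frak{g}^\sigma,K^\sigma)$-modules with the same $\frak{k}^\sigma$-type decomposition and the same infinitesimal character; by \cite[Lemma~8.7]{Kob07} this forces each to be isomorphic to the irreducible restriction. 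The complementary series identification then reduces to matching infinitesimal characters with a spherical principal series of $SO_0(1,2n)$, and is in any case already known from Sepp\"{a}nen~\cite[Theorem~19]{Sep07i}.

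\textbf{Preliminaries on the parabolics.} The complexification $\frak{g}^\sigma_\bb{C}=\frak{so}(2n+1,\bb{C})$ is of type $B_n$ with highest root $\beta_1+2\beta_2+\cdots+2\beta_n$, so the nilradical of $\frak{q}'(1)$ is abelian while that of $\frak{q}'(n)$ is two-step. A direct computation with $\rho(\frak{u}')=\tfrac{n}{2}(\epsilon_1+\cdots+\epsilon_n)$ for $\frak{q}'(n)$, and the analogous one for $\frak{q}'(1)$, shows that both parameters $(-n+1)\mu_1$ and $-2\mu_n$ are weakly fair for $n\geq 3$; hence by Theorem~\ref{thm:aqfact} both $A_\frak{q}(\lambda)$'s are unitarizable or zero, and Proposition~\ref{abelnil} gives outright irreducibility (or vanishing) for $A_{\frak{q}'(1)}((-n+1)\mu_1)$.

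\textbf{Blattner computation.} Apply Theorem~\ref{Blattner}, invoking Remark~\ref{rem:ktype} whenever the dominance hypothesis is satisfied. For $\frak{q}'(n)$ the Levi $\frak{l}'\simeq\frak{u}(n)$ is compactly embedded in $\frak{g}^\sigma$, so $\frak{l}'\cap\frak{k}'=\frak{l}'$ and $\frak{u}'\cap\frak{p}'_\bb{C}$ is an irreducible $\frak{u}(n)$-summand of $\frak{p}'_\bb{C}\simeq\bb{C}^n\oplus\overline{\bb{C}^n}$; thus $S(\frak{u}'\cap\frak{p}'_\bb{C})\simeq\bigoplus_{k\geq 0}S^k(\bb{C}^n)$, and after the shift by $\lambda+2\rho(\frak{u}'\cap\frak{p}'_\bb{C})$ one compares the result with the classical branching $\frak{so}(2n)\downarrow\frak{u}(n)$ for spherical harmonics to recover the $\frak{k}^\sigma$-type decomposition $\bigoplus_{k\geq 0}\mathcal{H}^k(\bb{R}^{2n})$. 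For $\frak{q}'(1)$ the Levi is not compactly embedded but the abelian nilradical makes the corresponding calculation (now with $\frak{u}'\cap\frak{p}'_\bb{C}$ as an $\frak{l}'\cap\frak{k}'$-module of dimension $2n-1$) simpler and yields the same decomposition. With matching $\frak{k}^\sigma$-types and matching infinitesimal characters (Theorem~\ref{thm:infchar}), unitarizability forces each $A_\frak{q}(\lambda)$ to coincide with the irreducible restriction.

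\textbf{Main obstacle.} The delicate step is the Blattner-type computation for $A_{\frak{q}'(n)}(-2\mu_n)$, since $-2\mu_n$ is not in the good range and the Weyl-group sum in Theorem~\ref{Blattner} could in principle produce cancellations; one handles this by verifying the dominance hypothesis of Remark~\ref{rem:ktype} for the weights $\mu$ with $m(\mu)\neq 0$, reducing the sum to a single non-trivial term, a finite combinatorial check. Once this is settled, the remaining identifications are formal consequences of the irreducibility of the restriction and the uniqueness of the irreducible spherical unitary representation of $SO_0(1,2n)$ with a prescribed infinitesimal character.
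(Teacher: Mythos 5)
Your overall architecture --- compute the $K^\sigma$-types of the two modules $A_{\frak{q}'(i)}(\lambda)$ by the Blattner formula, check weak fairness and irreducibility, and then match them against the irreducible restriction --- is the same as the paper's, and your computations of $\rho(\frak{u}')$, of the weakly fair ranges, and of the abelian/two-step structure of the two nilradicals are correct. But the step where you actually conclude the isomorphism has two problems. First, \cite[Lemma 8.7]{Kob07} is not the right tool here: that lemma identifies two unitary representations decomposing into irreducible \emph{lowest weight} modules of a Hermitian subalgebra from their $K^\sigma$-type decompositions, and $\frak{g}^\sigma=\frak{so}(1,2n)$ is not of Hermitian type (this is precisely a non-holomorphic pair). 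Indeed $K^\sigma$-types alone cannot suffice: every spherical complementary series representation of $SO_0(1,2n)$ has the same $\frak{so}(2n)$-type decomposition $\bigoplus_k\cal{H}^k(\bb{R}^{2n})$, so the $K^\sigma$-types do not single out which member of the family you have. The paper instead uses the fact that an irreducible spherical $(\frak{g}^\sigma,K^\sigma)$-module is determined by its infinitesimal character (\cite[\S 7]{Hel64}); you do invoke this uniqueness in your last sentence, and that is the correct replacement.

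Second, and this is the genuine gap: to apply that uniqueness you must know the infinitesimal character of the restriction $L^{\frak{g}}((n-1)\omega_1)|_{\frak{g}^\sigma}$ itself, and you never compute it. Theorem~\ref{thm:infchar} gives $\lambda+\rho$ for each $A_{\frak{q}'(i)}(\lambda)$ (and one checks the two parameters are Weyl-conjugate), but ``matching infinitesimal characters'' with the restriction is exactly the nontrivial input. The paper obtains it by the argument of Binegar--Zierau \cite{BZ94}: since the restriction is irreducible, $Z(U(\frak{g}^\sigma_\bb{C}))$ acts by a character, which is computed explicitly from the known $K$-type structure and the action on the spherical vector. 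Alternatively you could import it from Sepp\"anen's identification \cite[Theorem 19]{Sep07i} of the restriction as a complementary series at an explicit parameter --- but then you should say so and reorder your argument, since as written the complementary series identification comes after, and partly relies on, the very infinitesimal character matching you are trying to establish.
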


Next assume that $N=2n+1$ is odd. We take $\alpha_i$ as in Setting~\ref{so2o}.
We may assume $\sigma\frak{t}=\frak{t}$ and
\begin{align*}
&\sigma\alpha_{1}=-\beta,\quad
\sigma\alpha_{i}=\alpha_{i}\ (2\leq i\leq n+1),\quad
\sigma=1\text{ on }\frak{g}_{\alpha_i}\ (2\leq i\leq n+1).
\end{align*}
Then $\frak{t}^\sigma$ is a Cartan subalgebra of $\frak{k}^\sigma$.
Let $\frak{h}'$ be the centralizer of $\frak{t}^\sigma$ in $\frak{g}^\sigma$,
 which is a Cartan subalgebra of $\frak{g}^\sigma$.
We define a set of simple roots $\beta_1,\dots,\beta_{n+1}\in(\frak{h}'_\bb{C})^*$ for $\frak{g}^\sigma_\bb{C}$ such that 
\begin{align*}
\beta_i|_{\frak{t}^\sigma}
 =\alpha_{i+1}|_{\frak{t}^\sigma}\ (1\leq i\leq n-1),\quad
\beta_n|_{\frak{t}^\sigma}=\beta_{n+1}|_{\frak{t}^\sigma}
 =\alpha_{n+1}|_{\frak{t}^\sigma}
\end{align*}
and the corresponding Dynkin diagram is:
\[\begin{xy}
\ar@{-} (0,0) *+!D{\beta_1} *{\circ}="A";  (10,0)
\ar@{.} (10,0); (20,0) 
\ar@{-} (20,0); (30,0) *+!DR{\beta_{n-1}} *{\circ}="F"
\ar@{-} "F"; (35,8.6)  *+!L{\beta_{n+1}} *{\circ}
\ar@{-} "F"; (35,-8.6)  *+!L{\beta_{n}} *{\circ}
\end{xy}\]
We have $\theta\beta_i=\beta_i$ for $1\leq i\leq n-1$,
 $\theta\beta_n=\beta_{n+1}$ and 
 $\theta=1$ on $\frak{g}^\sigma_{\beta_i}$ for $1\leq i\leq n-1$.
Write $\mu_i$ for the corresponding fundamental weights.

\begin{thm}
For the symmetric pair $(\frak{g},\frak{g}^\sigma)
 =(\frak{so}(2,2n+1),\frak{so}(1,2n+1))$,
\begin{align*}
L^{\frak{g}}\Bigl(\Bigl(n-\frac{1}{2}\Bigr)\omega_1\Bigr)\Bigr|_{\frak{g}^\sigma}
\simeq
A_{\frak{q}'(1)}\Bigl(\Bigl(-n+\frac{1}{2}\Bigr)\mu_1\Bigr),
\end{align*}
where $\frak{q}'(1)$ is the maximal parabolic subalgebra of $\frak{g}^\sigma_\bb{C}$ corresponding to $\beta_1$. Further, the restriction $L^{\frak{g}}((n-\frac{1}{2})\omega_1)|_{\frak{g}^\sigma}$ is isomorphic to a spherical complementary series representation of $\frak{so}(1,2n+1)$.
\end{thm}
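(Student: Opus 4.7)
The plan closely parallels the argument for the even-dimensional pair $(\frak{so}(2,2n),\frak{so}(1,2n))$ just treated. Irreducibility of the restriction is already supplied by Theorem~\ref{thm:IrreducibilityForNonHolomorphicPairs}, so two tasks remain: identifying this irreducible $(\frak{g}^\sigma,K^\sigma)$-module (i) with the Zuckerman module $A_{\frak{q}'(1)}((-n+\tfrac{1}{2})\mu_1)$, and (ii) with a spherical complementary series representation of $\frak{so}(1,2n+1)$. Both identifications will proceed by matching $K^\sigma$-type structure together with infinitesimal character.

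For the $K^\sigma$-type side of (i), note that the highest root of $\frak{p}_+$ for $\frak{g}_\bb{C}$ of type $B_{n+1}$ is $\beta=\omega_2$, so the $\frak{k}$-decomposition \eqref{eq:Ktype} combined with the obvious restriction $\frak{so}(2)\oplus\frak{so}(2n+1)\downarrow\frak{so}(2n+1)$ yields
\begin{equation*}
L^{\frak{g}}\bigl((n-\tfrac{1}{2})\omega_1\bigr)\bigr|_{\frak{k}^\sigma}\simeq\bigoplus_{k=0}^\infty\cal{H}^k(\bb{R}^{2n+1}),
\end{equation*}
each spherical harmonic summand occurring with multiplicity one. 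On the Zuckerman side the Levi $\frak{l}'_\bb{C}$ of $\frak{q}'(1)$ is of type $D_n$ together with a one-dimensional center and the nilradical $\frak{u}'$ is abelian; a direct computation of $\frak{u}'\cap\frak{p}^\sigma_\bb{C}$ as an $(\frak{l}'\cap\frak{k}^\sigma)$-module, followed by Theorem~\ref{Blattner} together with Remark~\ref{rem:ktype}, produces the same $K^\sigma$-type decomposition with multiplicity one. Proposition~\ref{abelnil} also guarantees that $A_{\frak{q}'(1)}((-n+\tfrac{1}{2})\mu_1)$ is irreducible, provided one verifies that the parameter $(-n+\tfrac{1}{2})\mu_1$ lies in the weakly-fair range.

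To promote coincidence of $K^\sigma$-types to a $(\frak{g}^\sigma,K^\sigma)$-module isomorphism, I would compare infinitesimal characters: Theorem~\ref{thm:infchar} supplies the one for the Zuckerman module, while the infinitesimal character of $L^{\frak{g}}((n-\tfrac{1}{2})\omega_1)|_{\frak{g}^\sigma}$ is inherited from that of $L^{\frak{g}}((n-\tfrac{1}{2})\omega_1)$ via the embedding $\frak{h}'_\bb{C}\hookrightarrow\frak{h}_\bb{C}$ of fundamental Cartan subalgebras. Since an irreducible spherical $(\frak{g}^\sigma,K^\sigma)$-module is a Langlands quotient of a spherical principal series of $\frak{so}(1,2n+1)$ and so is determined up to isomorphism by its infinitesimal character, this settles (i). Statement (ii) then follows from the classification of irreducible unitarizable spherical Harish-Chandra modules of $\frak{so}(1,2n+1)$: apart from the trivial representation and a possible module at a reducibility endpoint, these are exactly the spherical complementary series, so the infinite-dimensional irreducible spherical module produced here must lie in that family, with parameter read off from the infinitesimal character.

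The main anticipated obstacle is the Blattner-formula bookkeeping: because the Cartan involution $\theta$ swaps the simple roots $\beta_n$ and $\beta_{n+1}$, the Cartan subalgebra $\frak{t}^\sigma$ of $\frak{k}^\sigma$ is strictly smaller than the fundamental Cartan $\frak{h}'$ of $\frak{g}^\sigma$. One must therefore carefully choose compatible positive systems on $\frak{k}^\sigma_\bb{C}$, $\frak{l}'\cap\frak{k}^\sigma$, and $\frak{g}^\sigma_\bb{C}$, and track the $(\frak{l}'\cap\frak{k}^\sigma)$-structure on $\frak{u}'\cap\frak{p}^\sigma_\bb{C}$ and its symmetric algebra through this passage before Theorem~\ref{Blattner} can be applied in the form required.
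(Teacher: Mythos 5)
Your overall strategy coincides with the paper's: irreducibility of the restriction from Theorem~\ref{thm:IrreducibilityForNonHolomorphicPairs}, then identification with $A_{\frak{q}'(1)}(\lambda)$ by showing both sides are spherical irreducible $(\frak{g}^\sigma,K^\sigma)$-modules with the same infinitesimal character (the paper invokes Helgason for the uniqueness of such a module, and simply cites Sepp\"{a}nen for the complementary-series statement rather than re-deriving the rank-one spherical unitary dual). There is, however, a genuine gap at the load-bearing step: you propose to obtain the $\frak{g}^\sigma$-infinitesimal character of the restriction as ``inherited \dots via the embedding $\frak{h}'_\bb{C}\hookrightarrow\frak{h}_\bb{C}$ of fundamental Cartan subalgebras.'' No such embedding exists here: $\frak{h}'=\frak{t}^\sigma+\frak{a}$ with $\frak{a}\subset\frak{p}^\sigma$, whereas the Cartan $\frak{t}$ of $\frak{g}$ used for $L^{\frak{g}}((n-\frac{1}{2})\omega_1)$ lies in $\frak{k}$; and even for nested Cartan subalgebras, Harish--Chandra parameters do not restrict (already $\rho_{\frak{g}}|_{\frak{g}^\sigma}\neq\rho_{\frak{g}^\sigma}$ for the trivial representation). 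Since everything hinges on this infinitesimal character, you need an actual computation: the paper follows the Binegar--Zierau method, computing the action of the center of $U(\frak{g}^\sigma_\bb{C})$ directly on the minimal $K^\sigma$-type, which is annihilated by the $\frak{u}'\cap\frak{p}^\sigma_\bb{C}$-part of the $\frak{g}^\sigma$-action; alternatively one may first invoke Sepp\"{a}nen's identification with a complementary series and read the parameter off from there.

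A second point concerns the Blattner bookkeeping you rightly flag as the main obstacle, and it is more delicate than a choice of positive systems. Here $\frak{u}'\cap\frak{p}^\sigma_\bb{C}$ is one-dimensional of $\frak{t}^\sigma$-weight $\mu_1|_{\frak{t}^\sigma}$, so $S(\frak{u}'\cap\frak{p}^\sigma_\bb{C})\otimes\bb{C}_{\lambda+2\rho(\frak{u}'\cap\frak{p}^\sigma_\bb{C})}$ has weights $(k-n+\frac{3}{2})\mu_1|_{\frak{t}^\sigma}$ with $k\geq 0$. These are not all dominant for $\Delta^+(\frak{k}^\sigma_\bb{C},\frak{t}^\sigma_\bb{C})$, so Remark~\ref{rem:ktype} does not apply and the full alternating sum of Theorem~\ref{Blattner} is required; worse, they are half-integral relative to the highest weights $k\mu_1|_{\frak{t}^\sigma}$ of $\mathcal{H}^k(\bb{R}^{2n+1})$, so the assertion that the computation ``produces the same $K^\sigma$-type decomposition with multiplicity one'' cannot be taken for granted and forces you to sort out precisely on which cover of $L'\cap K^\sigma$ the inducing character $\bb{C}_\lambda$ lives. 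Note that for the identification argument you only need sphericity of $A_{\frak{q}'(1)}(\lambda)$, i.e.\ occurrence of the trivial $K^\sigma$-type, not the full $K^\sigma$-spectrum; restricting attention to that single multiplicity reduces, but does not remove, this normalization issue.
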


\subsubsection{$\frak{sp}(2n,\bb{R})\downarrow\frak{sp}(n,\bb{C})$}
Let $(\frak{g},\frak{g}^\sigma)=(\frak{sp}(2n,\bb{R}),\frak{sp}(n,\bb{C}))$.
The complex metaplectic representation of $\frak{sp}(n,\bb{C})$ is not as well-known as its counterpart for $\frak{sp}(n,\bb{R})$. It can be realized on $L^2(\bb{C}^n)$ and splits into two irreducible pieces, the even and the odd part (see e.g \cite[page 161]{Vog87}). A possible construction is by restricting the metaplectic representation of $\frak{sp}(2n,\bb{R})$ on $L^2(\bb{R}^{2n})\simeq L^2(\bb{C}^n)$ to $\frak{sp}(n,\bb{C})$ whence the first part of the following result is immediate:

\begin{thm}\label{thm:su(2n,R)TOsp(n,C)}
For the symmetric pair $(\frak{g},\frak{g}^\sigma)
 =(\frak{sp}(2n,\bb{R}),\frak{sp}(n,\bb{C}))$ the restriction $L^{\frak{g}}(\frac{1}{2}\omega_{n})|_{\frak{g}^\sigma}$ is isomorphic to the even part of the metaplectic representation of $\frak{sp}(n,\bb{C})$. Further, the restriction is isomorphic to the small representation of $\frak{sp}(n,\bb{C})$ constructed in \cite{HKM}.
\end{thm}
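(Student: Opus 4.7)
\medskip

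The plan is to prove the two assertions separately, the first by realizing both representations concretely on $L^2$-spaces and the second by matching the restriction with the explicit model of \cite{HKM}.

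For the first assertion, I would start from the standard Schr\"odinger realization of the oscillator representation of $\widetilde{Sp}(2n,\bb{R})$ on $L^2(\bb{R}^{2n})$. Recall that $L^{\frak{sp}(2n,\bb{R})}(\tfrac{1}{2}\omega_n)$ is precisely the underlying $(\frak{g},K)$-module of the even part of this oscillator representation. Next I would fix a complex structure $J$ on $\bb{R}^{2n}$ compatible with the symplectic form so that $(\bb{R}^{2n},J)\simeq\bb{C}^n$, which induces the natural embedding $\frak{sp}(n,\bb{C})\hookrightarrow\frak{sp}(2n,\bb{R})$ as the centralizer of $J$. Identifying $L^2(\bb{R}^{2n})\simeq L^2(\bb{C}^n)$ as Hilbert spaces, I would check that the Schr\"odinger operators corresponding to elements of $\frak{sp}(n,\bb{C})$ reproduce the known generators of the complex metaplectic representation (this is essentially a direct substitution, since all relevant operators are quadratic in position and momentum). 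Finally, the even/odd parity with respect to $x\mapsto -x$ on $\bb{R}^{2n}$ agrees with the parity with respect to $z\mapsto -z$ on $\bb{C}^n$, so that passing to the even part on the left corresponds to passing to the even part of the complex metaplectic representation on the right.

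For the second assertion I would invoke the construction of \cite{HKM}, which attaches to the Jordan algebra $\op{Sym}(n,\bb{C})$ (viewed as a complex simple Jordan algebra of rank $n$) a small unitary representation of the conformal group $\frak{sp}(n,\bb{C})$ realized on an $L^2$-space over a Lagrangian submanifold of the minimal nilpotent orbit. The strategy is to verify that this representation coincides with the even part of the complex metaplectic representation obtained above, using any of the following matchings: (i) both have infinitesimal character equal to half the sum of positive roots, (ii) both have annihilator equal to the Joseph ideal (and hence attain the minimal Gelfand--Kirillov dimension for $\frak{sp}(n,\bb{C})$), and (iii) both contain the spherical vector as the lowest $K$-type. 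Combined with Theorem \ref{thm:IrreducibilityForNonHolomorphicPairs}, which gives irreducibility of the restriction, and the classification of irreducible unitarizable representations of $\frak{sp}(n,\bb{C})$ with minimal Gelfand--Kirillov dimension, the desired isomorphism would follow.

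The main obstacle will be the second assertion, and more precisely the explicit comparison between the HKM construction and the Schr\"odinger realization of the complex metaplectic representation on $L^2(\bb{C}^n)$. In the HKM framework the generators of $\frak{sp}(n,\bb{C})$ act by Bessel-type operators on the minimal nilpotent orbit, while in the Schr\"odinger picture they act by quadratic differential operators on $\bb{C}^n$; bridging these two models typically requires an explicit intertwining operator (a Fourier--Bessel transform), and ensuring that the even part on the Schr\"odinger side is the entire image on the HKM side is the delicate point. Once this identification is in place, the remaining verifications (irreducibility, matching of $K$-types, and minimality of Gelfand--Kirillov dimension) are routine consequences of the structural results already quoted in the paper.
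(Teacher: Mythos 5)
Your treatment of the first assertion matches the paper's: the complex metaplectic representation of $\frak{sp}(n,\bb{C})$ is \emph{constructed} by restricting the real metaplectic representation on $L^2(\bb{R}^{2n})\simeq L^2(\bb{C}^n)$ to the centralizer of a compatible complex structure, and the even parts correspond; the paper simply declares this part ``immediate,'' so your extra detail is fine but not a different argument. For the second assertion your plan is heavier than necessary, and the ``main obstacle'' you identify is not actually an obstacle in the paper's proof. The paper does not construct any explicit intertwining operator (Fourier--Bessel transform) between the HKM model and the Schr\"odinger model, nor does it invoke the Joseph ideal or a classification of minimal-GK-dimension unitary representations. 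Instead it uses the following clean criterion (Helgason, \cite[\S 7]{Hel64}): two \emph{spherical} irreducible representations of a classical group with the same infinitesimal character are isomorphic. Both the restriction $L^{\frak{g}}(\frac12\omega_n)|_{\frak{g}^\sigma}$ (irreducible by Theorem~\ref{thm:IrreducibilityForNonHolomorphicPairs}, spherical since the lowest $K$-type restricts to the trivial $K^\sigma$-type) and the HKM representation are spherical and irreducible, so one only has to compare infinitesimal characters, which is done via \cite[Theorem 3.7]{HKM}. Your ingredient (iii) together with a corrected version of (i) is exactly this argument; once you have it, the delicate model comparison you worry about disappears entirely.

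One concrete inaccuracy: your claim in (i) that both representations have infinitesimal character equal to half the sum of positive roots is not right — that is the infinitesimal character of the trivial representation, not of the minimal one. What is needed (and all that is needed) is that the two infinitesimal characters agree \emph{with each other}; the actual value is a specific non-$\rho$ parameter computable from \cite[Theorem 3.7]{HKM} on one side and from the lowest-weight data of $L^{\frak{g}}(\frac12\omega_n)$ on the other. With that correction, and replacing the explicit-intertwiner step by Helgason's uniqueness of spherical irreducible representations with a given infinitesimal character, your outline becomes the paper's proof.
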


\subsubsection{$\frak{e}_{6(-14)}\downarrow\frak{f}_{4(-20)}$}

Let $(\frak{g},\frak{g}^\sigma)=(\frak{e}_{6(-14)},\frak{f}_{4(-20)})$. We take $\alpha_i$ as in Setting~\ref{e6(-14)}.
We may assume $\sigma\frak{t}=\frak{t}$ and
\begin{align*}
&\sigma\alpha_1=\alpha_1,\quad \sigma\alpha_2=\alpha_5,\quad
\sigma\alpha_3=\alpha_3,\quad\sigma\alpha_4=\alpha_4,\quad
\sigma\alpha_6=-\beta,\\
&\sigma=1\text{ on }\frak{g}_{\alpha_i}\ (i=1,3,4).
\end{align*}
Then $\frak{t}^\sigma$ is a Cartan subalgebra of $\frak{k}^\sigma$ and of $\frak{g}^\sigma$.
We put 
\begin{align*}
\beta_1:=\alpha_3|_{\frak{t}^\sigma},\quad
\beta_2:=\alpha_4|_{\frak{t}^\sigma},\quad
\beta_3:=\alpha_5|_{\frak{t}^\sigma},\quad
\beta_4:=(\alpha_1+\alpha_3+\alpha_4+\alpha_5+\alpha_6)|_{\frak{t}^\sigma},
\end{align*}
and the corresponding Dynkin diagram is:
\[\begin{xy}
\ar@{-} (0,0) *+!D{\beta_1} *{\circ}="A";  (10,0) *+!D{\beta_2} *{\circ}="B"
\ar@{=>} "B"; (20,0)  *+!D{\beta_3} *{\circ}="C"
\ar@{-} "C"; (30,0) *+!D{\beta_4} *{\circ}="D"
\end{xy}\]
We have $\theta=1$ on $\frak{g}^\sigma_{\beta_i}$ for $i=1,2,3$ 
 and $\theta=-1$ on $\frak{g}^\sigma_{\beta_4}$.
Write $\mu_i$ for the corresponding fundamental weights.

\begin{thm}[{\cite[Theorem 3.4]{BZ94}}]\label{thm:e6(-14)TOf4(-20)}
For the symmetric pair $(\frak{g},\frak{g}^\sigma)
 =(\frak{e}_{6(-14)},\frak{f}_{4(-20)})$,
\begin{align*}
L^{\frak{g}}(3\omega_6)|_{\frak{g}^\sigma}
\simeq
A_{\frak{q}'(1)}(-2\mu_1),
\end{align*}
where $\frak{q}'(1)$ is the maximal parabolic subalgebra of $\frak{g}^\sigma_\bb{C}$ corresponding to $\beta_1$.
\end{thm}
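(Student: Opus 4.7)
The plan is to follow the strategy used throughout Section~\ref{subsec:aqlambda}: match $\frak{k}^\sigma$-type decompositions on both sides and invoke \cite[Lemma 8.7]{Kob07}. Since $L^{\frak{g}}(3\omega_6)|_{\frak{g}^\sigma}$ has already been shown to be irreducible in Theorem~\ref{thm:IrreducibilityForNonHolomorphicPairs}, the identification with $A_{\frak{q}'(1)}(-2\mu_1)$ will be complete once the two modules agree as $\frak{so}(9)$-modules.

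First I would verify that the parameter $-2\mu_1$ is weakly fair for $\frak{q}'(1)$, so that $A_{\frak{q}'(1)}(-2\mu_1)$ is unitarizable (and in particular well-defined) by Theorem~\ref{thm:aqfact}. Next, I would analyze the Levi decomposition $\frak{q}'(1) = \frak{l}'_\bb{C} + \frak{u}'$: the Dynkin subdiagram spanned by $\beta_2, \beta_3, \beta_4$ is of type $C_3$, and since $\beta_2, \beta_3$ are compact while $\beta_4$ is non-compact, the semisimple part of $\frak{l}'$ is a real form of $\frak{sp}(3,\bb{C})$ with maximal compact subalgebra $\frak{sp}(2) \oplus \frak{sp}(1)$, giving $\frak{l}' \cap \frak{k}^\sigma \simeq \frak{sp}(2) \oplus \frak{sp}(1) \oplus \frak{u}(1)$. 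From this one describes $\frak{u}' \cap \frak{p}'_\bb{C}$ as an $(\frak{l}' \cap \frak{k}^\sigma)$-module, decomposes the symmetric algebra $S(\frak{u}' \cap \frak{p}'_\bb{C}) \otimes \bb{C}_{-2\mu_1 + 2\rho(\frak{u}' \cap \frak{p}'_\bb{C})}$, and applies Theorem~\ref{Blattner} (or Remark~\ref{rem:ktype} if all relevant weights turn out to be $\frak{k}^\sigma$-dominant) to read off the $\frak{so}(9)$-type formula of $A_{\frak{q}'(1)}(-2\mu_1)$.

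For the left-hand side, the $K$-type formula \eqref{eq:Ktype} gives $L^{\frak{g}}(3\omega_6)|_K \simeq \bigoplus_{k \geq 0} F^{\frak{k}}(3\omega_6 + k\omega_2)$ with $\frak{k} \simeq \frak{so}(10) \oplus \frak{so}(2)$ and $\beta = \omega_2$. By the Borel--Weil argument already used in the proof of Theorem~\ref{thm:IrreducibilityForNonHolomorphicPairs} for this pair, these $K$-types restrict irreducibly to $\frak{k}^\sigma \simeq \frak{so}(9)$ via $SO(10)/U(5) \simeq SO(9)/U(4)$, and their highest weights can be read off directly from the line-bundle data as explicit combinations of the fundamental weights of $\frak{so}(9)$. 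Matching these with the $\frak{so}(9)$-types computed in the previous paragraph and invoking \cite[Lemma 8.7]{Kob07} yields the claimed isomorphism.

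The main obstacle is the explicit bookkeeping: translating between the $\alpha_i$-coordinates of $\frak{e}_{6(-14)}$ and the $\beta_i$-coordinates of $\frak{f}_{4(-20)}$, correctly identifying $\rho(\frak{u}' \cap \frak{p}'_\bb{C})$ and the $(\frak{l}' \cap \frak{k}^\sigma)$-structure of $S(\frak{u}' \cap \frak{p}'_\bb{C})$, and carrying out the $\frak{so}(10) \to \frak{so}(9)$ branching for the half-spin-plus-adjoint combinations $3\omega_6 + k\omega_2$. These computations are performed in detail in the proof of \cite[Theorem 3.4]{BZ94}, which our argument would parallel.
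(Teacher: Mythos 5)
The core of your argument --- matching $\frak{k}^\sigma$-types on both sides and then invoking \cite[Lemma 8.7]{Kob07} --- does not work for this pair. That lemma identifies two modules from their $\frak{k}^\sigma$-structure only when both are direct sums of irreducible unitarizable lowest (or highest) weight modules of a subalgebra of Hermitian type; this is why the paper uses it exclusively for the holomorphic pairs in Section~\ref{subsec:aqlambda}. Here $\frak{g}^\sigma=\frak{f}_{4(-20)}$ is not of Hermitian type ($\frak{k}^\sigma\simeq\frak{so}(9)$ is semisimple), $A_{\frak{q}'(1)}(-2\mu_1)$ is not a lowest weight module, and in general two irreducible $(\frak{g}^\sigma,K^\sigma)$-modules with identical $K^\sigma$-spectra need not be isomorphic. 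So even after you have matched the $\frak{so}(9)$-types perfectly, the theorem is not yet proved.

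The paper (following \cite{BZ94}, to which the theorem is attributed, and whose method is spelled out for $(\frak{su}(2m,2n),\frak{sp}(m,n))$ in the Proofs paragraph of Section~\ref{sec:idnonhol}) closes this gap differently: both sides are irreducible --- the left-hand side by Theorem~\ref{thm:IrreducibilityForNonHolomorphicPairs}, the right-hand side by combining the Blattner computation (multiplicity-free, pairwise distinct $\frak{so}(9)$-types) with the argument of Proposition~\ref{prop:irred} --- both are \emph{spherical} (the lowest $K$-type $F^{\frak{k}}(3\omega_6)$ is a character of $K$ since $3\omega_6$ vanishes on $[\frak{k},\frak{k}]\cap\frak{t}$, hence restricts trivially to $K^\sigma$, and the trivial $\frak{so}(9)$-type occurs in $A_{\frak{q}'(1)}(-2\mu_1)$), and both have the same infinitesimal character, namely $-2\mu_1+\rho_{\frak{g}^\sigma}$ up to the Weyl group (Theorem~\ref{thm:infchar} on one side, the computation of \cite{BZ94} on the other). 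An irreducible spherical representation is determined by its infinitesimal character by \cite[\S 7]{Hel64}, which yields the isomorphism. Your $K^\sigma$-type bookkeeping is still needed, but only to establish irreducibility and sphericity of $A_{\frak{q}'(1)}(-2\mu_1)$; the missing ingredient in your proposal is the comparison of infinitesimal characters together with a uniqueness statement that actually applies in the non-Hermitian setting.
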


\subsubsection{Proofs}

The fact that the restrictions for $(\frak{so}(2,N),\frak{so}(1,N))$ are spherical complementary series was proved by Sepp\"{a}nen~\cite[Theorem 19]{Sep07i}.

Let us next treat the identifications with $A_\frak{q}(\lambda)$. The case $(\frak{e}_{6(-14)},\frak{f}_{4(-20)})$ was treated in \cite{BZ94}. Their proof can be applied to the other cases as well. Let e.g $(\frak{g},\frak{g}^\sigma)=(\frak{su}(2m,2n),\frak{sp}(m,n))$. Using the argument in \cite{BZ94}, we see that the restriction $L^{\frak{g}}(\omega_{2m})|_{\frak{g}^\sigma}$ has infinitesimal character $\mu_{2n}-\rho_{\frak{g}^\sigma}$. Here, $\rho_{\frak{g}^\sigma}$ is half the sum of positive roots in $\frak{g}^\sigma_\bb{C}$. Since $\mu_{2n}-\rho_{\frak{g}^\sigma}$ and $-2n\mu_1+\rho_{\frak{g}^\sigma}$ lie in the same Weyl group orbit, both sides of \eqref{eq:susp} have the same infinitesimal character. By Theorem~\ref{Blattner}, 
\begin{equation*}
A_{\frak{q}'(1)}(-2n\mu_1)|_{\frak{k}^\sigma}
\simeq \bigoplus_{k=0}^{\infty} F^{\frak{k}^\sigma}(k\mu_1+k\mu_{m+1}).
\end{equation*}
Hence by the proof of Proposition~\ref{prop:irred}, $A_{\frak{q}'(1)}(-2n\mu_1)$ is irreducible. Therefore, $L^{\frak{g}}(\omega_{2m})|_{\frak{g}^\sigma}$ and $A_{\frak{q}'(1)}(-2n\mu_1)$ are spherical irreducible $(\frak{g}^\sigma,K^\sigma)$-modules and have the same infinitesimal character, which implies that they are isomorphic (see \cite[\S 7]{Hel64}). The case $(\frak{g},\frak{g}^\sigma)=(\frak{so}(2,N),\frak{so}(1,N))$ can be proved in the same way.

We note that for the pairs $(\frak{su}(2m,2n),\frak{sp}(m,n))$ and $(\frak{so}(2,2n),\frak{so}(1,2n))$, the formulas \eqref{eq:susp} and \eqref{eq:so2eso1e} can also be derived by using ${\cal D}$-modules (see \cite{Os11}).

Finally, the identification with representations studied in \cite{HKM} works in the same way as the identification with $A_\frak{q}(\lambda)$. Since both representations in question are spherical irreducible representations of classical groups, it suffices by \cite[\S 7]{Hel64} to show that their infinitesimal characters agree. This can be done using \cite[Theorem 3.7]{HKM}.

\section{Associated varieties and Kobayashi's conjecture}\label{sec:AssVar}

We study a conjecture by Kobayashi for the associated varieties of discrete components in the restriction of $\frak{g}$-representations. We confirm the conjecture for all discretely decomposable restrictions of minimal holomorphic representations to symmetric subgroups. 

Let $G$ be a real reductive group and $G'$ a reductive subgroup.
Take a maximal compact subgroup $K$ of $G$ such that $K':=G'\cap K$ is a maximal compact subgroup of $G'$.
For a $\frak{g}$-module $X$ of finite length we denote by $\cal{V}_{\frak{g}_{\bb C}}(X)\subseteq\frak{g}_{\bb C}^*$ its associated variety in the sense of Vogan~\cite{Vog91}. Accordingly we will use $\cal{V}_{\frak{g}'_{\bb C}}(Y)\subseteq(\frak{g}'_{\bb C})^*$ for the associated variety of a $\frak{g}'$-module $Y$ of finite length. Let $\pr_{\frak{g}\to\frak{g}'}:\frak{g}_{\bb C}^*\to(\frak{g}'_{\bb C})^*$ denote the restriction dual to the embedding $\frak{g}'_{\bb C}\to\frak{g}_{\bb C}$. Then Kobayashi conjectured:

\begin{conj}[{\cite[Conjecture 5.11]{Kob11}}]\label{conj:Kobayashi}
Let $X$ be an irreducible unitarizable $(\frak{g},K)$-module and $Y$ an irreducible $(\frak{g}',K')$-module. If $\Hom_{\frak{g}'}(Y,X)\neq0$ then
\begin{align}\label{eq:assvar}
 \pr_{\frak{g}\to\frak{g}'}(\cal{V}_{\frak{g}_{\bb C}}(X)) = \cal{V}_{\frak{g}'_{\bb C}}(Y). 
\end{align}
\end{conj}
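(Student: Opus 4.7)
The plan is to verify \eqref{eq:assvar} in the two settings announced in the introduction (highest weight modules restricted to holomorphic pairs, and $L^{\frak g}(c\zeta)$ restricted to non-holomorphic symmetric pairs) rather than attack the conjecture in full generality. One inclusion, namely
\[\cal{V}_{\frak{g}'_{\bb C}}(Y)\subseteq\pr_{\frak{g}\to\frak{g}'}(\cal{V}_{\frak{g}_{\bb C}}(X)),\]
I would derive from a standard good-filtration argument: equip $X$ with a $K$-invariant good filtration and $Y$ with the induced $K'$-invariant filtration, note that $\gr Y$ is a subquotient of $\gr X$ viewed as an $S(\frak{g}'_{\bb C})$-module through $S(\frak{g}'_{\bb C})\hookrightarrow S(\frak{g}_{\bb C})$, and take supports. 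The substance of the conjecture therefore lies in the reverse inclusion.

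For holomorphic pairs, both $X=L^{\frak g}(c\zeta)$ and every irreducible summand $Y$ of $X|_{\frak{g}^\sigma}$ are unitarizable lowest weight modules, so their associated varieties are closures of single $K_{\bb C}$- and $K^\sigma_{\bb C}$-orbits in $\frak{p}_-^*$ and $(\frak{p}^\sigma_-)^*$ respectively. Under a $K$-invariant identification $\frak{g}^*_{\bb C}\simeq\frak{g}_{\bb C}$, the map $\pr_{\frak{g}\to\frak{g}^\sigma}$ restricts on $\frak{p}_-$ to the $K^\sigma_{\bb C}$-equivariant projection $\frak{p}_-\to\frak{p}^\sigma_-$ coming from the $\sigma$-eigenspace decomposition. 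The orbit underlying $\cal{V}_{\frak{g}_{\bb C}}(X)$ is explicit (rank-one Jordan-algebraic elements for classical $\frak g$), and the orbit underlying $\cal{V}_{\frak{g}^\sigma_{\bb C}}(Y)$ can be read off the leading $K^\sigma$-type via the branching laws of Section~\ref{sec:branchhol} together with Theorem~\ref{Blattner}. A dimension comparison, combined with the inclusion above, yields equality.

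For non-holomorphic pairs the restriction is irreducible by Theorem~\ref{thm:IrreducibilityForNonHolomorphicPairs}, so $Y=X|_{\frak{g}^\sigma}$. In three of the four cases, Section~\ref{sec:idnonhol} identifies $Y$ with some $A_{\frak{q}'}(\lambda)$, whose associated variety equals $\overline{K^\sigma_{\bb C}\cdot(\frak{u}'\cap\frak{p}^\sigma_{\bb C})}$ by a theorem of Vogan; in the remaining case $(\frak{sp}(2n,\bb{R}),\frak{sp}(n,\bb{C}))$, $Y$ is the even part of the complex metaplectic and its associated variety is the closure of a single nilpotent $K^\sigma_{\bb C}$-orbit on $\frak{p}^\sigma_{\bb C}$. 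In each case I would compute $\pr_{\frak{g}\to\frak{g}^\sigma}(\cal{V}_{\frak{g}_{\bb C}}(X))$ directly from the explicit parametrization of the minimal nilpotent $K_{\bb C}$-orbit acted on by $\sigma$, and match with the above.

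The main obstacle is the projection step for non-holomorphic pairs: since $\sigma$ now interchanges pieces of $\frak{p}_+$ and $\frak{p}_-$, one cannot reduce to a single irreducible $K^\sigma_{\bb C}$-module and must work in the full $\theta$-stable piece $\frak{p}^\sigma_{\bb C}\subset\frak{g}^\sigma_{\bb C}$. Here the structural input of the Kobayashi--Oshima classification \cite[Theorem 5.1]{KO} becomes essential: it forces $(\frak g,\frak g^\sigma)$ to have very small Gelfand--Kirillov dimension, so that the image $\pr_{\frak{g}\to\frak{g}^\sigma}(\cal{V}_{\frak{g}_{\bb C}}(X))$ meets only finitely many $K^\sigma_{\bb C}$-orbits and a dimension count together with $K^\sigma_{\bb C}$-equivariance suffices to conclude.
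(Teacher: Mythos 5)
Your proposal has the two inclusions of \eqref{eq:assvar} the wrong way around, and this is not a cosmetic issue. The inclusion $\pr_{\frak{g}\to\frak{g}'}(\cal{V}_{\frak{g}_{\bb C}}(X))\subseteq\cal{V}_{\frak{g}'_{\bb C}}(Y)$ is the one known in complete generality (\cite[Theorem 3.1]{Kob98ii}); the substance of Conjecture~\ref{conj:Kobayashi} is the opposite inclusion $\cal{V}_{\frak{g}'_{\bb C}}(Y)\subseteq\pr_{\frak{g}\to\frak{g}'}(\cal{V}_{\frak{g}_{\bb C}}(X))$ --- precisely the one you dismiss as ``standard.'' Consequently the bulk of your plan (explicit orbit parametrizations, dimension counts, identification of the associated variety of $A_{\frak{q}'}(\lambda)$ with $\overline{K^\sigma_{\bb C}\cdot(\frak{u}'\cap\frak{p}^\sigma_{\bb C})}$) is aimed at re-deriving a result that is already available, while the actual difficulty is compressed into a single sentence.

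That sentence contains the genuine gap. If you give $Y\subseteq X$ the filtration induced from a good filtration of $X$, say $Y_n=Y\cap U_n(\frak{g}_{\bb C})X_0$, then indeed $\gr Y$ embeds into $\gr X$ over $S(\frak{g}'_{\bb C})$ and its support lands in $\pr_{\frak{g}\to\frak{g}'}(\cal{V}_{\frak{g}_{\bb C}}(X))$; but this support computes $\cal{V}_{\frak{g}'_{\bb C}}(Y)$ only if the induced filtration is a \emph{good} filtration of $Y$ over $U(\frak{g}'_{\bb C})$, which amounts to the compatibility condition $U_n(\frak{g}_{\bb C})X_0\cap U(\frak{g}'_{\bb C})X_0=U_n(\frak{g}'_{\bb C})X_0$. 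This is exactly the hypothesis of Lemma~\ref{lem:compatifilt}, it is where all the work lies, and it is false for general pairs (otherwise the conjecture would be a theorem). The paper verifies it separately in the two settings: for holomorphic pairs by decomposing $X$ into $\ad(z')$-eigenspaces, so that both $U_n(\frak{g}_{\bb C})X^{\frak{p}_-}$ and $U_n(\frak{g}'_{\bb C})X^{\frak{p}_-}$ are cut out by the same eigenvalue bound; and for the non-holomorphic symmetric pairs by combining the $K$-type string $\bigoplus_k F^{\frak{k}}(c\zeta+k\beta)$ with the irreducibility mechanism of Proposition~\ref{prop:irred} to get $U_n(\frak{g}^\sigma_{\bb C})X_0=U_n(\frak{g}_{\bb C})X_0$. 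Your proposal supplies neither verification, so the inclusion you actually need remains unproved. (Your case-by-case computations could in principle also yield it, since you would be computing both sides of \eqref{eq:assvar} explicitly, but you would then owe the reader the associated varieties of all the summands appearing in the branching laws of Section~\ref{sec:branchhol}, which the filtration argument avoids entirely.)
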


The inclusion $\pr_{\frak{g}\to\frak{g}'}(\cal{V}_{\frak{g}_{\bb C}}(X)) \subset \cal{V}_{\frak{g}'_{\bb C}}(Y)$
was proved in \cite[Theorem 3.1]{Kob98ii}.  The other inclusion is known to hold in the following cases.

\begin{prop}[{\cite[Proposition 5.12]{Kob11}}]\label{prop:Kobayashi}
Conjecture \ref{conj:Kobayashi} is true for the following four cases:
\begin{enumerate}
\item $X$ is the oscillator representation of $\frak{g}=\frak{sp}(n,\bb{R})$ and $\frak{g}'=\frak{g}'_1\oplus\frak{g}'_2$ is 
 a compact dual pair in $\frak{g}$,
\item $X$ is the underlying $(\frak{g},K)$-module of the minimal representation of $O(p,q)$ with $p+q$ even and
 $(\frak{g},\frak{g}')$ is a symmetric pair (\cite{KO03}),
\item $X$ is a (generalized) Verma module and $(\frak{g},\frak{g}')$ is a symmetric pair (\cite{Kob12}),
\item $X=A_\frak{q}(\lambda)$ and $(\frak{g},\frak{g}')$ is a symmetric pair (\cite{Os11}).
\end{enumerate}
\end{prop}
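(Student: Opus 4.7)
Since the four cases in Proposition~\ref{prop:Kobayashi} come from quite different settings and different sources, my plan is to handle them case by case, exploiting a concrete model of $X$ in each situation to compute $\cal{V}_{\frak{g}'_{\bb C}}(Y)$ directly and compare it with $\pr_{\frak{g}\to\frak{g}'}(\cal{V}_{\frak{g}_{\bb C}}(X))$. In all four cases the inclusion $\pr_{\frak{g}\to\frak{g}'}(\cal{V}_{\frak{g}_{\bb C}}(X))\subseteq \cal{V}_{\frak{g}'_{\bb C}}(Y)$ is given by \cite[Theorem 3.1]{Kob98ii}, so the task reduces to establishing the reverse inclusion.

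For case (1), I would use the explicit Schr\"odinger realization of the oscillator representation of $\frak{sp}(n,\bb{R})$, for which $\cal{V}_{\frak{g}_{\bb C}}(X)$ is the closure of the minimal nilpotent orbit (rank $\leq 1$ symmetric matrices). For a compact dual pair $(\frak{g}'_1,\frak{g}'_2)$ inside $\frak{g}$, Howe's correspondence makes the decomposition of $X$ completely explicit as a sum $\bigoplus_\pi \pi\boxtimes\theta(\pi)$, and the associated variety of each $\theta(\pi)$ on the non-compact factor is a known nilpotent orbit closure that can be matched with $\pr_{\frak{g}\to\frak{g}'}$ applied to the minimal orbit. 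For case (2), the strategy is the same, but invoking the explicit $L^2$-model of the minimal representation of $O(p,q)$ from \cite{KO03}, whose associated variety is the closure of the minimal $K_{\bb C}$-orbit in $(\frak{g}/\frak{k})_{\bb C}^*$; one combines the known branching of the minimal representation to symmetric subgroups with a direct description of $\cal{V}_{\frak{g}'_{\bb C}}(Y)$ for each summand $Y$.

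For case (3), I would use the fact that the generalized Verma module $N^\frak{g}(\lambda)$ has associated variety equal to the closure of the nilradical $\frak{u}^*$ of the defining parabolic, together with Kobayashi's analysis in \cite{Kob12} of how such modules branch under symmetric pairs. Concretely, one identifies each irreducible $\frak{g}'$-constituent as a quotient of a generalized Verma module for $\frak{g}'$, reads off its associated variety from the nilradical, and then checks by a dimension/orbit comparison that this matches $\pr_{\frak{g}\to\frak{g}'}(\overline{\frak{u}^*})$. For case (4), I would follow \cite{Os11}: realize $A_\frak{q}(\lambda)$ as a $\cal{D}$-module on a partial flag variety (or, equivalently, on $K_{\bb C}/(L\cap K)_{\bb C}$), so that $\cal{V}_{\frak{g}_{\bb C}}(A_\frak{q}(\lambda)) = \overline{K_{\bb C}\cdot(\frak{u}\cap\frak{p}_{\bb C})^*}$; the key input is then that restriction along $\frak{g}'\hookrightarrow\frak{g}$ corresponds geometrically to intersecting this orbit closure with $(\frak{g}'_{\bb C})^*$, whose image under projection gives the associated variety of each $\frak{g}'$-constituent.

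The main obstacle, uniformly across the four cases, is the reverse inclusion: bounding $\cal{V}_{\frak{g}'_{\bb C}}(Y)$ from above by the image of the $\frak{g}$-associated variety. This requires producing enough vectors in $Y$ whose $U(\frak{g}'_{\bb C})$-action has controlled leading terms, which is where the explicit models (oscillator representation, minimal representation, Verma module presentation, $\cal{D}$-module realization) are crucial: they provide compatible filtrations on $X$ and on the subspace generating $Y$ whose associated graded modules can be compared as modules over $S(\frak{g}_{\bb C})$ and $S(\frak{g}'_{\bb C})$, respectively. Once this compatibility of filtrations is set up in each case, the equality of associated varieties follows from a direct computation of supports on the nilpotent cone.
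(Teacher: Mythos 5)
The paper offers no proof of this proposition: it is imported verbatim from \cite[Proposition 5.12]{Kob11}, with the four cases established in the cited references (\cite{KO03} for (2), \cite{Kob12} for (3), \cite{Os11} for (4)), so there is no in-paper argument to compare yours against. Judged on its own terms, your text is a proof plan rather than a proof. You correctly isolate the only nontrivial direction (the inclusion $\pr_{\frak{g}\to\frak{g}'}(\cal{V}_{\frak{g}_{\bb C}}(X)) \supseteq \cal{V}_{\frak{g}'_{\bb C}}(Y)$, the other being \cite[Theorem 3.1]{Kob98ii}), and your closing paragraph --- build compatible filtrations on $X$ and on a generating subspace of $Y$ so that $\gr Y$ injects into $\gr X$ --- is precisely the mechanism this paper uses for its \emph{new} cases (Lemma~\ref{lem:compatifilt}). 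But in each of the four cases the step you defer is exactly the theorem.

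Concretely: in (1) and (2) the assertion that the associated variety of each discrete summand ``is a known nilpotent orbit closure that can be matched'' with the projected minimal orbit is the entire content of the statement, and nothing in your sketch produces that matching; you would need either the explicit determinantal descriptions of the associated varieties of the theta lifts (resp.\ of the summands in \cite{KO03}) together with a computation of $\pr_{\frak{g}\to\frak{g}'}$ on the minimal orbit, or a filtration argument of the type in Lemma~\ref{lem:compatifilt}. In (3) your description of $\cal{V}_{\frak{g}_{\bb C}}(N^{\frak{g}}(\lambda))$ is right, but the identification of the constituents as quotients of generalized Verma modules for $\frak{g}'$ and the ``dimension/orbit comparison'' are the substance of \cite{Kob12}, not a routine check. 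In (4) the geometric picture is also slightly off: the conjecture concerns the \emph{image} of $\cal{V}_{\frak{g}_{\bb C}}(X)$ under the quotient map $\frak{g}_{\bb C}^*\to(\frak{g}'_{\bb C})^*$, not an intersection with $(\frak{g}'_{\bb C})^*$ (which is not naturally a subspace of $\frak{g}_{\bb C}^*$ without invoking an invariant form and the decomposition $\frak{g}=\frak{g}'\oplus(\frak{g}')^{\perp}$), and the argument of \cite{Os11} is a genuinely involved analysis of $K'_{\bb C}$-orbits, not a formal consequence of the $\cal{D}$-module realization. So the proposal points at the right sources but leaves a genuine gap in every case.
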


If $X$ is an irreducible $(\frak{g},K)$-module,  $\cal{V}_{\frak{g}_{\bb C}}(X)\subseteq\frak{p}_{\bb C}^*$ is a union of a finite number of nilpotent $K_{\bb C}$-orbits in $\frak{p}_\bb{C}^*$. Identifying $\frak{p}_{\bb C}\simeq\frak{p}_{\bb C}^*$ by means of the Killing form we can view $\cal{V}_{\frak{g}_{\bb C}}(X)$ as a subvariety of $\frak{p}_{\bb C}$. If additionally $X$ is a lowest weight module then $\cal{V}_{\frak{g}_{\bb C}}(X)\subseteq\frak{p}_-$.

\subsection{The conjecture for holomorphic pairs}

We will prove that the conjecture is true if $X$ is a highest (or lowest) weight module and $(\frak{g},\frak{g}')$ is holomorphic in the following sense:
\begin{de}\label{de:holpair2}
Suppose that $\frak{g}$ and $\frak{g}'$ are of Hermitian type.
We say a pair $(\frak{g},\frak{g}')$ is {\it holomorphic} if the natural embedding $G'/K'\to G/K$ is holomorphic.
\end{de}
Note that the pair $(\frak{g},\frak{g}')$ is not assumed to be symmetric as in Definition~\ref{de:holpair}. If we write the decompositions of the tangent spaces of $G/K$ and $G'/K'$
 as $\frak{p}_\bb{C}=\frak{p}_++\frak{p}_-$ and $\frak{p}'_\bb{C}=\frak{p}'_++\frak{p}'_-$, respectively, then the condition for $(\frak{g},\frak{g}')$ to be holomorphic is equivalent to $\frak{p}'_+=\frak{p}'_\bb{C}\cap \frak{p}_+$ and $\frak{p}'_-=\frak{p}'_\bb{C}\cap \frak{p}_-$. Hence, Definition~\ref{de:holpair2} is compatible with Definition~\ref{de:holpair} for a symmetric pair.

If $(\frak{g},\frak{g}')$ is holomorphic, then any highest (or lowest) weight $(\frak{g},K)$-module $X$ is discretely decomposable as a $\frak{g}'$-module and we have:
\begin{thm}\label{thm:assvar}
Let $(\frak{g},\frak{g}')$ be a holomorphic pair and $X$ a highest (or lowest) weight $(\frak{g},K)$-module. 
Then Conjecture~\ref{conj:Kobayashi} is true.
\end{thm}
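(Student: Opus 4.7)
The plan is to reduce to the irreducible lowest weight case and then construct, via a filtration-theoretic argument, an injection $\gr_{F'} Y_i\hookrightarrow\gr_F X$ of associated graded modules whose $S(\frak{p}'_+)$-supports yield the inclusion opposite to the known one.

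By interchanging $\frak{p}_+$ and $\frak{p}_-$ if necessary, I reduce to $X$ an irreducible unitarizable lowest weight $(\frak{g},K)$-module, so $V:=X^{\frak{p}_-}$ is an irreducible $K$-module and $X=U(\frak{p}_+)V$. Since $(\frak{g},\frak{g}')$ is holomorphic, \cite[Theorem 7.4]{Kob98iii} gives a discrete decomposition $X|_{\frak{g}'}=\bigoplus_i Y_i$ into irreducible lowest weight $(\frak{g}',K')$-modules $Y_i=U(\frak{p}'_+)V_i$ with $V_i=Y_i^{\frak{p}'_-}$. Since the inclusion $\pr_{\frak{g}\to\frak{g}'}(\cal{V}_{\frak{g}_\bb{C}}(X))\subset\cal{V}_{\frak{g}'_\bb{C}}(Y_i)$ is already established by Kobayashi \cite[Theorem 3.1]{Kob98ii}, it remains to prove the reverse inclusion for each $i$.

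For this I would use the good filtration $F_n X:=U_n(\frak{p}_+)V$, which coincides with the filtration by $\ad(z')$-eigenvalue since $\ad(z')$ acts as $+1$ on $\frak{p}_+$ and by a scalar $\lambda_0$ on $V$. Then $\gr_F X$ is a finitely generated $K$-equivariant $S(\frak{p}_+)$-module whose support in $\frak{p}_-\simeq\frak{p}_+^*$ (via the Killing form) equals $\cal{V}_{\frak{g}_\bb{C}}(X)$. Viewed as an $S(\frak{p}'_+)$-module by restriction of scalars along $S(\frak{p}'_+)\hookrightarrow S(\frak{p}_+)$, its support in $\frak{p}'_-\simeq(\frak{p}'_+)^*$ becomes $\pr_{\frak{g}\to\frak{g}'}(\cal{V}_{\frak{g}_\bb{C}}(X))$, since the dual map $\frak{p}_+^*\twoheadrightarrow(\frak{p}'_+)^*$ corresponds under Killing identifications to the projection $\frak{p}_-\twoheadrightarrow\frak{p}'_-$ provided by the holomorphic-pair hypothesis. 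Analogously, the good $\frak{g}'$-filtration $F'_n Y_i:=U_n(\frak{p}'_+)V_i$ is the $\ad(z'')$-eigenvalue filtration on $Y_i$, where $z''\in\frak{k}'_\bb{C}$ is the analogue of $z'$ for $\frak{g}'$, with $S(\frak{p}'_+)$-support equal to $\cal{V}_{\frak{g}'_\bb{C}}(Y_i)$. The crux is to compare the inherited filtration $F_n X\cap Y_i$ with the intrinsic $F'_n Y_i$: the holomorphic-pair condition $\frak{p}'_\pm=\frak{p}'_\bb{C}\cap\frak{p}_\pm$ forces $\ad(z')$ and $\ad(z'')$ to coincide on $\frak{g}'_\bb{C}$, so $z'-z''\in\frak{k}_\bb{C}$ centralizes $\frak{g}'_\bb{C}$ in $\frak{g}_\bb{C}$ and hence acts on the irreducible admissible module $Y_i$ by a scalar via Schur's lemma. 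This forces the two filtrations to agree up to a uniform grading shift, producing an injection
\[ \gr_{F'} Y_i \hookrightarrow \gr_F X \]
of graded $K'$-equivariant $S(\frak{p}'_+)$-modules; taking supports yields $\cal{V}_{\frak{g}'_\bb{C}}(Y_i)\subset\pr_{\frak{g}\to\frak{g}'}(\cal{V}_{\frak{g}_\bb{C}}(X))$, which combines with the known inclusion to give the desired equality.

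The main obstacle is the filtration-comparison step. Each $Y_i$ is only $K'$-stable (not $K$-stable, hence not $z'$-stable individually), so \emph{a priori} the inherited filtration $F_n X\cap Y_i$ need not match any intrinsic structure on $Y_i$. The scalar action of $z'-z''$ on $Y_i$, which the holomorphic-pair hypothesis provides via the centralizer argument above, is precisely what bridges the inherited and intrinsic filtrations and allows the embedding of associated graded modules to be set up.
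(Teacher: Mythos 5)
Your proposal is essentially the paper's argument: the paper packages the same idea as Lemma~\ref{lem:compatifilt} applied to $X_0=X^{\frak{p}_-}$, observing that both $U_n(\frak{g}_\bb{C})X_0$ and $U_n(\frak{g}'_\bb{C})X_0$ are cut out by the $\ad(z')$-eigenvalue $\le a_0+n$, which yields the injection of associated graded modules and hence the reverse inclusion of supports. One step of yours needs tightening: Schur's lemma requires $z'-z''$ to \emph{preserve} $Y_i$, which is not automatic when $Y_i$ occurs with multiplicity (a priori $z'-z''$ is only a $\frak{g}'$-endomorphism of $X$, so it preserves isotypic components, not the chosen summands). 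This is fixable: $z'$ and $z''$ are commuting semisimple operators on $X$, so $z'-z''$ is semisimple and one may re-choose the decomposition $X|_{\frak{g}'}=\bigoplus_i Y_i$ so that each $Y_i$ is a $(z'-z'')$-eigenspace inside its isotypic component; this costs nothing since the conclusion depends only on the isomorphism class of $Y$. The paper avoids the issue by working with the single $z'$-graded submodule $U(\frak{g}'_\bb{C})X^{\frak{p}_-}$ and then invoking \cite[Theorem 3.7]{Kob98ii} to pass to an arbitrary irreducible constituent $Y$, a step your summand-by-summand treatment does not need.
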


The key ingredient for the proof is that \eqref{eq:assvar} follows from the `compatibility of filtrations': 
\begin{lem}\label{lem:compatifilt}
Let $X_0$ be a non-zero finite-dimensional subspace of $X$.
Suppose that $U_n(\frak{g}_\bb{C})X_0\cap U(\frak{g}'_\bb{C})X_0=U_n(\frak{g}'_\bb{C})X_0$ for any $n\in \bb{N}$,
 where $U_n(\frak{g}_\bb{C})$ and $U_n(\frak{g}'_\bb{C})$ are given by standard filtrations of $U(\frak{g}_\bb{C})$ and $U(\frak{g}'_\bb{C})$, respectively.
Then the equality~\eqref{eq:assvar} holds.
\end{lem}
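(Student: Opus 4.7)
The plan is to exploit the two filtrations $F_n X := U_n(\frak{g}_\bb{C}) X_0$ on $X$ and $F_n Y := U_n(\frak{g}'_\bb{C}) X_0$ on $Y := U(\frak{g}'_\bb{C}) X_0 \subseteq X$. In the setting of Conjecture~\ref{conj:Kobayashi}, irreducibility of $X$ forces $X = U(\frak{g}_\bb{C}) X_0$, so $F_\bullet X$ is a good filtration of $X$; likewise $F_\bullet Y$ is a good filtration of $Y$. Hence $\gr^F X$ and $\gr^F Y$ are finitely generated modules over $S(\frak{g}_\bb{C})$ and $S(\frak{g}'_\bb{C})$ whose associated varieties in $\frak{g}_\bb{C}^*$ and $(\frak{g}'_\bb{C})^*$ compute $\cal{V}_{\frak{g}_\bb{C}}(X)$ and $\cal{V}_{\frak{g}'_\bb{C}}(Y)$, respectively.

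The compatibility hypothesis $F_n X \cap U(\frak{g}'_\bb{C}) X_0 = F_n Y$ says exactly that the filtration on $Y$ obtained by intersection with $F_\bullet X$ coincides with the intrinsic $F_\bullet Y$. Therefore the inclusion $Y\hookrightarrow X$ is strict with respect to these filtrations, and passage to associated graded modules produces an injection
\begin{equation*}
\gr^F Y \hookrightarrow \gr^F X
\end{equation*}
of graded $S(\frak{g}'_\bb{C})$-modules, where $\gr^F X$ is regarded as an $S(\frak{g}'_\bb{C})$-module via restriction of scalars along the canonical embedding $S(\frak{g}'_\bb{C})\hookrightarrow S(\frak{g}_\bb{C})$. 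In particular, $\Ann_{S(\frak{g}'_\bb{C})}(\gr^F X) \subseteq \Ann_{S(\frak{g}'_\bb{C})}(\gr^F Y)$.

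A standard commutative-algebra translation now closes the argument. One identifies $\Ann_{S(\frak{g}'_\bb{C})}(\gr^F X) = \Ann_{S(\frak{g}_\bb{C})}(\gr^F X) \cap S(\frak{g}'_\bb{C})$, and the vanishing locus in $(\frak{g}'_\bb{C})^*$ of this contracted ideal is the Zariski closure of $\pr_{\frak{g}\to\frak{g}'}(\cal{V}_{\frak{g}_\bb{C}}(X))$. The inclusion of annihilators above therefore gives
\begin{equation*}
\cal{V}_{\frak{g}'_\bb{C}}(Y) \subseteq \overline{\pr_{\frak{g}\to\frak{g}'}(\cal{V}_{\frak{g}_\bb{C}}(X))}.
\end{equation*}
Coupled with the reverse inclusion $\pr_{\frak{g}\to\frak{g}'}(\cal{V}_{\frak{g}_\bb{C}}(X)) \subseteq \cal{V}_{\frak{g}'_\bb{C}}(Y)$ of \cite[Theorem 3.1]{Kob98ii} (whose right-hand side is closed), this produces an equality of closures; since $\cal{V}_{\frak{g}_\bb{C}}(X)$ is a finite union of closures of nilpotent $K_\bb{C}$-orbits in $\frak{p}_\bb{C}$ whose $K'_\bb{C}$-equivariant projections into the closed set $\cal{V}_{\frak{g}'_\bb{C}}(Y)$ decompose into finite unions of $K'_\bb{C}$-orbit closures, the image $\pr_{\frak{g}\to\frak{g}'}(\cal{V}_{\frak{g}_\bb{C}}(X))$ is itself closed, yielding the desired equality~\eqref{eq:assvar}.

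The main obstacle is the passage from filtration compatibility to the injection $\gr^F Y \hookrightarrow \gr^F X$ of graded $S(\frak{g}'_\bb{C})$-modules: this is precisely the role played by the hypothesis. Without it, the induced filtration on $Y$ would properly contain $F_\bullet Y$, the map of associated gradeds would fail to be injective, and control of the annihilator would be lost. Once the injection is in hand, the comparison of associated varieties is a formal manipulation of ideals in polynomial rings.
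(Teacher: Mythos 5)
Your core mechanism --- the injection $\gr X' \hookrightarrow \gr X$ of graded modules induced by the filtration-compatibility hypothesis, followed by the annihilator containment $\Ann_{S(\frak{g}'_\bb{C})}(\gr X')\supset \Ann_{S(\frak{g}_\bb{C})}(\gr X)\cap S(\frak{g}'_\bb{C})$ --- is exactly the paper's argument. But there is a genuine gap at the outset: you \emph{define} $Y:=U(\frak{g}'_\bb{C})X_0$ and then prove the inclusion $\cal{V}_{\frak{g}'_\bb{C}}(Y)\subseteq\overline{\pr_{\frak{g}\to\frak{g}'}(\cal{V}_{\frak{g}_\bb{C}}(X))}$ for \emph{that} module. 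The $Y$ appearing in the equality \eqref{eq:assvar} is the one from Conjecture~\ref{conj:Kobayashi}: an arbitrary irreducible $(\frak{g}',K')$-module with $\Hom_{\frak{g}'}(Y,X)\neq0$. The module $U(\frak{g}'_\bb{C})X_0$ is in general neither irreducible nor of finite length, and the given irreducible $Y$ embeds in $X$ but need not be a submodule of $U(\frak{g}'_\bb{C})X_0$ at all; so your conclusion does not transfer to the $Y$ of the statement without a further argument, and nothing in your write-up supplies it.

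The paper closes this gap as follows: set $X'=U(\frak{g}'_\bb{C})X_0$, run your argument to get $\pr_{\frak{g}\to\frak{g}'}(\cal{V}_{\frak{g}_\bb{C}}(X))\supset\cal{V}_{\frak{g}'_\bb{C}}(X')$, pick any irreducible $\frak{g}'$-submodule $X''\subseteq X'$ (so that $\cal{V}_{\frak{g}'_\bb{C}}(X'')\subseteq\cal{V}_{\frak{g}'_\bb{C}}(X')$), and then invoke \cite[Theorem 3.7]{Kob98ii}, which guarantees that all irreducible $\frak{g}'$-submodules of $X$ have one and the same associated variety; hence $\cal{V}_{\frak{g}'_\bb{C}}(X'')=\cal{V}_{\frak{g}'_\bb{C}}(Y)$ for the $Y$ of the conjecture. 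This citation is the missing ingredient: without it your argument proves a statement about $U(\frak{g}'_\bb{C})X_0$ rather than the lemma. (Your extra discussion of whether the projection $\pr_{\frak{g}\to\frak{g}'}(\cal{V}_{\frak{g}_\bb{C}}(X))$ is closed is more explicit than the paper, which simply asserts the inclusion of varieties from the inclusion of annihilators, but it is secondary to the point above.)
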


\begin{proof}
Since $\pr_{\frak{g}\to\frak{g}'}(\cal{V}_{\frak{g}_{\bb C}}(X)) \subset \cal{V}_{\frak{g}'_{\bb C}}(Y)$  by {\cite[Theorem 3.1]{Kob98ii}}, it suffices to prove the other inclusion $\pr_{\frak{g}\to\frak{g}'}(\cal{V}_{\frak{g}_{\bb C}}(X)) \supset \cal{V}_{\frak{g}'_{\bb C}}(Y)$.
Put $X'=U(\frak{g}'_\bb{C})X_0$.
Define filtrations of $X$ and $X'$ by $X_n=U_n(\frak{g}_\bb{C})X_0$ and $X'_n=U_n(\frak{g}'_\bb{C})X_0$, respectively.  Then our assumption implies that the induced map between graded modules ${\rm gr}\, X'\to {\rm gr}\, X$ is injective.
Hence $\Ann_{S(\frak{g}'_\bb{C})}({\rm gr}\, X')\supset \Ann_{S(\frak{g}_\bb{C})}({\rm gr}\, X)\cap S(\frak{g}'_\bb{C})$ and the inclusion $\pr_{\frak{g}\to\frak{g}'}(\cal{V}_{\frak{g}_{\bb C}}(X)) \supset \cal{V}_{\frak{g}'_{\bb C}}(X')$ holds.
Therefore, if $X''$ is an irreducible $\frak{g}'$-submodule of $X'$, we have $\pr_{\frak{g}\to\frak{g}'}(\cal{V}_{\frak{g}_{\bb C}}(X)) \supset \cal{V}_{\frak{g}'_{\bb C}}(X'')=\cal{V}_{\frak{g}'_{\bb C}}(Y)$ by \cite[Theorem 3.7]{Kob98ii}.
\end{proof}

\begin{proof}[Proof of Theorem~\ref{thm:assvar}]
Suppose that $X$ is an irreducible lowest weight module.
For $a\in \bb{C}$, write $X(a):=\{v\in X:z'v=av\}$, the $z'$-eigenspace with eigenvalue $a$. Then since $\ad(z')$ is $1$ on $\frak{p}_+$ we have $\frak{p}_+X(a)=X(a+1)$.
Put $X_0:=X^{\frak{p}_-}$ and $X_n=U_n(\frak{g}_\bb{C})X_0$ for $n\in \bb{N}$.
Since $X$ is irreducible, $\ad(z')$ acts on $X^{\frak{p}_-}$ by a scalar, say $a_0$. Then we have
\[X_n=U_n(\frak{g}_\bb{C})X^{\frak{p}_-}=U_n(\frak{p}_+)X^{\frak{p}_-}=\bigoplus_{k=0}^n X(a_0+k).\]
Similarly, 
\[U_n(\frak{g}'_\bb{C})X^{\frak{p}_-}=U_n(\frak{p}'_+)X^{\frak{p}_-}=\bigoplus_{k=0}^n X(a_0+k)\cap U(\frak{g}'_\bb{C})X^{\frak{p}_-}.\]
Therefore, $U_n(\frak{g}_\bb{C})X_0\cap U(\frak{g}'_\bb{C})X_0=\bigoplus_{k=0}^n X(a_0+k) \cap U(\frak{g}'_\bb{C})X_0 = U_n(\frak{g}'_\bb{C})X_0$.
Hence \eqref{eq:assvar} follows from Lemma~\ref{lem:compatifilt}.
\end{proof}

\subsection{The conjecture for non-holomorphic pairs}

We next consider the setting in Section~\ref{sec:nonhol}. In particular, $X=L^{\frak{g}}(c\zeta)$ is the minimal holomorphic representation.
Put $X_0:=X^{\frak{p}_-}$ and $X_n:=U_n(\frak{g}_\bb{C})X_0$.
Then $X_n=\bigoplus_{k=0}^n F^{\frak{k}}(c\zeta+k\beta)$.
In view of Proposition~\ref{prop:irred} and its proof, we inductively get $U_n(\frak{g}'_\bb{C})X_0=\bigoplus_{k=0}^n F^{\frak{k}}(c\zeta+k\beta)$.  Hence the assumption in Lemma~\ref{lem:compatifilt} is satisfied.  We thus obtain:
\begin{thm}
Suppose that $(\frak{g},\frak{g}^\sigma)$ is a non-holomorphic symmetric pair, $X=L^{\frak{g}}(c\zeta)$ is the minimal holomorphic representation, and $X|_{\frak{g}^\sigma}$ is discretely decomposable.
Then Conjecture~\ref{conj:Kobayashi} is true.
\end{thm}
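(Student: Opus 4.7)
The plan is to verify the hypothesis of Lemma~\ref{lem:compatifilt} with the minimal $K$-type as the generating subspace. Specifically, I take $X_0 := X^{\frak{p}_-} = F^{\frak{k}}(c\zeta)$. On the $\frak{g}$-side the filtration is computed immediately from the $K$-type decomposition \eqref{eq:Ktype}: since $\frak{p}_-$ annihilates $X_0$, $\frak{k}$ preserves $K$-types, and $\frak{p}_+$ shifts the $\ad(z')$-eigenvalue up by one, PBW yields
\[
U_n(\frak{g}_\bb{C})X_0 \;=\; U_n(\frak{p}_+)X_0 \;=\; \bigoplus_{k=0}^n F^{\frak{k}}(c\zeta+k\beta).
\]
The substance of the proof is to establish the analogue $U_n(\frak{g}^\sigma_\bb{C})X_0 = \bigoplus_{k=0}^n F^{\frak{k}}(c\zeta+k\beta)$ on the $\frak{g}^\sigma$-side; once this is in hand, the intersection condition of Lemma~\ref{lem:compatifilt} is automatic and the theorem follows.

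I would prove this by induction on $n$, with the case $n=0$ trivial. For the inductive step, assume the formula for $n$ and consider $\frak{p}^\sigma \cdot F^{\frak{k}}(c\zeta+n\beta) \subseteq U_{n+1}(\frak{g}^\sigma_\bb{C})X_0$, which lies in $F^{\frak{k}}(c\zeta+(n-1)\beta) \oplus F^{\frak{k}}(c\zeta+(n+1)\beta)$ by the $\ad(z')$-eigenvalue shift. Introduce
\[
S := \{\, Y \in \frak{p}_+ : Y \cdot F^{\frak{k}}(c\zeta+n\beta) \text{ has zero component in } F^{\frak{k}}(c\zeta+(n+1)\beta)\,\}.
\]
A routine commutator computation shows that $S$ is a $\frak{k}$-submodule of $\frak{p}_+$, and since $\frak{p}_+$ is $\frak{k}$-irreducible while \eqref{eq:Ktype} together with the irreducibility of $X$ force $\frak{p}_+ \cdot F^{\frak{k}}(c\zeta+n\beta) \supseteq F^{\frak{k}}(c\zeta+(n+1)\beta)$, one has $S = 0$. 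If the projection of $\frak{p}^\sigma \cdot F^{\frak{k}}(c\zeta+n\beta)$ to $F^{\frak{k}}(c\zeta+(n+1)\beta)$ were zero, the image of $\frak{p}^\sigma$ under the projection $\frak{p}_\bb{C} \to \frak{p}_+$ would lie in $S$; but because $\frak{p}^\sigma$ generates $\frak{p}$ as a $\frak{k}$-module (the observation already exploited in the proof of Proposition~\ref{prop:irred}), the $\frak{k}$-span of this image equals $\frak{p}_+$, yielding the contradiction $\frak{p}_+ \subseteq S = 0$. Hence the projection is nonzero, and being a $\frak{k}^\sigma$-equivariant map into the $\frak{k}^\sigma$-irreducible module $F^{\frak{k}}(c\zeta+(n+1)\beta)$ (shown in Section~\ref{sec:nonholirred}), it is surjective; combined with the inductive hypothesis and the trivial inclusion $U_{n+1}(\frak{g}^\sigma_\bb{C})X_0 \subseteq U_{n+1}(\frak{g}_\bb{C})X_0$, this closes the induction.

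The argument is essentially elementary once both inputs from Section~\ref{sec:nonholirred} are available, namely the $\frak{k}^\sigma$-irreducibility of each $K$-type and the generation of $\frak{p}$ by $\frak{p}^\sigma$ as a $\frak{k}$-module. The only delicate point I anticipate is the non-vanishing argument above, whose crux is precisely the interplay between these two facts: the $\frak{k}$-irreducibility of $\frak{p}_+$ is used to make $S$ an all-or-nothing object, and the $\frak{k}$-generation of $\frak{p}$ by $\frak{p}^\sigma$ is what transfers information from the $\frak{g}^\sigma$-side back to the $\frak{g}$-side.
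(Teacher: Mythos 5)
Your proof is correct and takes essentially the same route as the paper: both verify the hypothesis of Lemma~\ref{lem:compatifilt} with $X_0=X^{\frak{p}_-}$ and establish $U_n(\frak{g}^\sigma_\bb{C})X_0=\bigoplus_{k=0}^n F^{\frak{k}}(c\zeta+k\beta)$ by induction, using exactly the two inputs from Proposition~\ref{prop:irred} and its proof (the $\frak{k}^\sigma$-irreducibility and pairwise non-isomorphy of the $K$-types, and the fact that $\frak{p}^\sigma$ generates $\frak{p}$ as a $\frak{k}$-module). The paper compresses the inductive step into one sentence; your argument with the $\frak{k}$-submodule $S\subseteq\frak{p}_+$ is a correct way of spelling it out.
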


\bigskip

\end{document}